\newcommand{\bg}{\raisebox{1.7pt}{$\gamma$\hspace{-4.5pt}\raisebox{-6.5pt}{\textcolor{white}{$\blacksquare$}}} \hspace{-18.25pt} $\gamma$}
\def\bgamma{\textstyle{\textup{\bg}}}
\def\hgamma{\textstyle{\textup{$\gamma$}}}
\definecolor{lien}{rgb}{0.7,0,0}
\definecolor{citation}{rgb}{0,0.9,0}
\theoremstyle{plain}
\newtheorem{thm}{Theorem}
\newtheorem{prop}[thm]{Proposition}
\newtheorem{lemme}[thm]{Lemma}
\newtheorem{coro}[thm]{Corollary}
\theoremstyle{definition}
\newtheorem{defn}[thm]{Definition}
\def\N{\mathbb{N}}
\def\R{\mathbb{R}}
\def\Z{\mathbb{Z}}
\def\C{\mathbb{C}}
\def\M{\mathbb{M}}
\def\E{\mathbb{E}}
\def\dis{\displaystyle}
\def\Sph{\mathbb{S}}
\def\dis{\displaystyle}
\def\rel{\mathcal{R}}
\def\im{\mathcal{I}}
\def\is{\mathcal{I \!\! s}}
\def\dj{\partial_j}
\def\d{\mathrm{d}}
\def\dist{dist}
\def\n{\mathrm{\textbf{n}}}
\def\t{\mathrm{\textbf{t}}}
\def\Lcal{\mathcal{L}}
\def\Rcal{\mathcal{R}}
\def\Mcal{\mathcal{M}}
\def\noi{\noindent}
\newcommand{\bCP}{CP_{\bgamma}}
\newcommand{\CP}{CP_{\hgamma}}
\newcommand{\s}{\mathfrak{S}}
\newcommand{\iconv}{\mathrm{IntConv}}
\title{Convex Integration Theory without Integration}
\date{September 5, 2019}
\author{Mélanie Theillière\footnote{\textsc{Université Claude Bernard Lyon 1, Institut Camille Jordan, Lyon, France}}}
\begin{document}

\maketitle

\begin{abstract}
We replace the usual Convex Integration formula by a {\it Corrugation Process} and introduce the notion of {\it Kuiper differential relations}. This notion provides a natural framework for the construction of solutions with self-similarity properties. We consider the case of the totally real relation, we prove that it is Kuiper and we state a totally real isometric embedding theorem. We then show that the totally real isometric embeddings obtained by the Corrugation Process exhibits a self-similarity property. Kuiper relations also enable a uniform expression of the Corrugation Process that no longer involves integrals. This expression generalizes the ansatz used in \cite{ContiEtAl} to generate isometric maps. We apply it to build a new explicit immersion of $\R P^2$ inside~$\R^3$.
\end{abstract}

\section*{General introduction}

\textbf{Context and motivation.--} Convex Integration is a theory developed by Gromov in the 70s and 80s to solve large families of differential relations and to describe the topology of the space of their solutions~\cite{Gromov-book}. It is inspired by the works of Nash and Smale~\cite{Nash54, Smale} and is commonly used in Riemannian, symplectic or contact geometry to determine the presence of a $h$-principle~\cite{Spring-book, EliaMisha-book, Geiges}. Currently, the theory is experiencing a renewed interest due to the discovery of its applicability in Fluid Mechanics~\cite{Lellis-Szeke,CdLdR,BV19} and in Contact Geometry~\cite{Murphy18}.\\

\noi
This article is motivated by the fractal behavior observed recently in the construction of two explicit $C^1$ isometric embeddings by using Convex Integration Theory: a square flat torus inside the Euclidean 3-space and a unit sphere inside a small ball~\cite{PNAS, BBDLRT}. These embeddings were obtained by stacking iteratively corrugations from an initial map.
In both cases, the resulting sequence of maps converges toward an isometric embedding whose differentials satisfy self-similarity properties. In~\cite{Gromov2017}, Gromov points out that such self-similarity properties could be used to possibly rigidify the $C^1$ theory of isometric maps. \\

\noi
\textbf{Three conditions for self-similarity.--} The self-similarity obtained for the torus or the sphere results from three specific choices that are maintained throughout the construction process:\\
\hspace*{3mm} $i)$ all the corrugations share the same pattern, \\
\hspace*{3mm} $ii)$ the directions of corrugation are periodically repeated,\\
\hspace*{3mm} $iii)$ unpleasant gluings between local constructions are avoided.\\
Since none of these points follows from the Convex Integration Theory, it is unclear that such construction choices could be made for arbitrary manifolds or arbitrary initial maps. This raises the question whether they could be included in the Convex Integration Theory so that to ensure self-similarity properties of the differential of generated maps. \\

\noi
\textbf{Substituting the Convex Integration formula.--}
Convex Integration Theory builds solutions of an order one differential relation $\rel$ by applying iteratively a Convex Integration process to a formal solution $\s$ of this relation. Each Convex Integration process generates a map which is defined via an integral formula whose integrand is expressed in terms of a loop family~$\gamma$ of $\rel$ chosen accordingly to $\s$. In this article, we propose to replace the Convex Integration formula by a new formula, that we call {\it Corrugation Process} (see Equation~\eqref{formula_CP}), which improves the previous one on two points: it is local and it preserves periodicity. These two properties allow to avoid many of the unpleasant gluings mentionned in point $iii)$.\\

\noi
\textbf{Generating similar corrugations.--} To deal with point $i)$, we then introduce the notion of Kuiper relations which, informally, are the differential relations that can be solved by loop families built on a pre-defined pattern. We prove that the relation of codimension one immersions and the relation of totally real immersions are Kuiper (see Theorem~\ref{th1}). We show that for Kuiper relations the contribution of the two main ingredients of the Corrugation Process -- namely the formal solution $\s$ and the loop family $\gamma$ -- can be separated and clearly interpreted. As a consequence, the geometric effect of a Corrugation Process can be precisely described: it generates corrugations whose shapes are induced by the pre-defined pattern. \\

\noi
\textbf{Convex Integration without integration.--} An unexpected outcome of this separation is that, in the case of a Kuiper relation, the Corrugation Process no longer involves any integration on $\s$ (see Proposition~\ref{prop_noInt}). This allows to build solutions with short and simple analytic expressions.   As an illustration, we build a new explicit immersion of $\R P^2$ inside $\R^3$ by a direct application of a Corrugation Process (see Section~\ref{sectionApplImmersion}).\\ 

\noi
\textbf{Self-similarities.--} The notion of Kuiper relation offers a natural framework for studying self-similarities that emerge in Convex Integration Theory. 
We first observe by considering the totally real relation that a kind of self-similarity can derive from points $i)$ and $iii)$. Precisely, we state a $C^1$ isometric immersion theorem for totally real map (see Theorem~\ref{th2}), first by proving that the totally real relation is Kuiper and then by using the Corrugation Process throughout the Nash-Kuiper iterative construction of an isometric map. We show that the Maslov component of its Gauss map bears some resemblance with a Weierstrass function (see Proposition~\ref{prop_maslov}).\\

\noi
A geometric self-similarity behavior is also revealed when we take into account point $ii)$. Indeed, the construction of a square flat torus inside the Euclidean 3-space or of a unit sphere inside a small ball can be done as in the articles~\cite{PNAS, BBDLRT} by replacing the Convex Integration formula by the Corrugation Process repeated in always the same set of three directions. As a consequence, the resulting isometric embeddings will share similar properties than the one described by the \emph{Riesz Asymptotic Behavior Theorem } of \cite{ensaios}: at every point of the embedding, the surface looks the same in the sense that the normal vector is asymptotically obtained as a infinite product of rotations associated to the same set of three directions (Proposition~\ref{prop:fin}).

\section*{Presentation of the results}

\noi
\textbf{Framework.--} In its traditional framework, Convex Integration considers sections $f\in \Gamma^{\infty}(X)$ of a fiber bundle $p:X\rightarrow M$ and encodes any differential constraint of order $r\geq 1$ as a subset $\rel\subset J^{r}(p)$ of the $r$-jet manifold of $p$. A section $\s$ of $p_r:J^r(p)\rightarrow M$ whose image lies inside $\rel$ is called a formal solution and its composition by the natural projection $p_{r,0}:J^r(p)\rightarrow X$ gives a section $f_0 = p_{r,0}\circ\s$ of $p$ called the base section. A solution of $\rel$ is a section of $p$ whose $r$-jet is a section of $p_r:\rel\rightarrow M$.\\ 

\noi
\textbf{Convex Integration Formula.--} In the case where $r=1$ and $\rel$ is both open and satisfies some convexity assumption, the theory formally builds solutions from a formal solution $\s$ of the differential relation $\rel$. This construction is carried out iteratively on charts diffeomorphic to the cube $[0,1]^m$, $m=\dim M$, and with boundary conditions. The solutions, as well as the modified formal solution, are then glued together following a process described in the proof of Theorem 4.2 of~\cite{Spring-book}. Over each cube, the base section of $\s$ reads as a map $f_0: [0,1]^m\rightarrow \R^n$  where $m$ and $n$ are the dimensions of $M$ and of the fiber of $p.$ A sequence of (at most) $m$ Convex Integrations is applied to modify $f_0$ iteratively on every coordinate direction $\partial_1,\ldots ,\partial_m$. Each of these Convex Integrations builds from the given map $f_0$ and a direction, for instance $\partial_1$, a new map $F_1$ defined by the following formula 
\begin{eqnarray}\label{eq_IC_formula}
F_1(x):= f_0(0,x_2,\ldots ,x_m)+ \int_{s=0}^{x_1} \gamma(s, x_2,\ldots,x_m , Ns) \ ds
\end{eqnarray}
where $x\in [0,1]^m$, $N\geq 1$ is any natural number and $\gamma:[0,1]^m\times\R/\Z\rightarrow \R^n$ is a family of loops. The new map $F_1$ has its $\partial_1$-derivative lying inside the image of $\gamma$. Since the ultimate goal is to solve $\rel$, it is required that this image is contained in some specific subspace depending on $\rel$ (see Lemma~\ref{gamma_dans_R}). An Average Constraint 
$$(AC) \hspace*{5mm} \int_0^1 \gamma(x,s)d s =\partial_1 f_0(x)$$
is also required to control the perturbation of the derivatives in the other directions. Finally, $\gamma$ is also chosen to be a round trip so that its image retracts onto its base point. This plays a crucial role when gluing or homotopying a local holonomic solution to match with the global formal solution. The existence and the construction of such a family of loops lay the foundations for Convex Integration Theory~(see~\cite{Spring-book}).\\

\noi
\textbf{The Corrugation Process.--} In this article, we replace the Convex Integration Formula by a {\it Corrugation Process} defined as follows. From a map $f_0:[0,1]^m\rightarrow\R^n$ we build a new map $f_1$ by setting
\begin{eqnarray*}
f_1(x):=f_0(x) + \frac{1}{N}\int_{s=0}^{Nx_1} \gamma(x,s)-\overline{\gamma}(x) ds 
\end{eqnarray*}
where $\overline{\gamma}(x)$ denotes the average of $t\mapsto \gamma(x,t).$  We then say that $f_1$ is obtained from $f_0$ by a Corrugation Process. This formula involves the same ingredients as the usual Convex Integration but space and time variables are separated and the integral relates only on the time variable. It ensues the following local property: the value of $f_1$ at $x$ only depends on the values of $f_0$ and of $\gamma$ at $x$. As a consequence, the map obtained by the Corrugation Process preserves the 1-periodicity, a property that allows a straightforward gluing on overlapping charts. In Section~\ref{section-CP}, we explore the properties of  the Corrugation Process and show that the whole Convex Integration Theory can be developed by using the Corrugation Process (see Proposition~\ref{propCP}). Note that, by a different approach, Eliashberg and Mishachev obtain a formula sharing the above local property~\cite{EliaMisha-book}. Nevertheless, their construction depends on a specific choice of paths (called {\it flowers}) and is less amenable to the uniform construction described in the next paragraphs.\\

\noi
\textbf{Kuiper relations.--} In the Convex Integration Theory, the family of loops $\gamma$ is built once the formal solution $\s$ is given. Indeed, three conditions are required on $\gamma$ and all of them involve $\s$: the average $\overline{\gamma}(x)$ should be some derivative of $f_0={\rm bs}\;\s$ at $x$, the base point of the loop $t\mapsto\gamma(x,t)$ should be $\s(x)$ and its image should lie inside a subset of $\rel$ depending on $\s(x)$. 
As a consequence, the exact contributions of the formal solution $\s$ and of the chosen family of loops $\gamma$ in the geometry of the resulting map is unclear. We propose to unravel the situation by constructing a larger family of loops~$\bgamma(\sigma,w)$ depending continuously on two parameters: its base point $\sigma\in \rel$ and its average $w.$ If such a construction is feasible, we say that the differential relation $\rel$ is {\it surrounding} (see Definition~\ref{def_surrounding}). In that case, the role of the family of loops and of the formal solution can be rigidly separated by defining $\gamma$ from the larger family~$\bgamma$:
$$\gamma(x,.):=\bgamma(\s(x),\partial_1 f_0(x)).$$ 
To have a complete understanding of $\gamma$ it then remains to determine the geometry of $\bgamma$. This can be done by building the surrounding family $\bgamma$ from a pre-defined loop pattern, i.e. a continuous family of loops $c:A\times\R/\Z\rightarrow\R^p$ with $A\subset\R^q$, which are used as models. We say that the surrounding family $\bgamma$ is {\it $c$-shaped} if every loop $t\mapsto \bgamma(\sigma,w)(t)$ is, in a uniform way, the linear image of a loop $t\mapsto c(a,t)$ from some $a={\bf a}(\sigma,w)$ (see Definition~\ref{def_gamma_c-shaped}). We call {\it Kuiper relation} a surrounding differential relation $\rel$ which admits such a $c$-shaped family (see Definition~\ref{def_R_Kuiper}).\\

\noi
\textbf{Eliminating integrals in the Corrugation Process formula.--} For a Kuiper relation, the analytic expression of the Corrugation Process does not include any integration of the formal solution. The only functions which are integrated are the coordinates $c_i$ of $c$ which are independent of $\s$. We denote by $C_i(a,.)$ the 1-periodic function defined by
$$t\mapsto C_i(a,t)=\int_0^tc_i(a,t)-\overline{c_i}(a)dt.$$  We show that the Corrugation Process writes
\begin{eqnarray*}
f_1(x) &=&  f_0(x) + \frac{1}{N} \sum_{i=1}^pC_i( a(x),Nx_1 )e_i(x)
\end{eqnarray*}
where $a(x) := \textbf{a}(\s(x), \partial_1 f_0(x))\in\R^q$, $e_i(x) := \textbf{e}_i(\s(x), \partial_1 f_0(x))\in\R^p$ for some maps ${\bf a}$ and~${\bf e}_i$ (see Proposition~\ref{prop_noInt}). This formula provides a clear interpretation of the Corrugation Process: the map $f_1$ is obtained from $f_0$ by adding perturbations along $p$ directions $e_1,\ldots,e_p$ which only depends on the initial formal solution $\s$. It matches with the integral free formulas used by Nash and Kuiper to iteratively modify a short embedding into a $C^1$-isometric embedding in~\cite{Nash54, Kuiper55}.  It also establishes the link between the Convex Integration process and formulas commonly found in Analysis (see Equation $(20)$ of~\cite{ContiEtAl} or the ansatz p.23 of~\cite{wasem} for instance). Mostly, it allows a direct approach of the Convex Integration Theory with a ready-made formula whose geometric effect, encoded by a single pattern, can be controlled (see Lemmas \ref{lemme_Jdensity1}, \ref{lemme_Jdensity2} or \ref{lemme_Maslov_angle} for examples of such a control).\\

\noi
\textbf{Main result.--} In \cite{LevyThurston}, Thurston introduces the Theory of Corrugation and uses it to recover many results in Immersion Theory including the Whitney-Graustein Theorem and some explicit constructions of sphere eversions. In this theory, the fundamental process is to perform normal deformations along curves by adding eight-shaped arcs. It is readily seen that these deformations coincide with a Corrugation Process (hence the name) with pattern an arc of parabola. It turns out that another simple pattern --an arc of circle-- allows to solve not only the immersion relation, but also the immersion of $\epsilon$-isometric maps and the totally real relation. Precisely, let $c:[0,\alpha_0]\times\R/\Z\rightarrow\R^3$ be the pattern defined by 
\begin{eqnarray}\label{pattern_arc_de_cercle}
c(\alpha, t)=\Big(\cos(\alpha \cos 2\pi t)- J_0(\alpha), \sin(\alpha \cos 2\pi t),1\Big)
\end{eqnarray}
with $J_0$ the Bessel function 
$J_0(\alpha) := \int_0^1 \cos(\alpha\cos 2\pi t)dt$
and $\alpha_0\approx 2.4$ its first positive root. The curve $t\mapsto c(\alpha,t)$ is an horizontal arc of circle of amplitude $2\alpha$ and of center $(-J_0(\alpha),0,1).$

\begin{thm}\label{th1}
The following relations are relative Kuiper relations with respect to $c$:
\begin{enumerate}
\item The immersion relation $\im(M,W)$ with $M$ and $W$ orientable and $\dim\; W=\dim\; M+1$. 
\item The totally real relation $\im_{TR}(M,W)$ where $(W,J)$ is an almost complex manifold with $\dim\; W=2\dim \;M$.
\end{enumerate}
\end{thm}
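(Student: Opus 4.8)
\noi
The plan is to unwind the three definitions in play --- surrounding relation (Definition~\ref{def_surrounding}), $c$-shaped surrounding family (Definition~\ref{def_gamma_c-shaped}) and Kuiper relation (Definition~\ref{def_R_Kuiper}) --- and to reduce the statement to a single, essentially linear-algebraic, construction that runs in parallel for the two relations. Fix a coordinate direction of corrugation $\tau$ and a formal solution $\sigma$, and freeze all the partial derivatives prescribed by $\sigma$ except the one along $\tau$. In both cases the set of admissible values for that last derivative is the complement $\R^n \setminus K_\sigma$ of a \emph{linear} subspace $K_\sigma$ depending on $\sigma$: for the codimension one immersion relation, $K_\sigma$ is the hyperplane spanned by the remaining derivatives; for the totally real relation, $K_\sigma = V_\sigma \oplus J V_\sigma$, where $V_\sigma$ is the span of the remaining derivatives. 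So the theorem amounts to (i) checking the surrounding condition for the relevant connected component of $\R^n \setminus K_\sigma$, and (ii) exhibiting a $c$-shaped surrounding family of loops valued in that component, with all the choices depending smoothly on $(\sigma, w)$ (here $w$ is the prescribed average in the notation $\bgamma(\sigma, w)$) and degenerating to the constant loop on the locus where $\sigma$ is already holonomic along $\tau$ --- the last point being what the word \emph{relative} refers to.

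\noi
I would first dispatch the linear algebra. For codimension one immersions, $K_\sigma$ is a hyperplane, $\R^n \setminus K_\sigma$ is the union of two convex open half-spaces, and the surrounding condition reduces to asking that $w$ lie in the half-space selected by $\sigma$; this is immediate once the orientability of $M$ and $W$ is used to co-orient $K_\sigma$ in a way consistent over all of $\rel$. For the totally real relation, the crucial point is that $K_\sigma = V_\sigma \oplus J V_\sigma$ is a \emph{complex} subspace: from $J^2 = -\mathrm{id}$ one gets $J K_\sigma = K_\sigma$, so $K_\sigma$ has real codimension $2$ and $\R^n \setminus K_\sigma$ is connected. The clean fact to establish is that, the other derivatives being fixed and totally real, replacing the $\tau$-derivative by a vector $v$ keeps the datum a totally real injection if and only if $v \notin K_\sigma$: the ``only if'' is clear, and for ``if'' one notes that $v \notin K_\sigma$ together with $J v \in K_\sigma + \R v$ would force, after applying $J$ and using $J K_\sigma = K_\sigma$, that $(1 + \lambda^2) v \in K_\sigma$ for some real $\lambda$, a contradiction. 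Combined with $\mathrm{Conv}(\R^n \setminus K_\sigma) = \R^n$, this yields the surrounding condition of Definition~\ref{def_surrounding}, with $w$ ranging off $K_\sigma$.

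\noi
The heart of the matter is step (ii): realising these loops as linear images of the single arc-of-circle pattern $c$ of~\eqref{pattern_arc_de_cercle}. Here I would use that $c$ has \emph{constant} average $(0,0,1)$ for every amplitude $\alpha$ --- precisely what subtracting $J_0(\alpha) = \int_0^1 \cos(\alpha \cos 2\pi t)\, dt$ achieves --- and that $c(0, \cdot)$ is the constant loop $(0,0,1)$. It therefore suffices to attach to each admissible $(\sigma, w)$ an amplitude $\mathbf a(\sigma, w) \ge 0$ and a linear map $L_{\sigma, w} \colon \R^3 \to \R^n$ with columns $(\mathbf e_1(\sigma, w), \mathbf e_2(\sigma, w), w)$, and to set $\bgamma(\sigma, w)(t) := L_{\sigma, w}\bigl(c(\mathbf a(\sigma, w), t)\bigr)$. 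The average of this loop is $L_{\sigma, w}(0,0,1) = w$ automatically; it is a round trip because $t \mapsto \alpha \cos 2\pi t$ is; and its image avoids $K_\sigma$ once $\mathbf e_1, \mathbf e_2$ are chosen so that the plane they span is in suitable general position with respect to $K_\sigma$ --- one may take $\mathbf e_1 \in K_\sigma$ and $\mathbf e_2$ a small vector transverse to $K_\sigma$ (of norm $\ll \mathrm{dist}(w, K_\sigma)$), which for the immersion relation also keeps the loop on the side selected by $\sigma$; when $\dim K_\sigma \ge 2$ one may even take both $\mathbf e_1, \mathbf e_2 \in K_\sigma$, so the loop lies exactly in the coset $w + K_\sigma$. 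The underlying elementary fact, which is why one fixed pattern works for both relations, is that for $\alpha > 0$ the recentred arc $\phi \mapsto (\cos\phi - J_0(\alpha), \sin\phi)$ never passes through the origin, since $|J_0(\alpha)| < 1$. The maps $\mathbf a, \mathbf e_1, \mathbf e_2$ are chosen smoothly in $(\sigma, w)$ after trivialising the ambient bundles via the orientability hypotheses (and co-orienting $K_\sigma$ in the immersion case); taking $\mathbf a = 0$ wherever $\sigma$ is holonomic along $\tau$ collapses $\bgamma$ to the constant loop there and makes the family relative. In the notation of Proposition~\ref{prop_noInt} the pattern satisfies $p = 3$ and $C_3 \equiv 0$, so the Corrugation Process attached to $c$ reduces to adding $\tfrac1N\bigl(C_1(\mathbf a(x), N x_1)\, \mathbf e_1(x) + C_2(\mathbf a(x), N x_1)\, \mathbf e_2(x)\bigr)$.

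\noi
I expect the main obstacle to be organisational rather than conceptual: carrying out the choice of $\mathbf a$ and of the vectors $\mathbf e_1, \mathbf e_2$ continuously --- indeed smoothly --- over the whole of $\rel$ while respecting the relative and boundary conditions, and pinning down the $J$-linear algebra of the totally real case (identifying $K_\sigma$ as a complex hyperplane and proving the ``if'' direction above). The geometric content --- corrugating along a plane inside, or nearly inside, the forbidden subspace, at a fixed distance off it --- is a mild variant of Thurston's normal corrugations~\cite{LevyThurston} and of the iterative constructions of~\cite{PNAS, BBDLRT}, with the arc of a circle replacing the arc of a parabola; the arc's one relevant feature, that it avoids its own centre, is exactly what lets it serve uniformly for the immersion relation and the totally real relation.
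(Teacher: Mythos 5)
Your reduction of both relations to avoiding a linear subspace $K_\sigma$, and your totally real linear algebra ($K_\sigma=V_\sigma\oplus JV_\sigma$ is $J$-invariant of real codimension $2$, and $v\notin K_\sigma$ already forces $L_v$ totally real), are correct and agree with the paper. But your description of the codimension-one immersion slice is wrong: fixing the derivatives along $\ker d\pi$ leaves only $m-1$ vectors, so the forbidden set is $P=L(\ker d\pi_x)$, a subspace of codimension $2$ in $T_yW$, not a hyperplane. The slice $\im(\sigma,d\pi_x,u_x)$ is the connected complement of $P$, its convex hull is all of $T_yW$, and that ampleness is exactly why every formal solution is a subsolution; there is no ``half-space selected by $\sigma$'' (the relation only asks rank $m$), and the orientability hypotheses are used to choose the unit normal $\nu$ to $L(T_xM)$ coherently (one of the two directions spanning the plane of the loop), not to co-orient a hyperplane.

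The more serious gap is in your construction of the loops. Since $\iconv\,\im(\sigma,d\pi_x,u_x)=T_yW$, a surrounding family in the sense of Definition~\ref{def_surrounding} must be defined for \emph{every} $w\in T_yW$, in particular for $w$ lying on or arbitrarily near $K_\sigma$ --- which is precisely the case the theorem is used for (e.g.\ $w=\partial_2 f_0=0$ at the pinch points of the Pl\"ucker conoid in Section~\ref{sectionApplImmersion}). Your choices $\mathbf{e}_1\in K_\sigma$ with $\|\mathbf{e}_2\|\ll\mathrm{dist}(w,K_\sigma)$, or both $\mathbf{e}_1,\mathbf{e}_2\in K_\sigma$, break down exactly there: when $w\in K_\sigma$ a loop of average $w$ avoiding $K_\sigma$ must project to the $2$-dimensional quotient $T_yW/K_\sigma$ as a loop winding around the origin at positive radius, which is incompatible with confining it to $w+K_\sigma$ or to an $o(\mathrm{dist}(w,K_\sigma))$-neighbourhood of it. The paper's construction has the opposite geometry: the arc lies in the $2$-plane spanned by $L(u)$ and $\nu$ (resp.\ $JL(u)$ in the totally real case), transverse to $P$, on a circle centered at $w$ of radius $\mathbf{r}>\mathrm{dist}(w,[w]^P)$, so the circle encircles $[w]^P$ and conditions (1)--(2) hold for all $w$ without exception. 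Finally, you do not address point (3) of Definition~\ref{def_surrounding} (a homotopy of base points to $L(u_x)$, continuous over all of $\iconv(\im,d\pi,u)$ --- ``round trip'' is not that), nor the verification that the interpolated amplitude used for relativity keeps condition (1) near the diagonal $w=L(u)$; these two checks (the sets $Z_\Delta$, $Z_1(\epsilon)$, $Z_0$ and the homotopies $h_{1/4}$, $h_{\alpha_0}$, $h$) occupy most of the paper's proof in Section~\ref{codim1}.
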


\noi
Recall that a map $f:M\rightarrow W$ is an immersion if for every $x\in M$,  $\mbox{rank}\;df_x$ is maximal and that $f:M\rightarrow (W,J)$ is totally real if $df(T_xM)\oplus Jdf(T_xM)=T_{f(x)}W$. In the particular case where $df(T_xM)$ is orthogonal to $Jdf(T_xM)$, the map is called Lagrangian. Totally real maps occur in Symplectic Geometry as a flexible version of Lagrangian maps and in the Theory of functions of several complex variables to build domains of holomorphy~\cite{Gromov-book}.\\

\noi
A direct consequence of this theorem is that these different relations are solved by using a similar formula. In the case $W=\R^n$, it writes:
\begin{eqnarray*}
f_1(x) = f_0(x) + \frac{1}{N}K_c(\alpha(x), Nx_1) \cdot e_1(x) + \frac{1}{N} K_s(\alpha(x), Nx_1) \cdot e_2(x)
\end{eqnarray*}
with
\begin{eqnarray*}
\begin{array}{lll}
  K_c(\alpha,t) & := & \dis\int_{u=0}^t \Big( \cos(\alpha \cos 2\pi u)-J_0(\alpha)\Big)du\vspace*{2mm}\\
  K_s(\alpha,t) & := & \dis\int_{u=0}^{t} \sin(\alpha \cos 2\pi u ) du
\end{array}
\end{eqnarray*}
and where the angle function $\alpha$ and the two vector fields $e_1$ and $e_2$ are defined in Section~\ref{proof_th1}. This formula is completely similar to the {\it ansatz} used by Conti, De Lellis and Székelyhidi to construct isometric embeddings and to study their $C^{1,\alpha}$-regularity~\cite{ContiEtAl}, \cite{wasem}. In that last case, the underlying differential relation is the one of $\epsilon$-isometric maps (see Section~\ref{Nash-Kuiper-historique}). It is quite remarkable that the same pattern allows to solve three distinct differential relations. We derive below two applications of the Main Theorem.\\

\begin{figure}[!ht]
\centering
\includegraphics[scale=0.22]{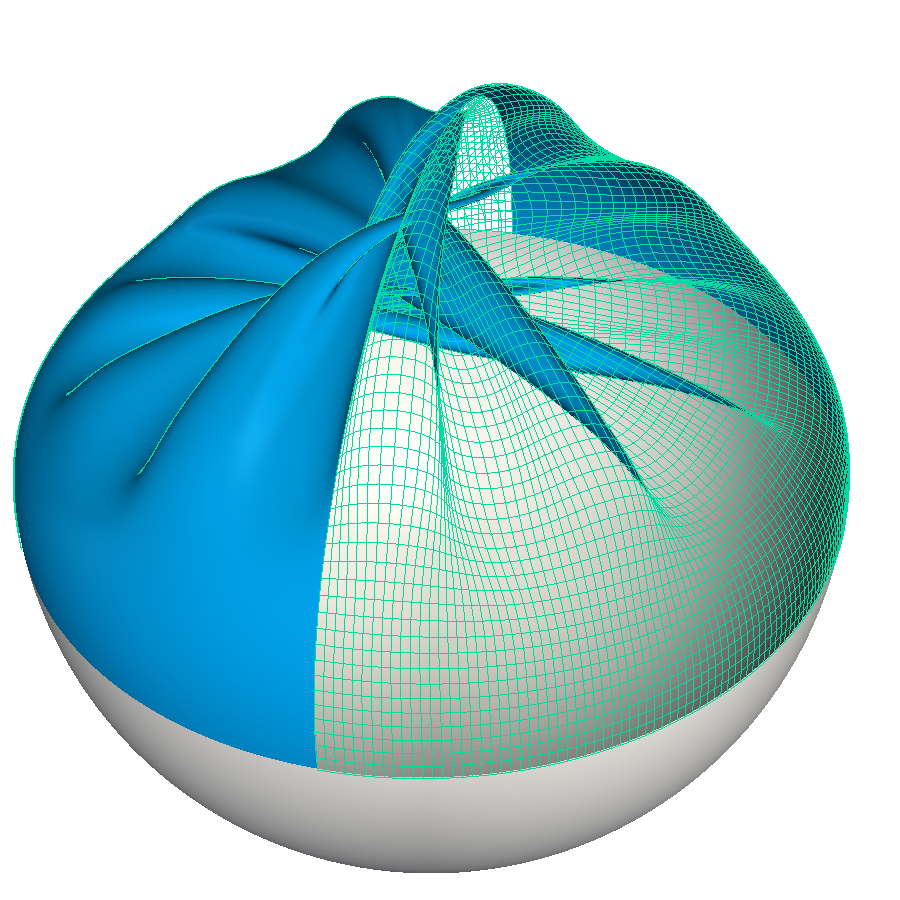}
\caption{ \footnotesize{ 
\textbf{An immersion of $\R P^2$ obtained by a Corrugation Process} (Image of the map $F_1$ of Subsection \protect \ref{RP2_close} with $\beta(x_1) =\dis \left(\frac{1+\cos(2\pi x_1/5)}{2}\right)^{0.75}$ and the same $\theta$, $\alpha$, $N$ as in Figure \protect \ref{image_conoid_local}.)}}\label{image_RP2}
\end{figure}

\noi
\textbf{Application 1: Immersion of $\R P^2$.--} The first immersion of $\R P^2$ into $\R^3$ is exhibited by Boy~\cite{Boy1901} in 1901, the now famous Boy's surface. Nevertheless, in his article the immersion is defined via a couple of drawings. The various attempts~\cite{Apery-book} to move from a pictorial to a formal description of this surface eventually find their way in the 1980's with the Apery parametrization~\cite{Apery86} and the Briant-Kusner immersion~\cite{Bryant84, Kusner87}. However, these results do not put an end to the search of noteworthy immersions of $\R P^2$ (see~\cite{GoodmanHoward} for a recent one). Here, we obtain a new immersion of $\R P^2$ by a direct application of a Corrugation Process on a neighborhood of the two singular points of a Plücker's conoid. The resulting maps define a family of immersions of $\R P^2$ whose analytic expressions are short and simple (see Equations~\eqref{equ_conoid_desing} and \eqref{equ_RP2_desing} of Subsections~\ref{corrugated_PC} and \ref{RP2_close}). The images of these immersions share some similarities with the {\it Tobacco Pouch Surfaces} of~\cite{Francis}.\\

\noi
\textbf{Application 2: Totally real isometric embeddings.--} The celebrated $C^1$-isometric embedding theorem of Nash \cite{Nash54} has been an inspirational source for the concept of $h$-principle and the development of the Convex Integration Theory by Gromov \cite{Gromov73, Gromov-book}. Several $C^1$-isometric theorems in the spirit of Nash have been obtained in many contexts, for Carnot-Caratheodory metrics~\cite{Dambra95}, in contact, symplectic and pseudo-Riemannian geometries~\cite{Dambra2000, DambraLoi2002, DambraDatta2006} and for sub-Riemannian manifolds~\cite{LeDonne2013}. \\

\noi
We endow $M$ with a Riemannian metric $g$ and we consider a (complete) almost Hermitian manifold $(W,J,h)$. A map $f_0:(M,g)\to (W,h)$ is strictly short if for all points of $M$, we have $f_0^*h\leq K g$ with $0\leq K<1$ (we denote by $h$ both the Hermitian metric of $W$ and the Riemannian metric it defines). The Nash-Kuiper Embedding Theorem states that if $f_0$ is strictly short then for every $\epsilon>0$ there exists a $C^1$-isometric map $f_{\infty}: (M,g)\rightarrow (W,h)$ such that $dist(f_{\infty}(x),f_0(x))\leq\epsilon$ for every $x\in M$. This map $f_{\infty}$ is obtained from $f_0$ by a sequence of deformations. Since $\im_{TR}$ is Kuiper with respect to $c$, we can replace each deformation by a Corrugation Process. In addition to produce a sequence $(f_{\ell})_{\ell\in\N}$ of totally real maps, the use of the Corrugation Process allows to control the geometry of each map to insure that the limit $f_{\infty}$ is totally real.

\begin{thm}\label{th2}
Let $(M^m,g)$ be a compact Riemannian manifold and $f_0:(M^m,g)\to (W^{2m},J,h)$ be a strictly short totally real immersion (resp. embedding). Then, for every $\epsilon>0$, there exists a $C^1$ totally real isometric immersion (resp. embedding) $f_{\infty}:(M^m,g)\to (W^{2m},J,h)$ such that $dist(f_{\infty}(x),f_0(x))\leq\epsilon$ for every $x\in M^m$.
\end{thm}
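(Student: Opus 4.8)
The plan is to run the classical Nash--Kuiper iterative scheme, realizing each of its elementary deformations by a Corrugation Process based on the pattern $c$ of Equation~\eqref{pattern_arc_de_cercle}, and then to exploit the rigidity of this single pattern to keep the whole sequence of maps inside a fixed compact part of $\im_{TR}(M,W)$, so that total reality survives the $C^1$ limit. After an initial smoothing (legitimate since being strictly short and being totally real are open conditions) I may assume $f_0\in C^\infty$. I would fix $\delta\in(0,1)$ close to $1$, set $g_\ell:=f_0^*h+(1-\delta^{\ell})(g-f_0^*h)$ so that $g_0=f_0^*h$ and $g_\ell\nearrow g$ with all increments $g_{\ell+1}-g_\ell$ small, and build smooth strictly short totally real immersions (resp. embeddings) $f_\ell$ with $f_\ell^*h$ as close to $g_\ell$ as I wish, starting from $f_0$.

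For one stage $f_\ell\rightsquigarrow f_{\ell+1}$ I would absorb the small deficit $D_\ell:=g_{\ell+1}-f_\ell^*h>0$ as follows. Cover the compact $M$ by finitely many charts and, using the classical decomposition of the cone of positive symmetric $2$-forms, write $D_\ell=\sum_{j=1}^{S}\mu_{\ell,j}$ as a finite sum of primitive metrics, each of the form $\rho_{\ell,j}\,dx\otimes dx$ in suitable local coordinates with $\rho_{\ell,j}\geq0$ supported in a chart; by the smallness of $D_\ell$, the lengthening ratios $\sqrt{1+\rho_{\ell,j}/|\partial_x f_\ell|^2}$ stay uniformly close to $1$. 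The stage will consist of $S$ steps, the $j$-th being a Corrugation Process in the $x$-direction adapted to $\mu_{\ell,j}$. For that step, choose in the chart a formal solution $\s$ of $\im_{TR}(M,W)$ with base section $f_\ell$ whose underlying bundle monomorphism $L$ is totally real and satisfies $L^*h=f_\ell^*h+\mu_{\ell,j}$ --- such an $L$ exists, is $C^0$-close to $df_\ell$ because $\mu_{\ell,j}$ is small and $df_\ell$ is already totally real, and equals $df_\ell$ off the support of $\rho_{\ell,j}$. By Theorem~\ref{th1}(2) the relation $\im_{TR}$ is relative Kuiper with respect to $c$, hence carries a $c$-shaped surrounding family $\bgamma$; putting $\gamma(x,\cdot):=\bgamma(\s(x),\partial_x f_\ell(x))$ and applying Proposition~\ref{prop_noInt} produces the integral-free update
\[
f_{\ell+1}=f_\ell+\frac{1}{N}\,K_c(\alpha,Nx)\,e_1+\frac{1}{N}\,K_s(\alpha,Nx)\,e_2,
\]
with $\alpha$, $e_1$, $e_2$ functions of $\s$ and $\partial_x f_\ell$ as in the discussion following Theorem~\ref{th1}. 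The amplitude is determined pointwise by $J_0(\alpha)=|\partial_x f_\ell|/|L(\partial_x)|$; since $J_0$ decreases bijectively from $J_0(0)=1$ to $J_0(\alpha_0)=0$ and the lengthening ratio is close to $1$, $\alpha$ takes values in a compact subinterval $[0,\alpha_1]\subset[0,\alpha_0)$ shrinking to $\{0\}$ as $\ell\to\infty$, and this is exactly the value making the fast-average of $f_{\ell+1}^*h$ equal to $f_\ell^*h+\mu_{\ell,j}$. The Average Constraint keeps the perturbation of the transverse derivatives and of $f_\ell$ itself $O(1/N)$, while $\|d(f_{\ell+1}-f_\ell)\|=O(\alpha)=O(\sqrt{\|D_\ell\|})$; the relative character of the Kuiper property lets the update vanish where $\rho_{\ell,j}=0$, so the local steps glue across overlapping charts with no further work.

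Next, the estimates. Choosing the integers $N$ in the successive steps large and growing fast enough, the usual Nash--Kuiper bookkeeping gives $\|g_{\ell+1}-f_{\ell+1}^*h\|=O(\delta^{\ell})$, hence $\|g_\ell-f_\ell^*h\|\to0$, together with $\sum_\ell\big(\|f_{\ell+1}-f_\ell\|_{C^0}+\|d(f_{\ell+1}-f_\ell)\|\big)<\infty$ --- the derivative part because $\|d(f_{\ell+1}-f_\ell)\|=O(\delta^{\ell/2})$ up to $O(1/N_{\ell-1})$ corrections. Thus $f_\ell$ converges in $C^1$ to a map $f_\infty$ with $f_\infty^*h=g$, i.e. a $C^1$ isometric immersion, with $\dist(f_\infty(x),f_0(x))\leq\epsilon$ after further enlarging the $N$'s. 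In the embedding case, keeping every $\|f_\ell-f_0\|_{C^0}$ small compared with the injectivity scale of the compact embedding $f_0$, together with the uniform immersion bounds on the $f_\ell$, forces each $f_\ell$ to remain injective; hence $f_\infty$ is an injective $C^1$ isometric immersion of the compact $M$, that is, an embedding.

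The hard part will be to check that $f_\infty$ is still totally real, the one property not granted by the classical scheme since $\im_{TR}$ is merely open and a $C^1$ limit of totally real immersions need not be totally real. Here the rigidity of the pattern $c$ is essential: a Corrugation Process in the $x$-direction rotates the plane $df(T_xM)$ inside $T_{f(x)}W$ by an amount governed only by $\alpha\in[0,\alpha_1]$ and by the step amplitude, so it changes the totally-real angle of $df(T_xM)$ --- the quantity measuring the transversality of $df(T_xM)$ and $J\,df(T_xM)$, of the type controlled in Lemma~\ref{lemme_Maslov_angle} --- by $O(\sqrt{\|D_\ell\|})=O(\delta^{\ell/2})$. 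Summing over all steps, the total variation of this angle between $f_0$ and $f_\infty$ is finite, and by taking $\delta$ close enough to $1$ (which slows the scheme without affecting the previous estimates) it can be made smaller than the uniform totally-real margin of $f_0$ on the compact $M$. Hence the whole sequence, and with it $f_\infty$, stays inside a fixed closed sub-relation of $\im_{TR}$, and $f_\infty$ is totally real. This uniform angular estimate is the genuine difficulty, and it is exactly what the uniform geometric effect of the Corrugation Process built on the single pattern $c$ --- rather than an unstructured loop family --- makes available.
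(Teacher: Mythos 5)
Your skeleton --- Nash--Kuiper stages realized by Corrugation Processes built on the pattern $c$, via Theorem~\ref{th1}(2) and Proposition~\ref{prop_noInt} (equivalently Proposition~\ref{kuiper_formula_is}), with the usual bookkeeping for $C^1$ convergence and for the embedding case --- is the same as the paper's. The genuine gap is exactly at the step you yourself call the hard part: the persistence of total reality in the limit. Your argument is additive: each step rotates the tangent plane by $O(\sqrt{\|D_\ell\|})$, the total rotation is finite, and ``by taking $\delta$ close enough to $1$'' it can be made smaller than the totally-real margin of $f_0$. This is quantitatively wrong. With $g_\ell=f_0^*h+(1-\delta^{\ell})\Delta$ the increments are $\delta^{\ell}(1-\delta)\Delta$, so the sum of the step amplitudes is of order $\sqrt{(1-\delta)\|\Delta\|}\,/(1-\sqrt{\delta})\sim 2\sqrt{\|\Delta\|}/\sqrt{1-\delta}$: it \emph{diverges} as $\delta\to1$ (slowing the scheme makes it worse, not better) and is in any case bounded below by roughly $\sqrt{\|\Delta\|}$, a fixed quantity unrelated to --- and in general much larger than --- the totally-real margin of the given short map $f_0$. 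So you cannot force the whole sequence into a fixed closed sub-relation of $\im_{TR}$ by keeping the accumulated rotation small. You also conflate the two quantities the pattern controls: Lemma~\ref{lemme_Maslov_angle} governs the Maslov \emph{phase}, which may make arbitrarily large excursions without ever threatening total reality; the transversality margin is the \emph{modulus}, i.e.\ the $J$-density $\kappa$.

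The paper's mechanism is multiplicative, not additive, and it uses the specific geometry of the loop: the two corrugation directions are $\mathbf{t}$ and $\mathbf{n}=J\mathbf{t}$, so each step moves $df(u)$ inside a $J$-complex line. Hence the squared complex determinant is multiplied by $\bigl(r^2/\|[df(u)]^{P^{\perp}}\|^2\bigr)e^{2i\alpha\cos(2\pi N\pi)}+O(1/N)$, whose modulus is at least $1$; the only loss in the normalized density comes from the metric lengthening, giving $\kappa(f_{k,i+1})\geq \kappa(f_{k,i})/\bigl(1+(\delta_{k+1}-\delta_k)\|\Delta\|_{f_0^*h}\bigr)+O(1/N_{k,i})$ (Lemmas~\ref{lemme_Jdensity1} and~\ref{lemme_Jdensity2}, which also require choosing $u$ to be $f^*h$-orthogonal to $\ker d\pi$ --- a normalization absent from your sketch). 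Since $\sum_k(\delta_{k+1}-\delta_k)<\infty$, the infinite product of these factors converges to a positive constant, and with the $N_{k,i}$ growing fast enough one gets $\kappa(f_\infty)\geq C\,\kappa(f_0)>0$, i.e.\ $f_\infty$ is totally real, with no smallness assumption on the defect $\Delta$ and no constraint tying $\delta$ to the margin of $f_0$. To repair your proof, replace the angle-summation argument by this $J$-density (or an equivalent multiplicative) estimate.
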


\noi
The proof crucially relies on the geometric control offered by the Kuiper property of the totally real relation.
We take advantage of the explicit expression~(\ref{pattern_arc_de_cercle}) of the pattern $c$ to provide in the following proposition a description of the Maslov component of the Gauss map of the map $f_{\infty}$ obtained from $f_0$ by the Corrugation Process (see Section~\ref{subsection_gauss_maslov} for the definition of the Maslov map).

\begin{prop}
\label{prop_maslov_intro}
Let  $\mathfrak{m}(f_0,f_{\infty})=e^{i\mathcal{W}_{\infty}}:M\rightarrow\Sph^1$ be the Maslov map of $f_{\infty}$ and $\mathcal{W}_{\infty}=2\sum_{\ell} \vartheta_{\ell}$ be the Maslov argument. Then if $\ell$ is large enough
$$\vartheta_{\ell}= \alpha_{\ell}\cos(2\pi N_{\ell}\pi_{\ell}) +  O\left( \frac{1}{N_{\ell}} \right).$$
\end{prop}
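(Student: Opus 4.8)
The plan is to trace how the Maslov argument accumulates under the Corrugation Process and to isolate its leading-order term. First I would recall, from the proof of Theorem~\ref{th2} together with the integral-free expression of Proposition~\ref{prop_noInt} applied to the pattern~\eqref{pattern_arc_de_cercle}, that at step $\ell$ the map $f_\ell$ is obtained from $f_{\ell-1}$ by adding corrugations of amplitude controlled by an angle function $\alpha_\ell$ in a direction $\pi_\ell$ (the relevant coordinate $x_1$ of the chart, after normalization), with frequency $N_\ell$. The Gauss map of a totally real immersion takes values in the space of totally real $m$-planes, and its Maslov component $\mathfrak{m}(f_0,f_\infty)=e^{i\mathcal W_\infty}$ is defined (Section~\ref{subsection_gauss_maslov}) so that $\mathcal W_\infty$ splits as a telescoping sum $\mathcal W_\infty = 2\sum_\ell \vartheta_\ell$, where $\vartheta_\ell$ measures the increment of the Maslov argument produced by the single corrugation at step~$\ell$.

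Next I would compute $\vartheta_\ell$ directly from the explicit pattern. Since the Corrugation Process adds $\frac1{N_\ell}\bigl(K_c(\alpha_\ell,N_\ell x_1)e_1 + K_s(\alpha_\ell,N_\ell x_1)e_2\bigr)$ to $f_{\ell-1}$, differentiating in the corrugation direction gives, up to an $O(1/N_\ell)$ error coming from the $x$-dependence of $\alpha_\ell,e_1,e_2$, a new tangent vector whose direction is $\bigl(\cos(\alpha_\ell\cos 2\pi N_\ell\pi_\ell)-J_0(\alpha_\ell)\bigr)e_1 + \sin(\alpha_\ell\cos 2\pi N_\ell\pi_\ell)e_2 + (\text{old tangent part})$. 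In other words the perturbed tangent plane is the image of the old one under a rotation by angle $\alpha_\ell\cos(2\pi N_\ell\pi_\ell)$ in the $(e_1,Je_1)$-type coordinates fixed by the Kuiper construction, plus a term of size $O(1/N_\ell)$. The Maslov argument being the (suitably normalized) determinant/argument of the complex-linear frame, its increment under such a rotation is exactly $2\alpha_\ell\cos(2\pi N_\ell\pi_\ell)$ before dividing by the factor~$2$ in $\mathcal W_\infty=2\sum\vartheta_\ell$; hence $\vartheta_\ell = \alpha_\ell\cos(2\pi N_\ell\pi_\ell) + O(1/N_\ell)$. I would invoke Lemma~\ref{lemme_Maslov_angle} here, which (per the introduction) provides precisely the control of the Maslov angle increment by the pattern.

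The main obstacle will be bookkeeping the error terms: one must check that the discrepancy between the true increment of the Maslov argument and the model rotation $\alpha_\ell\cos(2\pi N_\ell\pi_\ell)$ is genuinely $O(1/N_\ell)$ uniformly over $M$. This has two sources — the $x$-derivatives of $\alpha_\ell$, of the frame vectors $e_i$, and of the chart normalization $\pi_\ell$, all of which are multiplied by $1/N_\ell$ in the Corrugation formula; and the nonlinearity of the $\arg$ (or of the Grassmannian chart on totally real planes) which contributes quadratically small corrections once $N_\ell$ is large, using that $f_\ell\to f_\infty$ in $C^1$ so the tangent planes stabilize. The phrase "if $\ell$ is large enough" in the statement is exactly what lets us assume $N_\ell$ large and the maps close to $f_\infty$, so that the linearization is valid and the remainder absorbs into the stated $O(1/N_\ell)$. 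Assembling these estimates and substituting into $\mathcal W_\infty = 2\sum_\ell\vartheta_\ell$ yields the claimed asymptotic, drawing the analogy with a Weierstrass-type series whose $\ell$-th term oscillates at frequency $N_\ell$ with amplitude $\alpha_\ell$.
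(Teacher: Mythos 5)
Your proposal is correct and follows essentially the same route as the paper: the paper proves the statement as a direct consequence of Lemma~\ref{lemme_Maslov_angle}, whose proof is exactly the computation you sketch — property $(P_3')$ replaces $df_{\epsilon}(u)$ by the explicit loop $r e^{i\alpha\cos(2\pi N\pi)}\mathbf{t}+[df(u)]^P$ up to $O(1/N)$, so that $z(f_{\epsilon})=\det_{\C}^2$ picks up the phase $e^{2i\alpha\cos(2\pi N\pi)}$, giving the per-step increment $\vartheta_{\ell}=\alpha_{\ell}\cos(2\pi N_{\ell}\pi_{\ell})+O(1/N_{\ell})$, which is then summed over the steps as in Section~\ref{subsection_gauss_maslov}.
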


\noi
The angle $\alpha_{\ell}\cos(2\pi N_{\ell}\pi_{\ell})$ appearing in this proposition is the argument of the pattern $c$ used to build $f_{\ell}$ and with corrugation number $N_{\ell}$, amplitude $\alpha_{\ell}$ and direction $\pi_{\ell}$ depending on ${\ell}$. This proposition suggests that $\mathcal{W}_{\infty}$ shares some similarities with a Weiestrass function $x\mapsto \sum_k a^k\cos(2\pi b^kx)$, $x\in \R$. Recall that if $a<1<ab$, the Hausdorff dimension of the graph of this function is strictly larger than one and exhibits a self-similarity property.\\

\noi
In \cite{AhernRudin}, P. Ahern and W. Rudin provide a totally real embedding of $\Sph^3$ in $\C^3$. Any homothetic transformation with a sufficiently small factor of this embedding provides a strictly short totally real embedding.

\begin{coro}
There exist $C^1$ totally real isometric embeddings of the round 3-sphere inside $\C^3$.
\end{coro}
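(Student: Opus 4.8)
The corollary follows by assembling two inputs: the totally real embedding of $\Sph^3$ in $\C^3$ constructed by Ahern and Rudin in \cite{AhernRudin}, and the totally real isometric embedding theorem (Theorem~\ref{th2}). The plan is first to observe that being totally real is an open and scale-invariant condition, so that any homothety of a totally real embedding is again a totally real embedding. We endow $\Sph^3$ with its round metric $g$ and $\C^3$ with its flat Hermitian metric $h$; then for a homothety $\lambda\cdot(\,\cdot\,)$ with factor $\lambda>0$, the pulled-back metric is $\lambda^2$ times the metric pulled back by the original Ahern--Rudin map. Since $\Sph^3$ is compact, the pulled-back metric of the original embedding is bounded above by $Cg$ for some $C>0$, so choosing $\lambda$ small enough (precisely $\lambda^2 C < 1$) makes the homothetic embedding strictly short in the sense required in the statement of Theorem~\ref{th2}.

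With this strictly short totally real embedding $f_0:(\Sph^3,g)\to(\C^3,J,h)$ in hand, I would then simply invoke Theorem~\ref{th2} with $M=\Sph^3$ ($m=3$), $W=\C^3$ ($2m=6$): it produces, for any $\epsilon>0$, a $C^1$ totally real isometric embedding $f_\infty:(\Sph^3,g)\to(\C^3,J,h)$. This is exactly the assertion of the corollary (the $\epsilon$-proximity to $f_0$ is a bonus that is not even needed in the statement). One should check that the hypotheses of Theorem~\ref{th2} are met: $\Sph^3$ is a compact Riemannian manifold, $(\C^3,J,h)$ is a complete almost Hermitian (in fact Hermitian, hence Kähler) manifold, and $f_0$ is a strictly short totally real embedding by the previous paragraph. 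All these are routine verifications.

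There is essentially no hard step here: the only subtlety is making sure the scaling argument is stated correctly, namely that $f_0^*h = \lambda^2\,(\text{Ahern--Rudin pullback metric})$ and that compactness of $\Sph^3$ gives a uniform bound allowing the choice of $\lambda$. One should also note in passing that total realness is preserved under homothety because $d(\lambda f)(T_xM) = \lambda\, df(T_xM)$ and $Jd(\lambda f)(T_xM) = \lambda\, Jdf(T_xM)$, so the direct sum decomposition $df(T_xM)\oplus Jdf(T_xM)=T_{f(x)}\C^3$ is unchanged by scaling. Thus the corollary is an immediate application of Theorem~\ref{th2} to the rescaled Ahern--Rudin embedding, and the proof can be kept to a few lines.
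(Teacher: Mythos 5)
Your proposal is correct and follows exactly the paper's route: rescale the Ahern--Rudin totally real embedding of $\Sph^3$ in $\C^3$ by a small homothety to make it strictly short (using compactness and the scale-invariance of the totally real condition), then apply Theorem~\ref{th2}. This is precisely the argument the paper intends, with your verification of the hypotheses filling in the routine details.
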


\noi
Note that the sphere $\Sph^m$ admits a totally real embedding in $\C^m$ if and only if $m=1$ or 3~\cite{Gromov-book}.
By comparison, there is no Lagrangian embedding of $\Sph^m$ inside $\C^m$ if $m>1$~\cite{Gromov85}.

\section*{Acknowledgements}

The results presented here are a part of my PhD Thesis. I wish to express my gratitude towards the whole team of the {\it Hevea project} and specially to my advisors Vincent Borrelli and Boris Thibert for their help and support. I greatly thank Patrick Massot for his interest towards this work and many valuable comments that help me to improve the content of this text.

\tableofcontents

\section{Corrugation Process}\label{section-CP}
 
\subsection{Convex Integration}\label{defCI}

We first recall the Convex Integration formula. Let $f_0:[0,1]^m\rightarrow \mathbb{R}^n$ be a map of class $C^k$ with $k \geq 1$, $\dj$ be a direction, $\gamma :[0,1]^m\times\R/\Z \rightarrow \mathbb{R}^n$ be a loop family of class $C^{k-1}$ and $N\in \; ]0,+\infty[$ be a number. The Convex Integration process builds a map
$F_1:[0,1]^m\rightarrow \mathbb{R}^n$ of class $C^{k-1}$ by setting :
\begin{eqnarray}\label{formula_CI}
F_1(x):= f_0(x_1,\ldots ,0,\ldots ,x_m)+ \int_{s=0}^{x_j} \gamma(x_1,\ldots ,s,\ldots ,x_m,Ns) \ ds.
\end{eqnarray}
We say that $F_1$ is obtained from $f_0$ by {\it Convex Integration} in the direction $\partial_j$ and write $F_1=CI_{\hgamma}(f_0,\dj,N)$.\\

\noi
The space $[0,1]^m$ is foliated by the line segments $S_{x_{\widehat{\jmath}}}:=(x_1,\ldots,x_{j-1})\times [0,1] \times (x_{j+1},\ldots,x_m)$, defined for each $x_{\widehat{\jmath}}:=(x_1,\ldots,x_{j-1},x_{j+1},\ldots,x_m)\in [0,1]^{m-1}$. In the formula, the integration is performed along these segments between the boundaries $x_j=0$ and $x_j=1$.

\begin{defn}\label{def_average_cond}
We say that a loop family $\gamma :[0,1]^m\times\R/\Z\rightarrow \mathbb{R}^n$ satisfies the \textit{Average Constraint with respect to $f_0$ in the direction $\partial_j$} if
\begin{eqnarray*}
(AC) \ \ \ \ \forall x\in [0,1]^m,\ \ \ \partial_{j} f_0(x) = \int_0^1 \gamma(x,t)dt.
\end{eqnarray*}
\end{defn}

\noi
Remark that the Average Constraint implies that $x\mapsto \gamma(x,t)$ has the same regularity than $x\mapsto \partial_{j} f_0(x)$. In practice, the loop family $\gamma$ is assumed to be $C^{k-1}$ and the map $f_0$ is $C^k$.

\begin{prop}{(\cite{ensaios})}\label{propICFormula}
Let $f_0:[0,1]^m\rightarrow \mathbb{R}^n$ be a map of class $C^k$ with $k\geq 2$ and $\gamma:[0,1]^m\times\R/\Z\rightarrow \mathbb{R}^n$ be a loop family of class $C^{k-1}$ which satisfies the average constraint (AC). Then the map $F_1=CI_{\hgamma}(f_0,\dj,N)$ is of class $C^{k-1}$ and satisfies 
\begin{itemize}
\item[$(P_1)$]\ \ $\|f_0-F_1\|_{\infty} = O(1/N)$, ($C^0$-density),
\item[$(P_2)$]\ \ $\| \partial_{i} f_0 -\partial_{i} F_1 \|_{\infty} = O(1/N)$ for every $i\neq j$,
\item[$(P_3)$]\ $\forall x\in [0,1]^m,\ \ \ \partial_{j} F_1(x)=\gamma(x,Nx_j)$.
\end{itemize}
\end{prop}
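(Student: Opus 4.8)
The plan is to dispose of $(P_3)$ by the Fundamental Theorem of Calculus, to reduce $(P_1)$ and $(P_2)$ to a single oscillatory estimate by means of the Average Constraint, and to prove that estimate by one integration by parts in the ``slow'' variable. That $F_1$ is of class $C^{k-1}$ is immediate from \eqref{formula_CI}: the boundary term $f_0(x_1,\ldots,0,\ldots,x_m)$ is $C^k$, and in the integral term the only new differentiations are those of $\gamma$, which is $C^{k-1}$. For $(P_3)$ the boundary term does not depend on $x_j$, so differentiating \eqref{formula_CI} in the direction $\partial_j$ gives $\partial_j F_1(x) = \gamma(x_1,\ldots,x_j,\ldots,x_m,Nx_j) = \gamma(x,Nx_j)$, using nothing but continuity of $\gamma$.

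Next I would use $(AC)$ to rewrite $F_1 - f_0$. Splitting the integrand as $\gamma = (\gamma - \overline{\gamma}) + \overline{\gamma}$ and invoking $(AC)$ to identify $\overline{\gamma}(x_1,\ldots,s,\ldots,x_m) = \partial_j f_0(x_1,\ldots,s,\ldots,x_m)$, the mean part integrates along the segment $S_{x_{\widehat{\jmath}}}$ to $f_0(x) - f_0(x_1,\ldots,0,\ldots,x_m)$, whence
\begin{eqnarray*}
F_1(x) - f_0(x) = \int_{s=0}^{x_j}\Big(\gamma(x_1,\ldots,s,\ldots,x_m,Ns) - \overline{\gamma}(x_1,\ldots,s,\ldots,x_m)\Big)\,ds.
\end{eqnarray*}
So the proposition reduces to the following claim: for any loop family $\beta$ of class $C^1$ with mean $\overline{\beta}$, the function $R^{\beta}_N(x) := \int_{s=0}^{x_j}\big(\beta(x_1,\ldots,s,\ldots,x_m,Ns) - \overline{\beta}(x_1,\ldots,s,\ldots,x_m)\big)\,ds$ satisfies $\|R^{\beta}_N\|_{\infty} = O(1/N)$, with the implied constant controlled by $\|\beta\|_{C^1}$. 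Applied to $\beta = \gamma$ this yields $(P_1)$. For $(P_2)$ I would differentiate $(AC)$ in a transverse direction $\partial_i$, $i\neq j$, to obtain $\int_0^1(\partial_i\gamma)(x,t)\,dt = \partial_i\partial_j f_0(x)$, and then differentiate \eqref{formula_CI} under the integral sign; this gives $\partial_i F_1(x) - \partial_i f_0(x) = R^{\partial_i\gamma}_N(x)$, so the claim applied to $\beta = \partial_i\gamma$ yields $(P_2)$.

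To prove the claim, introduce the fast primitive $\Psi^{\beta}(x,w) := \int_0^w\big(\beta(x,t) - \overline{\beta}(x)\big)\,dt$. Since $\beta$ is a loop family and $\overline{\beta}$ is its mean, $\Psi^{\beta}$ is $1$-periodic in $w$, vanishes at $w = 0$, is bounded independently of $N$, and $\partial_w\Psi^{\beta} = \beta - \overline{\beta}$. Differentiating $s\mapsto \tfrac{1}{N}\Psi^{\beta}(x_1,\ldots,s,\ldots,x_m,Ns)$ and rearranging gives, for each fixed $x_{\widehat{\jmath}}$, the pointwise identity $\beta(x_1,\ldots,s,\ldots,x_m,Ns) - \overline{\beta}(x_1,\ldots,s,\ldots,x_m) = \tfrac{d}{ds}\big[\tfrac{1}{N}\Psi^{\beta}(x_1,\ldots,s,\ldots,x_m,Ns)\big] - \tfrac{1}{N}(\partial_s\Psi^{\beta})(x_1,\ldots,s,\ldots,x_m,Ns)$; integrating over $[0,x_j]$ and using $\Psi^{\beta}(\cdot,0)=0$ yields
\begin{eqnarray*}
R^{\beta}_N(x) = \frac{1}{N}\Psi^{\beta}(x_1,\ldots,x_j,\ldots,x_m,Nx_j) - \frac{1}{N}\int_{s=0}^{x_j}(\partial_s\Psi^{\beta})(x_1,\ldots,s,\ldots,x_m,Ns)\,ds.
\end{eqnarray*}
Both $\Psi^{\beta}$ and $\partial_s\Psi^{\beta}$ are continuous on the compact set $[0,1]^m\times\R/\Z$, hence bounded by a constant depending only on $\|\beta\|_{C^1}$, and the integral has length $x_j\le 1$; therefore $\|R^{\beta}_N\|_{\infty} = O(1/N)$, which proves the claim and with it $(P_1)$ and $(P_2)$.

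The step that requires the most care is the regularity bookkeeping: one must check that $\Psi^{\beta}$ is genuinely $C^1$ in the slow variable so that the integration by parts above is legitimate, and — for $(P_2)$ — that differentiating $(AC)$ is permitted and that $\partial_i\gamma$, together with its mean $\partial_i\partial_j f_0$, still carries enough smoothness to run the $\Psi^{\partial_i\gamma}$ argument. This is exactly where the standing hypotheses $f_0\in C^k$ and $\gamma\in C^{k-1}$ are used; the remaining ingredients — differentiation under the integral sign, the Fundamental Theorem of Calculus, $1$-periodicity, and compactness — are routine.
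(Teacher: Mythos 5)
Your route — FTC for $(P_3)$, using the Average Constraint to rewrite $F_1-f_0$ as the oscillatory remainder $R^{\gamma}_N$, and estimating it by one integration by parts against the fast primitive $\Psi^{\beta}$ — is exactly the standard argument behind this proposition; the paper does not reprove it (it is quoted from \cite{ensaios}), and the constants recorded right after the statement, $C_{P_1}=2\|\gamma\|_{\infty}+\|\partial_j\gamma\|_{\infty}$, come from precisely this computation. Your treatment of the regularity of $F_1$, of $(P_3)$, of the reduction via $(AC)$, and of $(P_1)$ is correct.

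The weak point is $(P_2)$, at the step you defer to ``regularity bookkeeping'' and then attribute to the standing hypotheses. Your claim on $R^{\beta}_N$ needs $\beta$ to have a continuous derivative in the slow variable $x_j$ (so that $\partial_s\Psi^{\beta}$ exists and the integration by parts is legitimate), and for $(P_2)$ you apply it to $\beta=\partial_i\gamma$, which under the hypotheses as written is only of class $C^{k-2}$: when $k=2$ it is merely continuous, the mixed derivative $\partial_j\partial_i\gamma$ is not available, and the $\Psi^{\partial_i\gamma}$ argument does not run. This is not a removable technicality: for $\gamma$ merely $C^1$ the rate $O(1/N)$ in $(P_2)$ can genuinely fail. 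Take $m=2$, $n=1$, $j=1$, $f_0\equiv 0$ and $\gamma(x_1,x_2,t)=G(x_1x_2)\cos(2\pi t)$ with $G(v)=\int_0^vG'$, where $G'(v)=\sum_{k\geq 1}2^{-k\alpha}\cos(2\pi 2^k v)$, $0<\alpha<1$, is a (continuous, Weierstrass-type) lacunary series; then $\gamma$ is $C^1$, $\overline{\gamma}=0$ so $(AC)$ holds, and for $N=2^K$ one gets $\partial_2F_1(1,1)=\int_0^1 sG'(s)\cos(2\pi Ns)\,ds=\tfrac14\,2^{-K\alpha}=\tfrac14 N^{-\alpha}$, using $\int_0^1 s\cos(2\pi Ms)\,ds=0$ for every nonzero integer $M$; this is not $O(1/N)$. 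So the correct reading — consistent with the quoted constant $C_{P_2}=2\|\partial_i\gamma\|_{\infty}+\|\partial_i\partial_j\gamma\|_{\infty}$, which presupposes the mixed derivative — is that the argument requires $\partial_i\partial_j\gamma$ to exist and be continuous (e.g.\ $k\geq 3$, or $\gamma$ of class $C^{k-1}$ with $k-1\geq 2$ in the slow variables). With that proviso stated explicitly your proof is complete and coincides with the intended one; as written, the final paragraph claiming the hypotheses $f_0\in C^k$, $\gamma\in C^{k-1}$, $k\geq 2$ suffice for the $\Psi^{\partial_i\gamma}$ step is incorrect at the edge case $k=2$.
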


\noi
Note that property $(P_3)$ follows directly from the Convex Integration formula and that the constants involved in the notation $O(1/N)$ can be taken to be 
$C_{P_1}=2\|\gamma\|_{\infty}+\|\dj\gamma\|_{\infty}$ and $C_{P_2}=2\|\partial_i\gamma\|_{\infty}+\|\partial_i\dj\gamma\|_{\infty}$ (see \cite{PNAS}).

\subsection{Corrugation Process and comparison with Convex Integration Formula}\label{subsectionCP}

\begin{defn}\label{def1}
Let $f_0:[0,1]^m\rightarrow \mathbb{R}^n$ be a map, $\dj$ be a direction, $\gamma :[0,1]^m\times\R/\Z \rightarrow \mathbb{R}^n$ be a loop family and $N\in\; ]0,+\infty[$. We define the map $f_1:[0,1]^m\rightarrow \mathbb{R}^n$ by 
\begin{eqnarray}\label{formula_CP}
f_1(x):=f_0(x) + \frac{1}{N}\Gamma(x,Nx_j)
\;\;\mbox{ with }\;\;
\Gamma(x,t) = \int_{s=0}^t \gamma(x,s)-\overline{\gamma}(x) ds 
\end{eqnarray}
and $\overline{\gamma}(x)=\int_0^1 \gamma(x,s) ds$ denotes the average of the loop $t\mapsto\gamma(x,t)$. 
\end{defn}

\noi
\textbf{Remark.} If $x\mapsto\gamma(x,\cdot)$ is a $C^{k}$-map and $t\mapsto \gamma(\cdot,t)$ is a $C^{k-1}$-map, then $(x,t)\mapsto \Gamma(x,t)$ is a $C^{k}$-map. 
\\

\noi
We say that $f_1$ is obtained from $f_0$ by a {\it Corrugation Process} in the direction $\partial_j$ and we denote $f_1=CP_{\hgamma}(f_0,\dj,N)$. The real number $N$ is called the {\it number of corrugations}.

\begin{prop}\label{propCP}
Let $f_0:[0,1]^m\rightarrow \mathbb{R}^n$ be a map of class $C^k$ with $k\geq 2$ and $\gamma:[0,1]^m\times\R/\Z\rightarrow \mathbb{R}^n$ be a loop family such that $x\mapsto\gamma(x,\cdot)$ is $C^{k-1}$ and $t\mapsto \gamma(\cdot,t)$ is $C^{k-2}$. Then the map $f_1=CP_{\hgamma}(f_0,\dj,N)$ is of class $C^{k-1}$ and we have
\begin{itemize}
\item[$(P_1)$] \ \ $\|f_0-f_1\|_{\infty} = O(1/N)$ ($C^0$-density).
\item[$(P_2)$] \ \ $\| \partial_{i} f_0 -\partial_{i} f_1 \|_{\infty} = O(1/N)$ for every $i\neq j$.
\item[$(P_3')$]\ \ If $\gamma$ satisfies the Average Constraint (AC) with respect to $f_0$ in the direction~$\partial_j$ then $\partial_{j} f_1(x) = \gamma(x,Nx_j) + O(1/N)$ for all $x\in [0,1]^m$.
\end{itemize}
\end{prop}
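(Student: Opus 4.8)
The plan is to compute $f_1 = CP_{\hgamma}(f_0,\dj,N)$ directly from formula~\eqref{formula_CP} and track the error terms in powers of $1/N$. Write $\Gamma(x,t) = \int_0^t (\gamma(x,s)-\overline{\gamma}(x))\,ds$, which is $1$-periodic in $t$ because the integrand has zero average over a period. The key elementary observation is a uniform bound $\|\Gamma\|_\infty \le 2\|\gamma\|_\infty$ (since $\Gamma(x,t)$ is a difference of two integrals of $\gamma$ over intervals of length at most $1$), and similarly $\|\partial_i \Gamma\|_\infty \le 2\|\partial_i\gamma\|_\infty + \|\partial_i\overline\gamma\|_\infty$ for each $i$, obtained by differentiating under the integral sign. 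The regularity statement follows from the Remark preceding the proposition: if $x\mapsto\gamma(x,\cdot)$ is $C^{k-1}$ and $t\mapsto\gamma(\cdot,t)$ is $C^{k-2}$, then $\Gamma$ is $C^{k-1}$ in both variables jointly, hence $x\mapsto \frac1N\Gamma(x,Nx_j)$ is $C^{k-1}$, and adding the $C^k$ map $f_0$ gives a $C^{k-1}$ map $f_1$.

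For $(P_1)$, note that $f_1(x)-f_0(x) = \frac1N\Gamma(x,Nx_j)$, so $\|f_1-f_0\|_\infty \le \frac1N\|\Gamma\|_\infty = O(1/N)$ immediately. For $(P_2)$, fix $i\neq j$ and differentiate: since the second argument $Nx_j$ of $\Gamma$ does not involve $x_i$, the chain rule gives $\partial_i f_1(x) = \partial_i f_0(x) + \frac1N (\partial_i\Gamma)(x,Nx_j)$, where $\partial_i\Gamma$ denotes the partial derivative of $\Gamma$ with respect to its first (spatial) slot. Hence $\|\partial_i f_1 - \partial_i f_0\|_\infty \le \frac1N\|\partial_i\Gamma\|_\infty = O(1/N)$. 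This is where the separation of space and time variables in the Corrugation Process pays off: unlike the Convex Integration formula, no derivative hits a large factor $N$, because $x_i$ only appears in the slow variable.

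For $(P_3')$, differentiate in the $x_j$ direction. Here $x_j$ appears both in the first slot of $\Gamma$ and in the second slot through $Nx_j$, so
\begin{eqnarray*}
\partial_j f_1(x) = \partial_j f_0(x) + \frac1N(\partial_j\Gamma)(x,Nx_j) + \partial_t\Gamma(x,Nx_j),
\end{eqnarray*}
where the last term comes from $\frac1N \cdot N \cdot \partial_t\Gamma$. By the fundamental theorem of calculus $\partial_t\Gamma(x,t) = \gamma(x,t)-\overline\gamma(x)$, and the Average Constraint (AC) gives $\overline\gamma(x)=\partial_j f_0(x)$, so the $\partial_j f_0$ terms cancel against $-\overline\gamma(x)$ and we are left with $\partial_j f_1(x) = \gamma(x,Nx_j) + \frac1N(\partial_j\Gamma)(x,Nx_j) = \gamma(x,Nx_j) + O(1/N)$, using $\|\partial_j\Gamma\|_\infty \le 2\|\partial_j\gamma\|_\infty + \|\partial_j\overline\gamma\|_\infty < \infty$.

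The only mild subtlety — the "hard part", though it is not really hard — is bookkeeping the regularity hypotheses so that all the differentiations under the integral sign and all the chain-rule steps are justified: one needs $\partial_t\Gamma = \gamma - \overline\gamma$ to be continuous (requiring $t\mapsto\gamma(\cdot,t)$ to be $C^0$, i.e. $C^{k-2}$ with $k\ge2$), and the mixed derivative $\partial_j\partial_t\Gamma = \partial_j\gamma$ to exist and be bounded for the $O(1/N)$ control in $(P_3')$, which is exactly the hypothesis that $t\mapsto\gamma(\cdot,t)$ is $C^{k-2}$ combined with $x\mapsto\gamma(x,\cdot)$ being $C^{k-1}$. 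Once these are in place every estimate is a one-line sup-norm bound, and one can record the explicit constants $C_{P_1} = 2\|\gamma\|_\infty$, $C_{P_2} = 2\|\partial_i\gamma\|_\infty + \|\partial_i\overline\gamma\|_\infty$ in parallel with the Convex Integration case of Proposition~\ref{propICFormula}.
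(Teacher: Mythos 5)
Your proof is correct and follows essentially the same route as the paper: compute the derivatives of $x\mapsto f_0(x)+\frac1N\Gamma(x,Nx_j)$ directly, use the $1$-periodicity of $\Gamma$ and of its spatial derivatives in $t$ to get uniform bounds independent of $N$, and invoke the Average Constraint to cancel $\partial_j f_0$ against $\overline\gamma$ in $(P_3')$. The only cosmetic difference is that the paper leaves the spatial-derivative term in $(P_3')$ as $\frac1N\int_0^{Nx_j}\partial_j(\gamma-\overline\gamma)\,dt$ rather than naming it $\frac1N(\partial_j\Gamma)(x,Nx_j)$, and records the slightly sharper constants $K_{P_2}=2\|\partial_i\gamma\|_\infty$, $K_{P_3'}=2\|\partial_j\gamma\|_\infty$.
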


\noi
{\bf Remark 1.-- } Propositions \ref{propICFormula} and \ref{propCP} show that both the Corrugation Process and the Convex Integration Formula share the same property $P_1$, $P_2$ and $P_3$ up to an $O(1/N)$. It ensues that the two formulas can be used equally in the Convex Integration Theory.\\ 

\noi
{\bf Remark 2.-- } From the proof below, it is readily seen that the constants involved in the notation $O(1/N)$ can be taken to be $K_{P_1}=2\|\gamma\|_{\infty}$, $K_{P_2}=2\|\partial_i\gamma\|_{\infty}$ and $K_{P_3'}=2\|\partial_j\gamma\|_{\infty}.$\\

\begin{proof}
Property $(P_1)$ is obvious since $\Gamma$ is a periodic map, so bounded. Regarding the derivatives in the direction $\partial_i\neq \partial_j$, since $\Gamma$ is $C^1$, we have 
\begin{eqnarray*}
\partial_{i} f_1(x)
&=& \partial_{i} f_0(x) + \frac{1}{N}\partial_{i}\left( \Gamma(x,Nx_j)\right)\\
&=& \partial_{i} f_0(x) + O\left( \frac{1}{N}\right).
\end{eqnarray*}
and this shows property $(P_2).$ For the property $(P_3')$, since $\gamma$ and $\overline{\gamma}$ are $C^1$, we have:
\begin{eqnarray*}
\partial_{j} f_1(x)
&=& \partial_{j} f_0(x) + \frac{1}{N} \partial_ {j} \left( \int_{t=0}^{Nx_j} \gamma(x,t)- \overline{\gamma}(x)dt \right) \\
&=& \partial_{j} f_0(x) + \gamma(x,Nx_j)- \overline{\gamma}(x) +\frac{1}{N} \int_{t=0}^{Nx_j} \partial_{j} \Big( \gamma(x,t)-\overline{\gamma}(x)\Big)dt \\
&=& \partial_{j} f_0(x) + \gamma(x,Nx_j)- \overline{\gamma}(x) +O\left(\frac{1}{N} \right).
\end{eqnarray*}
Since $\overline{\gamma}(x) -\partial_{j} f_0(x)=0$ by the Average Constraint, we obtain $(P_3').$ 
\end{proof}

\subsection{Extra properties}\label{extraProp}

\noi
\textbf{Coordinate-free expression:} The formula~(\ref{formula_CP}) of the Corrugation Process can be rewritten without using coordinates as follows
$$f_1=\CP(f_0,\pi,N):x\mapsto \exp_{f_0(x)}\frac{1}{N}\int_0^{N\pi(x)}(\gamma(x,t)-\overline{\gamma}(x))dt.$$
In that formula, $f_0:U\rightarrow (W,h)$ is a map from an open set $U\subset M$ to a complete manifold $W$ endowed with a Riemannian metric $h$, $\exp$ is the exponential map induced by $h$, $\pi:U\rightarrow \R$ is a submersion and $\gamma:U\times \R/\Z\rightarrow f_0^*TW$ is a $C^{k-1}$ loop family such that $\gamma(x,.):\R/\Z\rightarrow f_0^*TW_x$ for every $x\in U$. This expression reduces to the one in Definition~\ref{def1} if $M=[0,1]^m$, $W=\E^n$ and $\pi(x)=x_j$. In that case $d\pi=dx_j=\langle \partial_j,\cdot\rangle$ and $\exp_{f_0(x)}y = f_0(x) +y$ for every $x\in [0,1]^m$ and $y\in \R^n$. \\

\noi
Properties $(P_1)$, $(P_2)$ and $(P_3')$ may also be reformulated. If $d_W$ and $d_{TW}$ are arbitrary distances in $W$ and $TW$, we have
\begin{itemize}
\item[$(P_1)$] $d_W(f_1(x),f_0(x))=O(1/N)$ for every $x\in U,$
\item[$(P_2)$] $d_{TW}((df_1)_x(v),(df_0)_x(v))=O(1/N)$ for every $v\in\ker d\pi_x.$
\item[$(P_3')$] $d_{TW}\Big((df_1)_x(u), \gamma(x,N\pi(x))\Big) = O(1/N)$ for every $u\in T_xM$ such that $d \pi_x(u)=1$ and if $\gamma$ satisfies the Average Constraint $\overline{\gamma}(x)=(df_0)_x(u).$
\end{itemize}
The proofs are left to the reader.\\

\noi
\textbf{Relative property:} If $t\mapsto\gamma(x,t)$ is constant then $f_1(x)=\CP(f_0,\pi,N)(x)$ is $f_0(x)$.\\ 

\noi
\textbf{Periodicity property:} Let $f_1=\CP(f_0,\pi,N)$. If $N$ is a non-zero integer and $x\in U$ is such that $\pi(x)\in\Z$ then $f_1(x)=f_0(x)$. Indeed, the map $t\mapsto \Gamma(x,t)$ is $1$-periodic.

\section{Differential relations}\label{sectionDiffRelation}

Convex Integration is a powerful tool to solve differential equations/inequations of order one. These differential conditions define a subspace $\rel$ of the 1-jet space called {\it a differential relation}. Under some convexity conditions on $\rel$, the Convex Integration formula produces from any appropriate initial map a solution by applying Formula~(\ref{formula_CI}) iteratively. Similarly, the Corrugation Process Formula~(\ref{formula_CP}) allows to build solutions of a differential relation provided that the same convexity conditions hold. In this section, we introduce the notion of a loop pattern, that is a family of loops $c$ indexed by a finite number of parameters (see Definition~\ref{def_c}). We address the question of finding universal loop families $\bgamma$ independently of the data of the initial map (see~\ref{surrounding_dr}). We say that $\rel$ is a Kuiper relation with respect to $c$ if a $c$-shaped family $\bgamma$ exists (see~\ref{Kuiper-rel}). We then show that, for a Kuiper relation, the integrand in the Corrugation Process formula only involves the map $c$.

\subsection{Solutions and subsolutions of a differential relation}

\noi
We denote by $J^1(M,W)\rightarrow M$ the $1$-jet bundle of $C^1$-maps from $M$ to $W$. A point $\sigma$ in  $J^1(M,W)$ is a triple $\sigma=(x,y,L)$ where $x\in M$, $y\in W$ and $L\in\mathcal{L}(T_xM,T_yW)$. We endow the $1$-jet space with a distance function $\dist$. A \textit{differential relation of order $1$} is a subset $\rel\subset J^1(M,W)$, a section $\s:M\rightarrow \rel$ is called a \textit{formal solution}. It is called \textit{holonomic} if $\s=j^1f$  for some map $f:M\rightarrow W.$\\

\noi
Given a differential relation, our goal is to use the Corrugation Process to build from a section $\s$ a solution $f$ of $\rel.$ To do so, we have to add some additional assumption on the base map $f_0=bs\;\s$ of the section $\s$. 
Let $\sigma=(x,y,L)\in \rel$ and $(\lambda, u)\in T_x^*M\times T_xM$ such that $\lambda(u)=1$. We set
$$\rel(\sigma,\lambda,u) := Conn_{L(u)}\{ v\in T_yW \; |\; (x,y,L+(v-L(u))\otimes\lambda)\in \rel\}$$
where $Conn_aA$ denotes the path connected component of $A$ that contains $a$. Note that the linear map $L+(v-L(u))\otimes\lambda$ coincides with $L$ over $\ker \lambda$ and maps $u$ to $v$. We then denote by $\iconv\;\rel(\sigma,\lambda,u)$ the convex hull of $\rel(\sigma,\lambda,u_x).$

\begin{defn}
Let $U\subset M$, $\pi:U\rightarrow \R$ be a submersion and $u:U\rightarrow TM$ be a vector field such that $d\pi_x(u_x)=1$. Let $x\mapsto\s(x)=(x, f_0(x),L(x))$ be a formal solution of $\rel$ over $U$ with base map $f_0=\mathrm{bs}\;\s$. If for all $x$ in $U$ the map $f_0$ satisfies
$$df_0(u_x)\in \iconv\;\rel(\s(x),d\pi_x,u_x),$$
then the formal solution $\s$ is called a {\it subsolution of} $\rel$ {\it with respect to} $(d\pi,u)$.
\end{defn}

\noi
The following lemma is a straightforward consequence of the relative property of the Corrugation Process (cf Subsection~\ref{extraProp}).

\begin{lemme}\label{gamma_dans_R}
Let $\rel$ be an open differential relation and let $\s$ be a subsolution of $\rel$ with respect to $(d\pi,u)$ and with base map $f_0= \mathrm{bs}\;\; \s$. If $\gamma$ satisfies the Average Constraint with respect to $f_0$ in the direction $u$ and if 
$$\forall (x,t)\in U\times\R/\Z,\;\;\;\;\;\gamma(x,t)\in  \rel(\s(x),d\pi_x,u_x)$$
then, for $N$ large enough, $f_1:=\CP(f_0,\pi,N)$ satisfies
$$\forall x\in U,\;\;\;\;\;df_1(u_x)\in \mathcal{R}(\mathfrak{S}(x),d\pi_x,u_x).$$
\end{lemme}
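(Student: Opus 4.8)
The plan is to combine property $(P_3')$ of the Corrugation Process (Proposition~\ref{propCP}, in its coordinate-free form stated in Subsection~\ref{extraProp}) with the openness of $\rel$. First I would fix $x\in U$ and recall what $(P_3')$ gives us: since $\gamma$ satisfies the Average Constraint with respect to $f_0$ in the direction $u$, we have $\overline{\gamma}(x)=(df_0)_x(u_x)$, and hence for any unit-$d\pi$ vector $u_x$ (i.e.\ $d\pi_x(u_x)=1$),
\[
d_{TW}\bigl((df_1)_x(u_x),\,\gamma(x,N\pi(x))\bigr)=O(1/N),
\]
with a constant controlled by $2\|\partial_j\gamma\|_\infty$ (Remark~2), uniformly in $x\in U$ because $U$ is the domain on which the bounds are taken. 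So $(df_1)_x(u_x)$ is within $O(1/N)$ of the point $\gamma(x,N\pi(x))$, which by hypothesis lies in $\rel(\s(x),d\pi_x,u_x)$.

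Next I would use that $\rel$ is open to promote "close to a point of $\rel(\s(x),d\pi_x,u_x)$" to "actually in $\rel(\s(x),d\pi_x,u_x)$". The subtlety is that $\rel(\s(x),d\pi_x,u_x)$ is not the full relation but the path-connected component, inside the affine slice $\{v\in T_{f_0(x)}W : (x,f_0(x),L(x)+(v-L(x)(u_x))\otimes d\pi_x)\in\rel\}$, of the point $L(x)(u_x)=(df_0)_x(u_x)=\overline\gamma(x)$. Openness of $\rel$ makes this slice an open subset of $T_{f_0(x)}W$, hence its connected components are open. Since $v\mapsto L(x)+(v-L(x)(u_x))\otimes d\pi_x$ is an affine embedding, the set $\rel(\s(x),d\pi_x,u_x)$ is open in $T_{f_0(x)}W$. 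Because the loop $t\mapsto\gamma(x,t)$ takes values in this open connected set and $(df_1)_x(u_x)$ is obtained by a small perturbation of $\gamma(x,N\pi(x))$ staying — for $N$ large — inside that same component (the $O(1/N)$ perturbation cannot jump components once it is smaller than the distance to the complement), we conclude $(df_1)_x(u_x)\in\rel(\s(x),d\pi_x,u_x)$.

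The one genuine point requiring care — and the main obstacle — is making the choice of $N$ uniform over $U$: I need a single $N$ that works for every $x$, which requires that the $O(1/N)$ constant in $(P_3')$ be bounded independently of $x$ (this is Remark~2, using $\|\partial_j\gamma\|_\infty$ over $U$) and that the distance from $\gamma(x,N\pi(x))$ to the complement of $\rel(\s(x),d\pi_x,u_x)$ be bounded below independently of $x$ and of $N\pi(x)$. The latter follows because the image of the compact loop $t\mapsto\gamma(x,t)$ is a compact subset of the open set $\rel(\s(x),d\pi_x,u_x)$, so it stays a fixed positive distance from the complement, and this distance depends continuously on $x$; if $U$ (or the relevant portion of it) is compact, taking the infimum gives a uniform $\delta>0$, and then any $N$ with $2\|\partial_j\gamma\|_\infty/N<\delta$ works. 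In the non-compact case one applies the argument chart by chart, which is exactly how the Corrugation Process is used in the sequel. This is precisely the "straightforward consequence of the relative property" the text advertises: the relative property guarantees $\overline\gamma(x)=df_0(u_x)$ sits in the same component as the whole loop, so there is no component-jumping to worry about in the limit $N\to\infty$.
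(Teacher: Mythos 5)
Your argument is correct and is essentially the one the paper leaves implicit (and spells out inside the proof of Proposition~\ref{prop_noInt}): property $(P_3')$ puts $df_1(u_x)$ within $O(1/N)$ of $\gamma(x,N\pi(x))\in\rel(\s(x),d\pi_x,u_x)$, and openness of $\rel$ together with a compactness argument over the (compact) domain gives a uniform $N$. Only your closing aside is slightly off: what prevents component-jumping is that a small ball around $\gamma(x,N\pi(x))$ is path-connected and contained in the open slice (so it lies in the same path component), not the relative (constant-loop) property, which plays no role here --- but this does not affect the validity of your proof.
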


\subsection{Surrounding differential relations}
\label{surrounding_dr}

Given a subsolution of a differential relation $\rel$ with respect to $(d\pi,u)$, the Corrugation Process builds a solution of $\rel$ in the direction $u$ providing that the loop family $\gamma$ satisfies the conditions of Lemma~\ref{gamma_dans_R}. In general finding such a loop family  is an issue which depends on the topology and the geometry of $\rel$, on the direction $u$ and on the point under consideration. In this section, we distinguish a class of differential relations (see Definition~\ref{def_surrounding}) for which the existence of such a family is uniformly given by a map $\bgamma$ depending on a base point $\sigma$ in $\rel$ and a barycentric point $w$ in $\iconv\;\rel(\sigma,d\pi_x,u_x)$.\\

\noi
We consider the bundle  $p_y^*TW$ over $\rel$ induced by the projection $p_y:\rel\rightarrow W$, $\sigma=(x,y,L)\mapsto y,$ and we define
$$\iconv(\mathcal{R},d\pi,u):=\{(\sigma,w)\in p_y^*TW\; |\; w\in \iconv\;\rel(\sigma,d\pi_x,u_x)\}.$$

\begin{defn}\label{def_surrounding}
Let $k\geq 1$ and $\mathcal{R}$ be a differential relation of $J^1(U,W)$. We say that a loop family
\begin{eqnarray*}
\begin{array}{lclc}
\bgamma : & \iconv(\mathcal{R},d\pi,u) & \stackrel{C^{k}}{\longrightarrow} & C^{k-1}(\R/\Z,TW)\\
& (\sigma,w) & \longmapsto & \bgamma (\sigma,w)(\cdot)
\end{array}
\end{eqnarray*}
is {\it surrounding with respect to} $(d\pi,u)$ if for every $(\sigma,w)$ we have
\begin{itemize}
\item[$(1)$]\ \ $t\mapsto \bgamma (\sigma,w)(t)$ is a loop in $\mathcal{R}(\sigma,d\pi_x,u_x)$,
\item[$(2)$]\ \ the average of $t\mapsto \bgamma (\sigma,w)(t)$ is $w$,
\item[$(3)$]\ \ there exists a continuous homotopy $H:\iconv(\mathcal{R},d\pi,u)\times [0,1]\rightarrow TW$ such that
$H(\sigma,w,0)=\bgamma (\sigma,w)(0)$, $H(\sigma,w,1)=L(u_x)$ and $H(\sigma,w,t)\in \mathcal{R}(\sigma,d\pi_x,u_x)$ for all $t\in\, [0,1].$
\end{itemize} 
\end{defn}

\noi
The third point insures the homotopic properties needed to glue local solutions and to state a potential $h$-principle for $\rel$. \\

\noi
Let $\rel$ be surrounding with respect to $(d\pi,u)$  and $\s$ be a $C^{k-1}$ subsolution of $\mathcal{R}$ with base map $f_0 =\mathrm{bs}\;\s$ of class $C^k$ ($k\geq 2$). Let $\bgamma :\iconv(\mathcal{R},d\pi,u)\rightarrow  C^{k-1}(\R/\Z,TW)$ be a surrounding loop family.
We define a loop family $\gamma$ by
$$ \gamma(x,t):=\bgamma(\s(x), df_0(u_x))(t)\in \rel(\sigma,d\pi_x,u_x)$$
for every $(x,t)\in U\times\R/\Z$. Note that $x\mapsto\gamma(x,\cdot)$ is $C^{k-1}$ and $t\mapsto\gamma(\cdot,t)$ is $C^{k-2}$. It ensues that the Corrugation Process $CP_{\hgamma}(f_0,\pi,N)$ is a  $C^{k-1}$ solution of $\rel$ in the direction $u$.\\

\noi
\textbf{Notation.--} If $\rel$ is surrounding with respect to $(d\pi,u)$ we denote by $CP_{\bgamma}(\s,\pi,N)$ the Corrugation Process $CP_{\hgamma}(f_0,\pi,N)$ given by the previous construction. Observe that if $\s$ and $\s'$ coincide on $x\in U$ then $CP_{\bgamma}(\s,\pi,N)(x)=CP_{\bgamma}(\s',\pi,N)(x).$

\begin{defn}\label{def_relative}
Let $\delta$>0 and $\mathcal{R}$ be a surrounding differential relation with respect to $(d\pi,u)$. We say that a loop family $\bgamma_{\delta} : \iconv(\rel,d\pi,u) \rightarrow C^k(\R/\Z,TW)$ is $\delta$-\textit{relative with respect to $(d\pi,u)$} for every $(\sigma,w) \in\iconv(\rel,d\pi,u)$ we have
\begin{eqnarray*}
L(u)=w \;\mbox{ and }\;\dist(w, \rel(\sigma,d\pi_x,u_x)^C)\geq\delta\;\;\;\Rightarrow \;\;\;\bgamma_{\delta}(\sigma,w)\equiv w,
\end{eqnarray*}
where $\rel(\sigma,d\pi_x,u_x)^C$ denotes the complement of $\rel(\sigma,d\pi_x,u_x)$ in $T_y W$. If for every $\delta\in\; ]0,\delta_0]$ (for some $\delta_0>0)$ there exists a $\delta$-relative loop family $\bgamma_{\delta}$ we say that $\bgamma:\,]0,\delta_0]\times \iconv(\rel,d\pi,u) \rightarrow C^k(\R/\Z,TW)$ is {\it relative}.
\end{defn}

\noi
By convention the distance function to an empty set is infinite, so if $\rel(\sigma,d\pi_x,u_x)^C=\emptyset$, then the above condition on $\dist(w, \rel(\sigma,d\pi_x,u_x)^C)$ is fulfilled.\\

\noi
Note that if $\delta$ is greater than the diameter of $\mathcal{R}$ then the $\delta$-relativity condition is empty. Therefore, if this diameter is not infinite, the definition is meaningful only when $\delta$ is sufficiently small.\\

\noi
If $\dist(w, \rel(\sigma,d\pi_x,u_x)^C) >0$, then the point $w$ lies inside the differential relation and is the average of the constant loop $\bgamma(\sigma,w)\equiv w$. The constant loops condition in the above definition allows to build, from a subsolution, a holonomic section such that the two base maps coincide outside a $\delta$-neighborhood of $\rel(\sigma,d\pi_x,u_x)^C$.  The following lemma is a direct consequence of the relative property of the Corrugation Process (cf. Subsection~\ref{extraProp}).

\begin{lemme}\label{lemme-relatif}
Let $\mathcal{R}$ be a surrounding open differential relation with respect to $(d\pi,u)$ and $x\mapsto \s(x)$ be a subsolution of $\rel$ for $(d\pi,u)$. Let $\bgamma$ be a $\delta$-relative loop family with respect to $(d\pi,u)$ and $f_0= \mathrm{bs}\;\s$, $f_1=CP_{\bgamma}(\s,\pi,N)$. Then 
$$ f_1(x) =f_0(x)$$
for every $ x \in U$  such that $\dist(df_0(u_x), \rel(\sigma,d\pi_x,u_x)^C) \geq \delta$.
\end{lemme}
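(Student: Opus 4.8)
The plan is to unwind the definitions and reduce the statement to the relative property of the Corrugation Process recorded in Subsection~\ref{extraProp}. Fix $x\in U$ with $\dist(df_0(u_x),\rel(\sigma,d\pi_x,u_x)^C)\ge\delta$, where as usual $\sigma=\s(x)=(x,f_0(x),L(x))$. By construction (see the Notation before Definition~\ref{def_relative}), we have $f_1(x)=CP_{\bgamma}(\s,\pi,N)(x)=CP_{\hgamma}(f_0,\pi,N)(x)$ where the loop family $\gamma$ is given by $\gamma(x,\cdot)=\bgamma(\s(x),df_0(u_x))(\cdot)=\bgamma_\delta(\sigma,w)(\cdot)$ with $w:=df_0(u_x)$. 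Since $\s$ is a subsolution with respect to $(d\pi,u)$, we have $w=df_0(u_x)\in\iconv\,\rel(\sigma,d\pi_x,u_x)$, so $(\sigma,w)$ lies in the domain $\iconv(\rel,d\pi,u)$ of $\bgamma_\delta$; and $L(u_x)=w$ holds automatically since $L(x)=df_0(x)$ is the $1$-jet part of the holonomic-looking triple whose base map is $f_0$ — more precisely, $\s(x)=(x,f_0(x),L(x))$ and we must be careful here: a subsolution need not be holonomic, so $L(u_x)$ need not equal $df_0(u_x)$ in general. This is the one point that needs attention.

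The resolution is that the $\delta$-relativity condition of Definition~\ref{def_relative} has \emph{two} hypotheses, $L(u)=w$ \emph{and} $\dist(w,\rel(\sigma,d\pi_x,u_x)^C)\ge\delta$, and the conclusion of Lemma~\ref{lemme-relatif} should be read accordingly: at a point $x$ where $\dist(df_0(u_x),\rel(\sigma,d\pi_x,u_x)^C)\ge\delta$ one applies $\delta$-relativity with $w=df_0(u_x)$, and for the construction $\gamma(x,\cdot)=\bgamma_\delta(\s(x),df_0(u_x))(\cdot)$ the base-point slot is filled by $\sigma=\s(x)$ while the barycenter slot is filled by $w=df_0(u_x)$; the hypothesis $L(u)=w$ in Definition~\ref{def_relative} is what singles out, among all $(\sigma,w)$, those that actually arise from a subsolution in a way compatible with holonomy, and in the statement of Lemma~\ref{lemme-relatif} it is implicitly part of the hypothesis (the lemma is about the situation where $\s$ is already "short in the $u$ direction with slack $\delta$", i.e. $df_0(u_x)$ sits deep inside the relation and equals $L(u_x)$). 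Thus for such $x$ the $\delta$-relativity condition applies and yields that $t\mapsto\gamma(x,t)=\bgamma_\delta(\sigma,w)(t)$ is the constant loop $\equiv w$.

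With $t\mapsto\gamma(x,t)$ constant, I invoke the \textbf{Relative property} of the Corrugation Process stated in Subsection~\ref{extraProp}: if $t\mapsto\gamma(x,t)$ is constant then $CP_{\hgamma}(f_0,\pi,N)(x)=f_0(x)$. Concretely this is immediate from Formula~\eqref{formula_CP} (or its coordinate-free form): since $\gamma(x,\cdot)$ is constant, $\overline{\gamma}(x)=\gamma(x,0)$ and the integrand $\gamma(x,s)-\overline{\gamma}(x)$ vanishes identically, so $\Gamma(x,Nx_j)=0$ (resp. $\exp_{f_0(x)}0=f_0(x)$). Hence $f_1(x)=f_0(x)$, which is exactly the claimed conclusion. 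Note that the openness of $\rel$ and the "$N$ large enough" caveat are not needed for this particular point: the identity $f_1(x)=f_0(x)$ is exact, not asymptotic, at every $x$ in the $\delta$-deep set.

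The only genuine subtlety — and the step I would flag as the main obstacle — is the bookkeeping in the previous paragraph: making sure that in the pairing $\gamma(x,\cdot)=\bgamma(\s(x),df_0(u_x))(\cdot)$ the two arguments fed to $\bgamma_\delta$ really do satisfy both premises ($L(u)=w$ and the distance bound) of the $\delta$-relativity implication, given only that $\s$ is a subsolution and that $x$ lies in the $\delta$-deep set. Everything else is a one-line substitution into Formula~\eqref{formula_CP}. I would therefore spend the bulk of the written proof carefully identifying $\sigma$, $w$, $L(u_x)$ and checking the hypotheses of Definition~\ref{def_relative}, and then conclude in a sentence by the Relative property.
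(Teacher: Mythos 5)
Your proof is correct and takes essentially the same route as the paper, whose own justification is the one-line observation that $\delta$-relativity makes the loop $t\mapsto\gamma(x,t)=\bgamma(\s(x),df_0(u_x))(t)$ constant at the $\delta$-deep points, so the relative property of the Corrugation Process (Subsection~\ref{extraProp}) gives $f_1(x)=f_0(x)$ exactly. The subtlety you rightly flag — that Definition~\ref{def_relative} also demands the premise $L(u_x)=df_0(u_x)$, which the lemma's hypotheses do not state explicitly since a subsolution need not be holonomic — is an imprecision in the statement rather than a defect of your argument; the paper's proof does not address it either, and in the paper's uses of the lemma the section coincides with $j^1f_0$ (so $L(u_x)=df_0(u_x)$) at the points where the distance bound is invoked.
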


\subsection{Shaped loop families and Kuiper relations}\label{Kuiper-rel}

\noi
The {\it Integral Representation Theorem} of Convex Integration Theory states that, given a subsolution $\s:[0,1]^m\rightarrow\rel$ and a direction $\partial_j$, it is possible to build a loop family $\gamma:[0,1]^m\times\R/\Z\rightarrow\rel$ such that, for each point $x\in\;[0,1]^m$, the average of $t\mapsto\gamma(x,t)$ is $\dj f_0(x).$ This Representation Theorem is a key ingredient since the Convex Integration formula lies on it (see~\ref{defCI}). In a surrounding relation, this approach can be reversed. The family of loops $\bgamma$ is defined on the far larger space $\iconv(\mathcal{R},d\pi,u)$ and uniformly generates a $\gamma$ for every subsolution $\s$ with respect to $(d\pi,u)$. This procedure allows to simplify the expression of the Corrugation Process if the family of loops $\bgamma$ is $c$-shaped, i.e. built from a single closed curve $c:\R/\Z\rightarrow\R^p$ seen as a pattern.

\begin{defn}\label{def_c}
Let $p,q>0$ be two natural numbers and $A\subset \R^q$ be a parameter space.
A {\it loop pattern} is a map $c:A\times \R /\Z\rightarrow\R^p$ such that $a\mapsto c(a,\cdot)$ is $C^{k-1}$ and $t\mapsto c(\cdot,t)$ is $C^{k-2}$ for some $k\geq 2$.
\end{defn}

\noindent
\textbf{Notation.--}
If $c$ is a loop pattern, we set
\begin{eqnarray*}
C(a,t):=\int_{u=0}^t \Big( c(a,u)-\overline{c}(a) \Big) du.
\end{eqnarray*}
As the loop $t\mapsto c(a,t)$ is $1$-periodic and $\overline{c}$ its average, the map $t\mapsto C(a,t)$ is $1$-periodic.
We denote by $(C_1(a,\cdot), \ldots,C_{p}(a,\cdot))$ the components of $C(a,\cdot)$.\\

\noi
We denote by $E\rightarrow W$ the fiber bundle over $W$ with fiber $\mathcal{L}(\R^p,T_yW)=(\R^p)^*\otimes T_yW$ and we consider its pull back by the projection $q:\iconv(\rel,d\pi,u)\rightarrow W$, $(\sigma,w)\mapsto y$. A section $\mathcal{e}$ of $q^*E$ defines a family of linear maps $\mathcal{e}(\sigma,w):\R^p\rightarrow T_yW$.

\begin{defn}\label{def_gamma_c-shaped}
Let $\mathcal{R}$ be a surrounding differential relation with respect to $(d\pi,u)$ and let $c:A\times \R / \Z \rightarrow \R^p$ be a loop pattern for some $k\geq 2$. A surrounding loop family $\bgamma: \iconv(\mathcal{R},d\pi,u) \rightarrow C^{k-2}(\R/\Z,TW)$ is said to be \textit{$c$-shaped} if there exist a section $\mathcal{
e}$ of $q^*E\rightarrow \iconv(\mathcal{R},d\pi,u)$ and a map
${\bf a}:\iconv(\mathcal{R},d\pi,u)\rightarrow A$ such that 
\begin{eqnarray*}
\bgamma(\sigma,w)(t) = \dis \mathcal{
e}(\sigma,w)\circ c({\bf a}(\sigma,w),t)
\end{eqnarray*}
for all $((\sigma,w),t)\in \iconv(\mathcal{R},d\pi,u)\times \R/\Z.$\\

\noi
\textbf{Notation.--} If $(c_1,\ldots,c_p)$ denote the components of $c$ in the standard basis of $\R^p$ and if  ${\bf e}_1, \ldots, {\bf e}_p $ denote the image of this basis by $\mathcal{e}$, we write
\begin{eqnarray*}
\bgamma(\sigma,w)(t) =c({\bf a}(\sigma,w),t) \cdot {\bf e}(\sigma,w)= \dis \sum_{i=1}^p c_i({\bf a}(\sigma,w),t)\, {\bf e}_i(\sigma,w).
\end{eqnarray*}
\end{defn}

\begin{defn}\label{def_R_Kuiper} Let $c$ be a loop pattern. We say that $\mathcal{R}$ is a {\it Kuiper relation with respect to $(c,d\pi,u)$} if there exists a surrounding loop family $\bgamma$ which is $c$-shaped. Moreover, if $\bgamma$ is relative we say that $\mathcal{R}$ is a {\it relative Kuiper relation}.
\end{defn}

\subsection{Corrugation Process with no integration}

\begin{prop}\label{prop_noInt}
Let $c$ be a loop pattern, $\mathcal{R}$ be an open Kuiper relation with respect to $(c,d\pi,u)$, $\mathfrak{S}=(x,f_0,L_0)$ be a subsolution and $\bgamma$ be a $c$-shaped surrounding loop family. Then $f_1 = CP_{\bgamma}(\mathfrak{S},\pi,N)$ has the following analytic expression
\begin{eqnarray*}
f_1(x) &=&  \exp_{f_0(x)}\left(\frac{1}{N} C( a(x),N\pi(x) )\cdot e(x)\right)
\end{eqnarray*}
where $a(x) := \textbf{a}(\s(x), df_0(u_x))$, $e(x) := \textbf{e}(\s(x), df_0(u_x))$ and $x\in U$. Moreover, if $N$ is large enough, the section
$$
x\mapsto \s_1:=(x,f_1,L_1 = L_0+(df_1(u_x)-L_0(u_x))\otimes d\pi)
$$
is a section in $\rel$. If $\rel$ is $\delta$-relative then
$\s_1(x) =\s_0(x)$
for every $x \in U$  such that $\dist(df_0(u_x), \rel(\sigma,d\pi_x,u_x)^C) \geq \delta$.
\end{prop}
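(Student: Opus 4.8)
The plan is to simply unwind the definitions. Starting from the coordinate-free expression of the Corrugation Process given in Subsection~\ref{extraProp}, namely
$$f_1=\CP(f_0,\pi,N):x\mapsto \exp_{f_0(x)}\frac{1}{N}\int_0^{N\pi(x)}\big(\gamma(x,t)-\overline{\gamma}(x)\big)\,dt,$$
the first step is to recall that the notation $CP_{\bgamma}(\s,\pi,N)$ means precisely $\CP(f_0,\pi,N)$ with the particular loop family $\gamma(x,t):=\bgamma(\s(x),df_0(u_x))(t)$. Since $\rel$ is Kuiper with respect to $(c,d\pi,u)$, the chosen $\bgamma$ is $c$-shaped, so Definition~\ref{def_gamma_c-shaped} gives $\bgamma(\sigma,w)(t)=\mathcal{e}(\sigma,w)\circ c({\bf a}(\sigma,w),t)$. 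Substituting $\sigma=\s(x)$, $w=df_0(u_x)$ and writing $a(x):={\bf a}(\s(x),df_0(u_x))$, $e(x):={\bf e}(\s(x),df_0(u_x))$ yields $\gamma(x,t)=e(x)\cdot c(a(x),t)=\sum_{i=1}^p c_i(a(x),t)\,e_i(x)$.

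The second step is to push this through the integral. Because $e(x)$ is a linear map independent of $t$, it commutes with the integration in $t$ and with subtraction of the $t$-average, so
$$\int_0^{N\pi(x)}\big(\gamma(x,t)-\overline{\gamma}(x)\big)\,dt = e(x)\cdot\int_0^{N\pi(x)}\big(c(a(x),t)-\overline{c}(a(x))\big)\,dt = e(x)\cdot C(a(x),N\pi(x)),$$
using the Notation following Definition~\ref{def_c} for $C$. Plugging this back into the exponential formula gives the claimed expression $f_1(x)=\exp_{f_0(x)}\big(\tfrac{1}{N}C(a(x),N\pi(x))\cdot e(x)\big)$.

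For the remaining two assertions, the point is that they are restatements of results already proved in the excerpt, specialized to this situation. The claim that for $N$ large the section $x\mapsto\s_1=(x,f_1,L_1)$ with $L_1=L_0+(df_1(u_x)-L_0(u_x))\otimes d\pi$ lands in $\rel$ follows from Lemma~\ref{gamma_dans_R}: by surroundingness the chosen $\gamma$ satisfies $\gamma(x,t)\in\rel(\s(x),d\pi_x,u_x)$ and the Average Constraint $\overline{\gamma}(x)=df_0(u_x)$ (Definition~\ref{def_surrounding}(1)--(2)), hence for $N$ large $df_1(u_x)\in\rel(\s(x),d\pi_x,u_x)$; one then checks that $L_1$ agrees with $L_0$ on $\ker d\pi_x$ and sends $u_x$ to $df_1(u_x)$, which by the very definition of $\rel(\sigma,\lambda,u)$ means $\s_1(x)\in\rel$. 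Finally, when $\rel$ is $\delta$-relative, Lemma~\ref{lemme-relatif} gives $f_1(x)=f_0(x)$ whenever $\dist(df_0(u_x),\rel(\sigma,d\pi_x,u_x)^C)\geq\delta$; on such $x$ one also has $df_1(u_x)=df_0(u_x)$ (either because the relevant loop is constant, so $\Gamma(x,\cdot)\equiv 0$, or directly from the local formula), and therefore $L_1=L_0$, giving $\s_1(x)=\s_0(x)$.

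I do not expect any genuine obstacle here: the statement is essentially a bookkeeping corollary that combines the coordinate-free Corrugation formula, the definition of a $c$-shaped surrounding family, and Lemmas~\ref{gamma_dans_R} and~\ref{lemme-relatif}. The only mildly delicate point is the verification that the $1$-jet $\s_1$ really sits in $\rel$ rather than merely having its $u$-derivative in the right fiber — i.e.\ correctly matching the definition $\rel(\sigma,\lambda,u)=Conn_{L(u)}\{v\mid(x,y,L+(v-L(u))\otimes\lambda)\in\rel\}$ with the formula for $L_1$ — but this is immediate from that definition. One should also be slightly careful that the regularity hypotheses ($f_0$ of class $C^k$, $\bgamma$ inducing a $C^{k-1}$ loop family $\gamma$ with $x\mapsto\gamma(x,\cdot)$ of class $C^{k-1}$ and $t\mapsto\gamma(\cdot,t)$ of class $C^{k-2}$) are exactly those under which Proposition~\ref{propCP} applies, so that $f_1$ is $C^{k-1}$ and $(P_1)$--$(P_3')$ hold; this has already been observed in Subsection~\ref{surrounding_dr}.
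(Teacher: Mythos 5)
Your derivation of the analytic expression is exactly the paper's: substitute $\gamma(x,t)=\bgamma(\s(x),df_0(u_x))(t)=c(a(x),t)\cdot e(x)$ into the coordinate-free Corrugation Process formula of Subsection~\ref{extraProp} and pull the ($t$-independent) linear map $e(x)$ out of the time integral to get $C(a(x),N\pi(x))\cdot e(x)$. The genuine issue is in the second assertion, precisely at the point you flagged and then dismissed as ``immediate from the definition''. Lemma~\ref{gamma_dans_R} (equivalently $(P_3')$ plus openness) gives $df_1(u_x)\in\rel(\s(x),d\pi_x,u_x)$, and by the definition of the slice this means that the jet $(x,\,f_0(x),\,L_0+(df_1(u_x)-L_0(u_x))\otimes d\pi)$ lies in $\rel$ --- note that the base point here is $f_0(x)$, because the slice is taken at $\sigma=\s(x)=(x,f_0(x),L_0)$. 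The section $\s_1$ has base point $f_1(x)$, so one more step is required: since $\|f_1-f_0\|_{\infty}=O(1/N)$ by property $(P_1)$ of Proposition~\ref{propCP} and $\rel$ is open, replacing the base point $f_0(x)$ by $f_1(x)$ keeps the jet inside $\rel$ for $N$ large. This is exactly how the paper concludes (it invokes $(P_1)$ after $(P_3')$); your argument as written omits this base-point correction.

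A smaller caveat concerns the relative statement: you claim $df_1(u_x)=df_0(u_x)$ exactly because the loop at $x$ is constant. Constancy of $\gamma(x,\cdot)$ makes $\Gamma(x,\cdot)\equiv 0$ and hence kills the $\partial_t\Gamma$ contribution to $df_1(u_x)$, but the term $\frac{1}{N}\,\partial_x\Gamma$ need not vanish unless the loop family is constant on a neighbourhood of $x$; in general one only gets $df_1(u_x)=df_0(u_x)+O(1/N)$, hence $L_1=L_0$ only up to $O(1/N)$. The paper's own proof of this last point records only $f_1(x)=f_0(x)$ via Lemma~\ref{lemme-relatif}, so you are attempting more than the paper does, but the justification you give is not sufficient as stated.
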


\begin{proof}
By the non-coordinate expression of the Corrugation Process of Subsection~\ref{extraProp}, we have
\begin{eqnarray*}
\forall x\in U, \,\,\, f_1(x) = \exp_{f_0(x)}\frac{1}{N}\int_0^{N\pi(x)}(\gamma(x,t)-\overline{\gamma}(x))dt
\end{eqnarray*}
The loop family $\gamma$ is given by $\gamma(x,t):= \bgamma (\s(x),d f_0(u_x)) (t)$  and since the map $\bgamma$ is $c$-shaped and surrounding, we have
\begin{eqnarray*}
\gamma(x,t) := c(a(x),t)\cdot e(x)
\end{eqnarray*}
with $\overline{\gamma}(x) = df_0(u_x)$. Then
\begin{eqnarray*}
\int_{s=0}^t \gamma(x,s) -\overline{\gamma}(x)ds
= \left( \int_{s=0}^t c(a(x),s) - \overline{c}(a(x))ds \right) \cdot e(x)= C(a(x),t) \cdot e(x)
\end{eqnarray*}
It ensues that, for all $x\in U$, we have
$$f_1(x)=\exp_{f_0(x)}\left(\frac{1}{N} C( a(x),N\pi(x) )\cdot e(x)\right).$$
According to property $(P_3')$ of proposition~\ref{propCP}, for every $x\in U$, we have 
$$d_{TW}\Big((df_1)_x(u), \gamma(x,N\pi(x))\Big) = O(1/N).$$
Since $\rel$ is open and $\gamma(\cdot,\cdot)\in\rel(\s(x),d\pi_x, u_x)$, if $N$ is large enough, we have
$$x\mapsto (x,f_0(x),L_0+(df_1(u_x)-L_0(u_x))\otimes d\pi)\in \rel$$
for all $x\in U$. According to property $(P_1)$ of the same proposition, we also have
$$x\mapsto \s_1(x)=(x,f_1(x),L_0+(df_1(u_x)-L_0(u_x))\otimes d\pi)\in \rel.$$
If $\mathcal{R}$ is relative then the equality $f_1(x)=f_0(x)$ holds for all $x$ such that $\dist(df_0(u_x), \rel(\sigma,d\pi_x,u_x)^C) \geq \delta$ (cf Lemma~\ref{lemme-relatif}).
\end{proof}

\subsection{Connection with the Nash-Kuiper approach}
\label{Nash-Kuiper-historique}

\noi
The Nash-Kuiper $C^1$-Embedding Theorem was a prominent inspiration for the conception of Convex Integration Theory. The proof of this theorem relies on the resolution of a series of differential relations to construct a sequence of embeddings converging toward a $C^1$-isometric embedding. Each embedding differs from the previous one by a deformation localized on a chart $(U,\varphi=(\varphi^1,\dots,\varphi^m) )$ of $M$, $\dim\; M=m$. The differential problem to be solved is to build a $\epsilon$-isometric embedding $f_{\epsilon}:(U,\mu)\rightarrow \E^n$, $n>m$, where 
\begin{eqnarray*}
\mu := f^* \langle \cdot, \cdot \rangle+ \rho\, \ell \otimes \ell
\end{eqnarray*}
and $f$ is the previous map, $\rho:U\rightarrow\R_{>0}$ is a given map and $\ell=\sum_{i=1}^m c_id\varphi^i$, $c_i\in\R$, is a given linear form which is constant on the coordinate system (more details in section~\ref{sectionAppl2}). The differential relation under consideration is the following one
$$\is(\epsilon,\mu):=\{(x,y,L)\;|\; \|L^*\langle\cdot, \cdot \rangle-\mu_x\|<\epsilon\}$$
where $\|\cdot\|$ is the norm on bilinear forms induced by $\mu.$ In~\cite{Nash54} this differential relation is solved under the assumption that the codimension is greater than 2 by a spiraling perturbation of $f$ (the {\it step device}). In \cite{Kuiper55}, it is approximately solved in codimension one by a sinusoidal deformation (the {\it strain}). In~\cite{Lellis-Szeke,CdLdR}, this sinusoidal deformation is modified to obtain a true solution of $\is(\epsilon,\mu)$ (the {\it ansatz}). We show in the following how to solve $\is(\epsilon,\mu)$ by a single Corrugation Process in a direction $u$ dual to $\ell$. The resulting formula is similar to the ansatz and will be used in the proof of Theorem~\ref{th2} in section~\ref{sectionAppl2}.\\

\begin{figure}[!ht]
\centering
\includegraphics[scale=0.45]{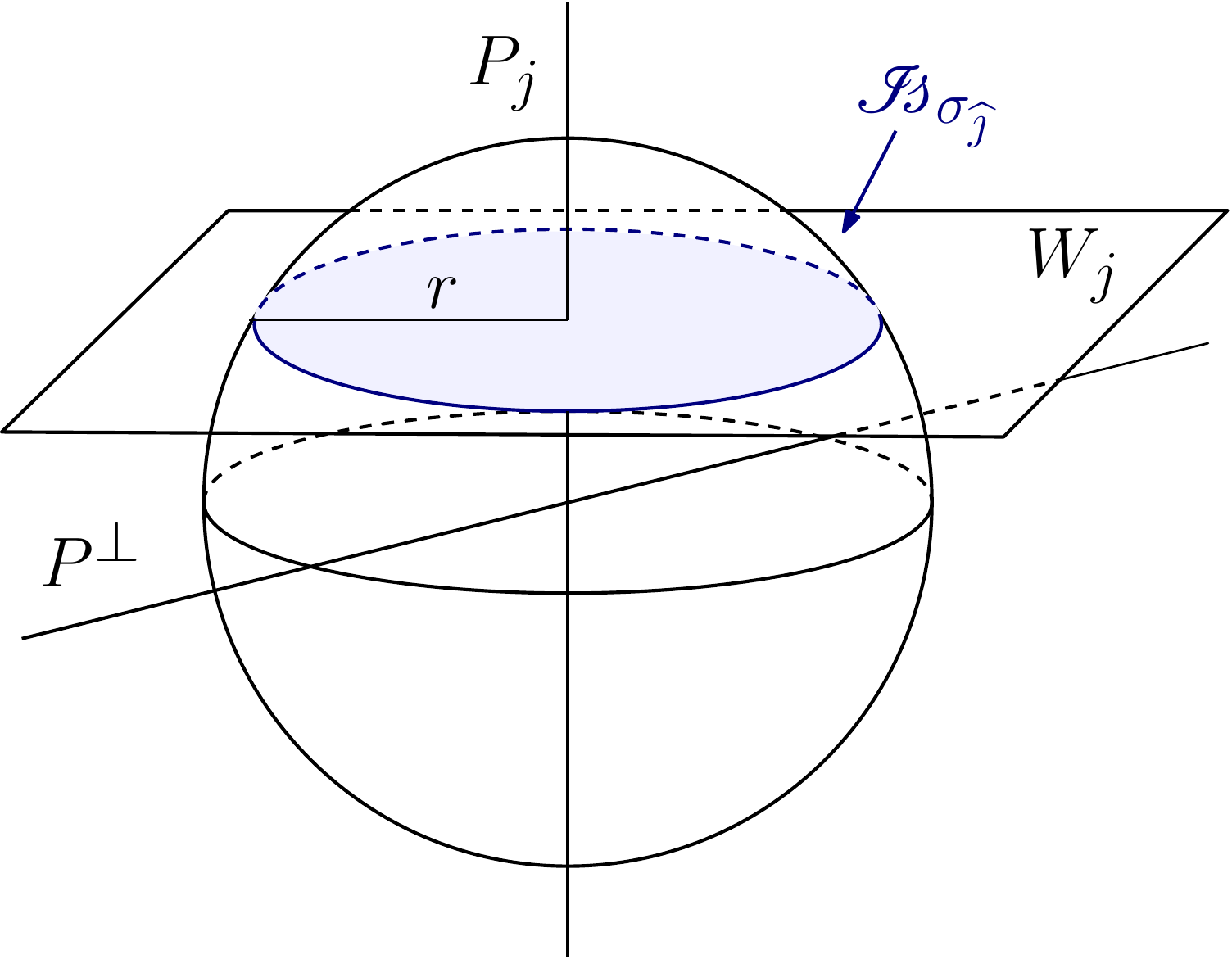}
\includegraphics[scale=0.7]{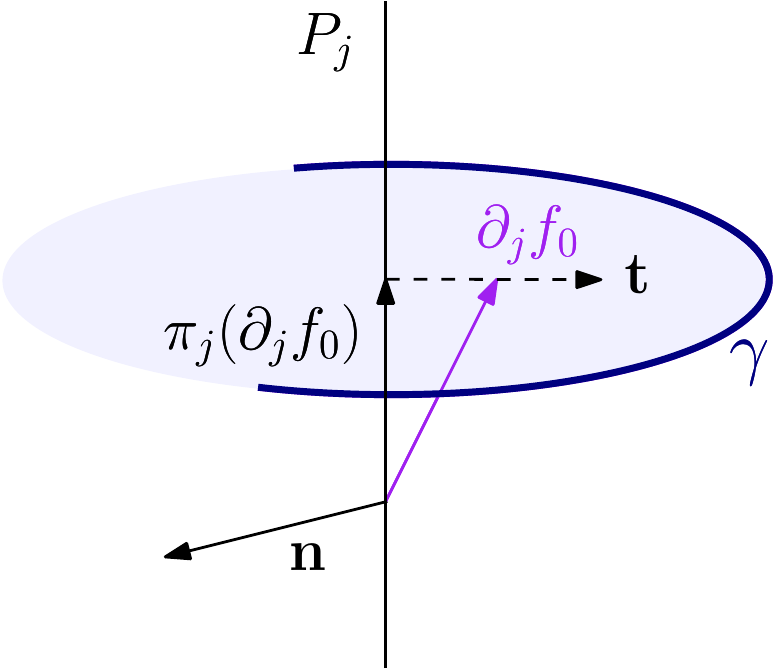}
\caption{\footnotesize{ \textbf{Left:} The slice of $\is(0,\mu)$ and its Convex hull, \textbf{Right:} the loop $\gamma$.}}
\label{image_relation_isometrie}
\end{figure}

\noi
For short we write $\rel$ instead of $\is(0,\mu)$. The space $\rel(\sigma,\ell,u)$ is described in \cite{Gromov-book} p. 202 and \cite{Spring-book} p. 194. It is the intersection of the $(n-1)$-dimensional sphere of radius $\sqrt{\mu(u, u)}$ with the affine $(n-m+1)$-plane 
\begin{eqnarray*}
\Pi:=\{ v \in \R^n \;|\;\langle v, L(u_0) \rangle = \mu_x (u,u_0) \; \forall u_0\in \ker \ell\}
\end{eqnarray*}
where as usual $\sigma=(x,y,L)$. It ensues that $\rel(\sigma,\ell,u)$ is a $(n-m)$-dimensional sphere of $\R^n$. Let $P:=L(\ker\ell)$. For every $v\in\R^n$ we write $v=[v]^P+[v]^{P^{\perp}}$ the decomposition of $v$ in $P\oplus P^{\perp}.$ \\ 

\noi
\textbf{The subsolution.--}  We consider sections $x\mapsto\s(x)=(x,f(x),L_x)$ of the form
$$L_x:=df_x+(v(x)- df_x(u))\otimes\ell$$ 
where $f$ is an embedding and $v:U\rightarrow \E^n$ is to be determined so that $\s\in\Gamma(\rel)$. By the very definition of $L$ we have $(L_x^*\langle\cdot, \cdot\rangle)(u_1,u_2)=\mu_x(u_1,u_2)$ for every couple $(u_1,u_2)\in \ker\ell\times \ker \ell$. If $u_1\in \ker \ell$, we also have $(L_x^*\langle\cdot, \cdot\rangle)(u_1,u)=\mu_x(u_1,u)$ if and only if $[v(x)]^P = [df_x(u)]^P$, and $(L_x^*\langle\cdot, \cdot\rangle)(u,u)=\mu_x(u,u)$ if and only if $\|v(x)\|^2=\mu_x(u,u).$ We choose
$$v(x):=[df_x(u)]^P+r(x){\bf t}(x)$$
with $\textbf{t} =\frac{[df(u)]^{P^{\perp}}}{\|[df(u)]^{P^{\perp}} \|}$ and $r= \sqrt{\mu(u,u) - \|[df(u)]^P\|^2}$. Observe that $P=L_x(\ker \ell)=df_x(\ker \ell)$ does not depend on $v$ and that $[df(u)]^{P^{\perp}}$ never vanishes since $f$ is an embedding. We also have
$\mu(u,u)=\|df(u)\|^2+\rho\geq \|[df(u)]^P\|^2$. Thus $v$ is well defined. For such a choice of $v$, it is readily checked that $\s(x)\in\rel$ and $df_x(u)\in \iconv\;\rel(\s(x),\ell,u)$ for every $x\in U.$ \\

\noi
\textbf{The loop family.--} We consider the following loop family
\begin{eqnarray}\label{loopIs}
\gamma(\cdot,t) &:=& r\cos(\alpha \cos(2\pi t))\textbf{t}
+ r\sin(\alpha \cos(2\pi t))\textbf{n} + [df(u)]^P 
\end{eqnarray}
where $\textbf{n}$ is any unit normal to $f$ over $U$. As $$\int_0^1\cos(\alpha \cos(2\pi t))dt=J_0(\alpha),\;\;\int_0^1\sin(\alpha \cos(2\pi t))dt=0$$
we choose 
$$\alpha := J_0^{-1} \left( \frac{\|[df(u)]^{P^{\perp}}\|}{r} \right)$$
to insure $\overline{\gamma}(x) =df_x(u_x)$.
Note that the argument of $J_0^{-1}$ lies between 0 and 1 because $$\|[df(u)]^{P^{\perp}}\|^2=\|df(u)\|^2-\|[df(u)]^P\|^2\leq \mu(u,u)-\|[df(u)]^P\|^2=r^2.$$

\noi
\textbf{Analytic expression of the Corrugation Process} Let $\pi=\sum_{i=1}^m c_id\varphi^i$ so that $d\pi=\ell.$ It ensues from our choice of $\gamma$ the following lemma.

\begin{prop}\label{kuiper_formula_is}
Let $f:(U,\mu)\rightarrow\E^n$ be an immersion and $\textbf{n}:U\rightarrow\E^n$ be a normal vector field to $f$. If $N$ is large enough, the Corrugation Process 
\begin{eqnarray*}
CP_{\hgamma}(f,\pi, N)= f + \frac{r}{N}K_c(\alpha,N\pi)\textbf{t} +\frac{r}{N}K_s(\alpha,N\pi) \textbf{n}
\end{eqnarray*}
defines an $\epsilon$-isometric immersion ($K_c$ and $K_s$ are given in the introduction).
\end{prop}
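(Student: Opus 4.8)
The plan is to recognize that the loop family $\gamma$ of Equation~\eqref{loopIs} is exactly a $c$-shaped family for the arc-of-circle pattern $c$ of Equation~\eqref{pattern_arc_de_cercle}, so that the conclusion follows by specializing the integral-free formula of Proposition~\ref{prop_noInt} together with Lemma~\ref{gamma_dans_R}. First I would verify the hypotheses: $f$ is an immersion, hence $[df(u)]^{P^\perp}$ never vanishes, the unit vectors $\textbf{t}$, $\textbf{n}$ are well-defined, the radius $r=\sqrt{\mu(u,u)-\|[df(u)]^P\|^2}$ is real and positive (using $\mu(u,u)=\|df(u)\|^2+\rho>\|[df(u)]^P\|^2$), and the argument $\|[df(u)]^{P^\perp}\|/r$ of $J_0^{-1}$ lies in $[0,1)$, as checked just above the statement; this guarantees $\alpha\in[0,\alpha_0)$ is well-defined and that the Average Constraint $\overline{\gamma}(x)=df_x(u_x)$ holds.

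Next I would carry out the computation of the Corrugation Process $CP_{\hgamma}(f,\pi,N)$. By Definition~\ref{def1} applied with the coordinate-free expression (Subsection~\ref{extraProp}, with $W=\E^n$ so that $\exp_{f(x)}y=f(x)+y$ and $d\pi=\ell$), we have
\begin{eqnarray*}
CP_{\hgamma}(f,\pi,N)(x)=f(x)+\frac1N\int_{s=0}^{N\pi(x)}\big(\gamma(x,s)-\overline{\gamma}(x)\big)\,ds.
\end{eqnarray*}
Substituting~\eqref{loopIs} and $\overline{\gamma}(x)=[df(u)]^P+rJ_0(\alpha)\textbf{t}$, the term $[df(u)]^P$ cancels and the integrand splits along the two fixed directions $\textbf{t}$ and $\textbf{n}$, leaving
\begin{eqnarray*}
r\,\textbf{t}\int_{s=0}^{N\pi(x)}\!\!\big(\cos(\alpha\cos 2\pi s)-J_0(\alpha)\big)ds + r\,\textbf{n}\int_{s=0}^{N\pi(x)}\!\!\sin(\alpha\cos 2\pi s)\,ds,
\end{eqnarray*}
which are precisely $rK_c(\alpha,N\pi(x))$ and $rK_s(\alpha,N\pi(x))$ by the definitions of $K_c$ and $K_s$ from the introduction. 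This yields the displayed formula. (Alternatively one invokes Proposition~\ref{prop_noInt} directly, reading off $c$-shape from~\eqref{loopIs}: $\textbf{a}=\alpha$, $\textbf{e}_1=r\textbf{t}$, $\textbf{e}_2=r\textbf{n}$, $\textbf{e}_3$ irrelevant since $c_3\equiv 1$ contributes nothing after subtracting the mean.)

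Finally I would argue that for $N$ large the resulting map is an $\epsilon$-isometric immersion, i.e. a solution of $\is(\epsilon,\mu)$. By construction $\s=(x,f,L_x)$ is a subsolution of $\rel=\is(0,\mu)$ with respect to $(\ell,u)$ and $\gamma(x,t)\in\rel(\s(x),\ell,u)$ for all $t$ (it lies on the $(n-m)$-sphere cut out by $\Pi$, by the computation preceding~\eqref{loopIs}). Hence Lemma~\ref{gamma_dans_R} (applied to the open relation $\is(\epsilon,\mu)$, which contains $\rel$ in its closure) gives, for $N$ large enough, $d(CP_{\hgamma}(f,\pi,N))(u_x)\in\rel(\s(x),\ell,u)$, while properties $(P_1)$ and $(P_2)$ of Proposition~\ref{propCP} control the $C^0$-distance and the derivatives transverse to $u$; combining these, the pulled-back metric of $CP_{\hgamma}(f,\pi,N)$ differs from $\mu$ by $O(1/N)$ in the norm induced by $\mu$, so it lies in $\is(\epsilon,\mu)$ once $N$ is large. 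The map is an immersion for the same reason: being $C^1$-close in the relevant sense to a map whose $u$-derivative has the prescribed length and whose transverse derivatives approximate those of the immersion $f$, its differential stays of maximal rank. The only genuinely delicate point is the last one — checking that the $O(1/N)$ errors in all directions (including the cross terms $\mu(u,u_0)$, $u_0\in\ker\ell$, and the $u$-$u$ component) are simultaneously small in the $\mu$-norm on bilinear forms — but this is exactly the standard estimate underlying Lemma~\ref{gamma_dans_R} and requires only the explicit bounds $K_{P_1},K_{P_2},K_{P_3'}$ of Remark 2 after Proposition~\ref{propCP}.
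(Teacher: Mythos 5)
Your proposal is correct and follows essentially the same route as the paper, which gives no separate proof: the displayed formula is just the integration of the loop family \eqref{loopIs} (whose average is $df(u)$ by the choice of $\alpha$) in the Corrugation Process formula, and the $\epsilon$-isometry for large $N$ follows from $(P_2)$, $(P_3')$ and the fact that $\gamma$ takes values in the slice of $\is(0,\mu)$. Your extra verification that the pulled-back metric is $\mu+O(1/N)$ in all components simply makes explicit what the paper leaves implicit in the sentence ``It ensues from our choice of $\gamma$.''
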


\section{Proof of Theorem~\ref{th1}}\label{proof_th1}
\subsection{Proof of point 1 (codimension 1 immersions)}\label{codim1}

We put $\im = \im(M,W)$, $m=\dim M$ and $n=\dim W =m+1$. We recall that the space $\im$ for codimension $1$ immersions is open, ample and given by 
$$\im:=\{(x,y,L) \; |\; \mbox{rank}\; L =m\}\subset J^1(M,W).$$
Let $\sigma=(x,y,L)\in \im$. For $\lambda\in T_x^*M$ and $u\in T_xM$ such that $\lambda(u)=1$, we define the slice
$$ \im(\sigma,\lambda,u) := Conn_{L(u)}\{ v\in T_yW\; |\; (x,y,L_v)\in\im\}$$
where $L_v:=L+(v-L(u))\otimes\lambda$. It is readily seen that $L_v(T_xM)=L(\ker\lambda) + \R v$ and thus $\im(\sigma,\lambda,u)$ is the complementary of $P:=L(\ker\lambda)$ in $T_yW$. Note that as $n>m$, the convex hull $\iconv \, \im(\sigma,\lambda,u)$ is the whole space $T_yW$, so each formal solution $\sigma$ is a subsolution. Let $\pi:U\subset M \rightarrow\R$ be a submersion and $u:U\rightarrow TM$ be such that $d\pi_x(u_x)=1$.\\

\noi
\textbf{Support plane of the loop family.--} Let $(\sigma,w)\in \iconv\;\im(\sigma,d\pi_x,u_x)$. Since $\sigma=(x,y,L)$ is in the relation $\im$, the rank of $L$ is maximal, then 
$$P := L(\ker d\pi_x)$$
is a $(m-1)$-dimensional vector subspace of $T_{y}W$, thus $P$ is of codimension $2$ and $L(T_x M)$ is of codimension $1$. Observe that $L(T_xM)^{\perp}$ inherits an orientation from the one of $L(T_xM)$ and $T_yW$. Let $\nu$ be the unique unit vector of $L(T_xM)^{\perp}$ inducing its orientation. Let $\Pi$ be the subspace spanned by $L(u)$ and $\nu$, we have $P\oplus\Pi = T_y W$. We will build a loop family $\bgamma$ in the affine space $w+\Pi$.\\

\noi
\textbf{Expression of the loop family.--} Let $(\sigma,w)\mapsto\mathcal{e}(\sigma,w)$ be the section of $q^*E \rightarrow \iconv(\im, d\pi,u)$ defined by
\begin{eqnarray*}
\begin{array}{llll}
\mathcal{e}(\sigma,w): & \R^3 & \longrightarrow & T_y W \\
& e_1 & \longmapsto & \textbf{r} L(u) /\|L(u)\|\\
& e_2 & \longmapsto & \textbf{r}\nu\\
& e_3 & \longmapsto & w
\end{array}
\end{eqnarray*}
and where ${\bf r} ={\bf r}(\sigma,w)$ is any $C^{k-1}$ map strictly greater than the distance between $w$ and its $P$-component $[w]^P = P\cap (w+\Pi)$, in particular  ${\bf r}$ never vanishes. 
This condition on ${\bf r}$ ensures that the circle $C_0$ of center $w$ and of radius ${\bf r}$ parametrized by 
$$t \mapsto \cos(2\pi t) {\bf e}_1 + \sin(2\pi t) {\bf e}_2 + {\bf e}_3$$
with ${\bf e}_i:=\mathcal{e}(\sigma,w)(e_i)$ does not intersect the vector space $P$ (see Figure~\ref{image_cercle_immersion}).
\begin{figure}[!ht]
\centering
\includegraphics[scale=0.85]{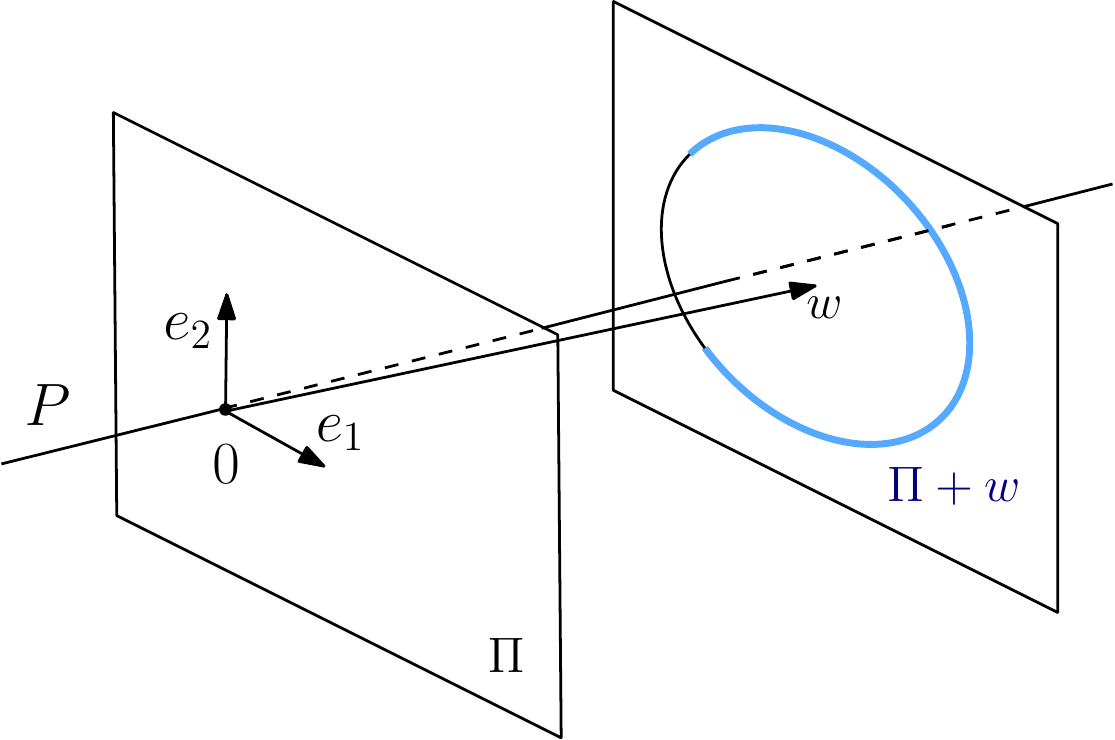}
\caption{\footnotesize{\textbf{Proof of Point 1 Theorem \protect \ref{th1}:} in blue the arc of circle of angle $\alpha_0$ on the circle $C_0$ of center $w$ and of radius $r$ inside the $2$-plane $\Pi+w$.}}\label{image_cercle_immersion}
\end{figure}
We consider the loop pattern given by
\begin{eqnarray*}
\begin{array}{lclc}
c: & [0,\alpha_0] \times\R/\Z & \longrightarrow & \R^3\\
& (\alpha,t) & \longmapsto & \Big(\cos(\alpha \cos 2\pi t)- J_0(\alpha), \sin(\alpha \cos 2\pi t),1\Big)
\end{array}
\end{eqnarray*}
and the loop family 
\begin{eqnarray}\label{bgamma_imm}
\bgamma(\sigma,w)(t):= (\cos \left(\alpha \cos 2\pi t \right)-J_0(\alpha))\, {\bf e}_1 + \sin \left(\alpha \cos 2\pi t \right) {\bf e}_2+{\bf e}_3.
\end{eqnarray}
Note that if $\alpha = \alpha_0$, the image of $\bgamma$ is an arc of circle of angle $\alpha_0$ whose image lies inside the circle $C_0$ of center ${\bf e}_3=w$. Moreover if $\alpha=0$, we have $c(0,t) = (0,0,1)$, then $\bgamma(\sigma,w) \equiv w$.\\

\FloatBarrier

\noi
\textbf{Relative property.--} Let $\delta > 0$. The image of $\bgamma$ is an arc of circle of angle $2\alpha$ and of center $\Omega={\bf e}_3-J_0(\alpha){\bf e}_1$. Our goal is to define a function $\alpha:\iconv(\im,d\pi_x,u_x)\rightarrow [0,\alpha_0]$ such that $\bgamma$ is $\delta$-relative. To do so we introduce three subspaces of $\iconv(\im,d\pi_x,u_x)$:
\begin{eqnarray*}
\begin{array}{lll}
Z_{\Delta} &=&\{(\sigma,w) \;|\; \delta\leq d(w) \mbox{ and } w= L(u)\}\vspace*{1mm}\\
Z_1(\epsilon) &=& \{(\sigma,w)\;|\; \frac{\delta}{2} < d(w) \mbox{ and } dist(L(u),w) < \epsilon \}\\
Z_0 &=& Z_1(\epsilon)^C.
\end{array}
\end{eqnarray*}
where $d(w):=dist(w, \im(\sigma,d\pi_x,u_x)^C)=\dist(w,P)$ and $\epsilon>0$ will be chosen latter. We consider any smooth map $\alpha
: \iconv(\im,d\pi_x,u_x) \rightarrow [0,\alpha_0]$ such that
$$\left\{
\begin{array}{ll}
\alpha(\sigma,w)=0 & \mbox{ if }(\sigma,w)\in Z_{\Delta} \\
\alpha(\sigma,w)=\alpha_0 & \mbox{ if } (\sigma,w)\in Z_0.
\end{array}
\right.$$
By construction of $\alpha$, it is immediate that the loop family $\bgamma$ is $\delta$-relative. Indeed, $t\mapsto \bgamma(\sigma,w)(t)$ is constant equal to $w$ if $dist(w, P)\geq \delta$ and $w= L(u)$. Remark also that $\bgamma(\sigma,w)$ is an arc of circle of angle $2\alpha_0$ if $(\sigma,w)\in Z_0$. \\

\begin{figure}[!ht]
\centering
\includegraphics[scale=0.5]{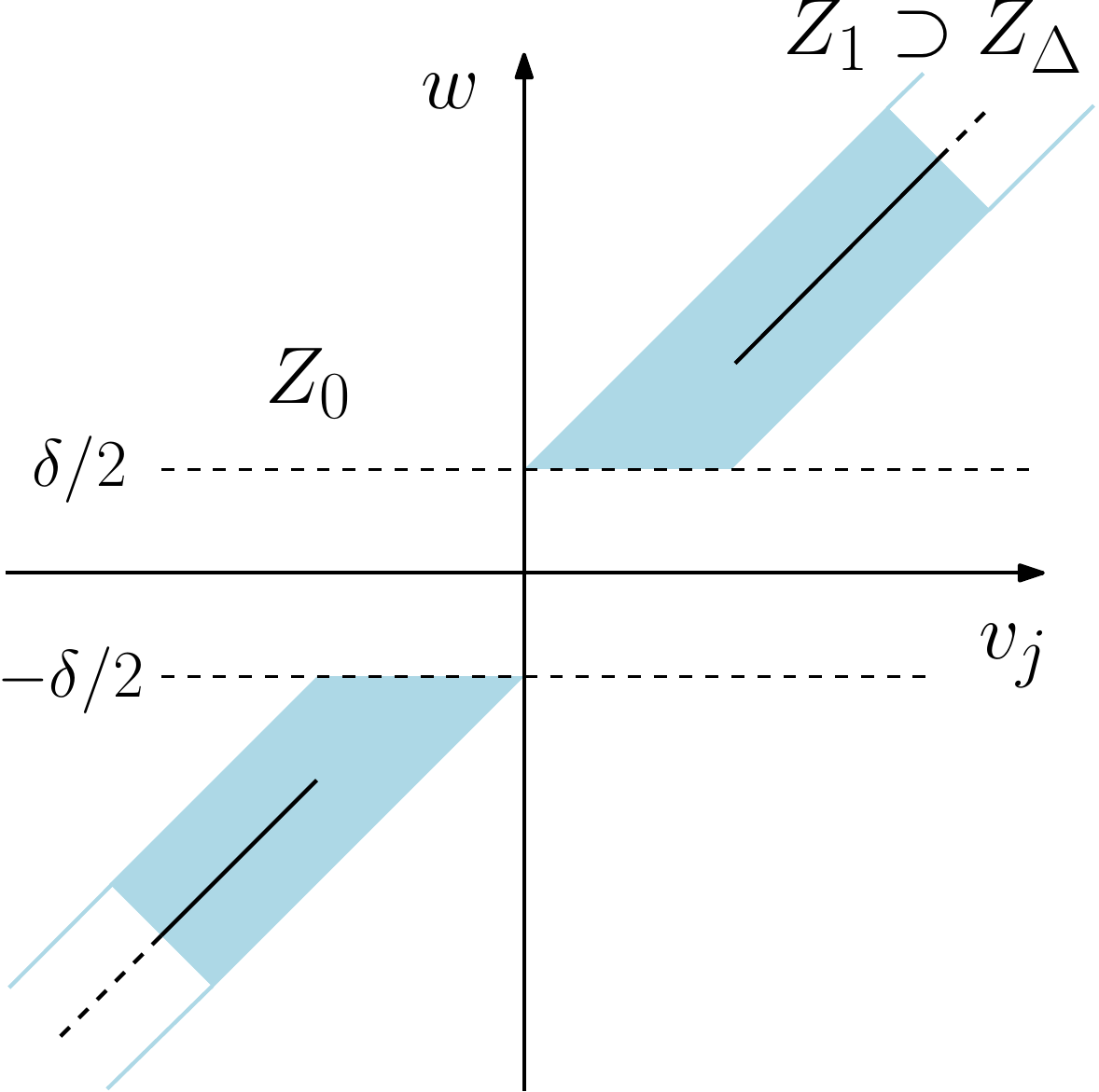}
\caption{\footnotesize{ \textbf{Values of $\alpha$ in $\iconv(\im,d\pi_x,u_x)$}: $\alpha=\alpha_0$ over $Z_0$ and $\alpha=0$ over $Z_{\Delta}$. In between, $\alpha$ is a smooth interpolation.}}\label{image_domaine_interpolation_immersion}
\end{figure}

\noi
\textbf{Proof of the surrounding property.--} To prove that the loop family built with the previous pattern $c$ and the previous map $\mathcal{e}$ is surrounding, we have to prove  Points (1), (2) and (3) of Definition~\ref{def_surrounding}.\\

\noi
\textbf{Proof of Point (1).--} We first check Point $(1)$ of Definition~\ref{def_surrounding} i.e. that $t\mapsto \bgamma(\sigma,w)(t)$ is a loop in $\im(\sigma, d\pi_x, u_x)$. Observe that if $(\sigma,w)\in Z_{\Delta}$ then $\bgamma(\sigma,w)$ is a constant map whose value $w$ is in $\im(\sigma, d\pi_x, u_x)$ by definition of $Z_{\Delta}$. Observe also that over $Z_0$, the image of $\bgamma(\sigma,w)$ lies on the circle $C_0$ of radius ${\bf r}$ and center $w$. The choice of $r$ implies that this circle is included in $\im(\sigma, d\pi_x, u_x)$. It remains to show that if $\epsilon$ is small enough and if $(\sigma,w)\in Z_1(\epsilon)$, the image of $\bgamma(\sigma,w)$ lies inside $\im(\sigma, d\pi_x, u_x).$  We first consider the punctured diagonal $\Delta^*=\{ (L(u),L(u)), L(u)\neq 0\}$ (note that $Z_{\Delta}\subset \Delta^*$). Over $\Delta^*$, we have 
\begin{eqnarray*}
\bgamma(\sigma,w)(t) = \left(1+ \Big( \cos(\alpha\cos 2\pi t)-J_0(\alpha) \Big) \displaystyle \frac{\textbf{r}}{\|L(u)\|}\right) L(u)+\sin(\alpha\cos 2\pi t){\bf r}\nu
\end{eqnarray*}
Observe that $\bgamma(\sigma,w)(t)\not\in P\iff\bgamma(\sigma,w)(t)\neq 0$ and it is readily checked that $\bgamma(\sigma,w)(t)$ never vanishes. By continuity, this is still true on the neighborhood of $Z_{\Delta}$ and thus on $Z_1(\epsilon)$ for $\epsilon>0$ small enough (see Figure~\ref{image_domaine_interpolation_immersion}). This finishes the proof of Point $(1)$.\\

\noi
\textbf{Proof of Point (2).--} By the definition of the Bessel function, $\overline{\bgamma}(\sigma,w) = {\bf e}_3=w$ and therefore $\bgamma$ satisfies the Average Constraint, i.e. Point (2).\\

\noi
\textbf{Proof of Point (3).--} To prove this point, we describe a canonical homotopy between the base point
\begin{eqnarray*}
\bgamma(\sigma, w)(0) = (\cos(\alpha) - J_0(\alpha) )\textbf{e}_1 + \sin(\alpha)\textbf{e}_2 +w
\end{eqnarray*}
and $L(u)$. To do so, we distinguish two cases whether $(\sigma,w)$ lies inside $Z_0$ or $Z_1(\epsilon)$.\\

\noi
\textit{Case $(\sigma,w)\in Z_{0}$.}  In that case $\alpha(L(u),w) = \alpha_0$. We first consider a homotopy $h_{1/4}$ between $\bgamma(\sigma,w)(0)$ and $\bgamma(\sigma,w)(1/4)$ given on $\bgamma$ by a homotopy between $0$ and $1/4$. Remark that $\bgamma(\sigma,w)(1/4) = {\bf e}_1 + w$. Then we can define the obvious affine homotopy from $\bgamma(\sigma,w)(1/4)$ to $L(u)$ by setting
\begin{eqnarray*}
h(t) := t \,({\bf e}_1 + w) + (1-t)L(u).
\end{eqnarray*}
We decompose $w= [w]^P + w_1\textbf{e}_1 + e^{\perp}$ with $e^{\perp}\in (P \oplus Span(L(u)))^{\perp}$. Because $\textbf{r}> \dist([w]^P,w),$ we have $\|w_1 {\bf e}_1\|< \textbf{r}$ and thus $|w_1|<1$. We now can write
$$h(t)=(t+tw_1+(1-t)\textbf{r}^{-1}\|L(u)\|)\, {\bf e}_1+ t e^{\perp}.$$
Since $t+tw_1>0$, the ${\bf e}_1$-component of $h(t)$ never vanishes so $h(t)$ is a canonical homotopy of $\im(\sigma, d\pi_x, u_x)$ joining the base point $\bgamma(\sigma,w)(1/4)$ to $L(u)$. It is enough to consider the homotopy $h\circ h_{1/4}$ to conclude.\\

\noi
\textit{Case $(\sigma,w)\in Z_1(\epsilon)$.} The previously defined homotopy $h_{1/4}$ allows to join $\bgamma(\sigma,w,\alpha)(0)$ to $\bgamma(\sigma, w, \alpha)(1/4)$ while staying in $\im(\sigma, d\pi_x, u_x)$. But over $Z_1(\epsilon)$, the function $J_0(\alpha)$ varies from $1$ to $0$ and thus $\bgamma(1/4) = (1-J_0(\alpha))\textbf{e}_1 +w$ is not $\textbf{e}_1 +w$ in general. We thus introduce an extra homotopy $h_{\alpha_0}$ defined by
\begin{eqnarray*}
h_{\alpha_0}(t) := \bgamma(\sigma, w, (1-t)\alpha + t \alpha_0)(1/4)
\end{eqnarray*}
to join $\bgamma(\sigma, w, \alpha)(1/4)$ to $\bgamma(\sigma, w, \alpha_0)(1/4).$ Note that our choice of $\epsilon$ in the definition of $Z_1(\epsilon)$ ensures that the image of $h_{\alpha_0}$ lies inside $\im(\sigma, d\pi_x,u_x)$. The homotopy $h \circ h_{\alpha_0} \circ h_{1/4}$ joins $\bgamma(\sigma, w)(0)$ to $L(u)$. This concludes Point (3) of Definition \ref{def_surrounding}.\\

\noi
It ensues that $\bgamma$ is a relative surrounding loop family which is $c$-shaped. This proves Point 1 of Theorem~\ref{th1}.

\subsection{Proof of point 2 (totally real immersions)}
\label{proof_point2}

\noi
Let $m=\dim(M)$ and $\im_{TR}\subset J^1(M,W)$ be the differential relation of totally real immersions
$$\im_{TR} := \{(x,y,L) \; |\;  L(T_xM)\oplus JL(T_xM)=T_yW \}$$
and let $\sigma=(x,y,L)\in \im_{TR}$. We consider the slice
$$ \im_{TR}(\sigma,\lambda,u) = Conn_{L(u)}\{ v\in T_yW\; |\; (x,y,L_v)\in\im_{TR}\}$$
where $L_v:=L-(v-L(u))\otimes\lambda$, $\lambda\in T_x^*M$ and $u\in T_xM$ such that $\lambda(u)=1.$ Let $P:=L(\ker \lambda)+JL(\ker \lambda)$. Observe that $L_v(\ker\lambda)=L(\ker\lambda)\subset P$, $L_v(u) =v$ and $JP=P$. If $v\in P$ then $ L_v(T_x M) = L_v (\ker \lambda) +\R v \subset P$, so
\begin{eqnarray*}
L_v(T_xM) + JL_v(T_xM) \subset P \varsubsetneq T_y W
\end{eqnarray*}
and $L_v$ is not a totally real linear map. If $v\not\in P$, we have $Jv\not\in P$, then $L_v$ is totally real. We conclude that $v\in \im_{TR}(\sigma,\lambda,u)$ if and only if $v\notin P$. We have $\dim\;P=2m-2$ because $L:T_xM\rightarrow T_yW$ is a totally real linear map. It ensues that the space $\im_{TR}(\sigma,\lambda,u) $ is the complementary of the codimension 2 linear subspace $P$ in $T_yW$.\\

\noi
The slice $\im_{TR}(\sigma,\lambda,u)$ is thus completely analoguous to the slices of the relation of codimension one immersions. As a consequence, the construction of $\bgamma$ done for codimension one immersions fully applies for totally real immersions (we replace the unit normal vector $\nu$ by ${\bf r}JL(u)/\|JL(u)\|_h$ where $h$ is an auxiliary Hermitian metric). This shows that $\mathcal{I}_{TR}$ is a Kuiper relation with respect to~$c$ and that $\bCP(f,\pi,N)$ is totally real if $N$ is large enough.

\subsection{Expression of the Corrugation Process formula}\label{def_KcKs}

According to Proposition~\ref{prop_noInt}, the Corrugation Process applied on any subsolution $\s$ generates a map
\begin{eqnarray*}
\CP(f_0,\pi,N)(x)
&=&  \exp_{f_0(x)}\left(\frac{1}{N} C(\alpha(x),N\pi(x))\cdot e(x)\right)
\end{eqnarray*}
where $f_0=bs\;\s:M\rightarrow W$, $e(x):={\bf e}(\s(x),d f_0(u_x))$ and 
\begin{eqnarray*}
C(\alpha, t) &=& \int_{u=0}^t \Big( c(\alpha,u)-(0,0,1) \Big) du\\
 &=& \left(\int_{u=0}^t \Big( \cos(\alpha \cos 2\pi u)-J_0(\alpha)\Big )du, \int_{u=0}^{t} \sin(\alpha \cos 2\pi u ) du  ,0 \right)\\
 &&\\
 &=& (K_c(\alpha,t), K_s(\alpha,t),0).
\end{eqnarray*}

\noi
Observe that $K_c$ is $\frac{1}{2}$-periodic in $t$ and that $K_s(\alpha,t+\frac{p}{2})=(-1)^pK_s(\alpha,t)$ for $p\in\N$.

\section{Application 1: Immersion of $\R P^2$}\label{sectionApplImmersion}

\subsection{Desingularization of the Plücker's conoid}\label{corrugated_PC}

\noi
{\bf The initial map.--} We consider the following parametrization $f_0:D=[-3,3]\times[0,1]\rightarrow\R^3$ of the Plücker's conoid
\begin{eqnarray*}
f_0(x_1,x_2) = \left(x_1\cos(\pi x_2),\ x_1\sin(\pi x_2),\ \frac{1}{2}\cos(2\pi x_2)\right).
\end{eqnarray*}

\begin{figure}[!ht]
\centering
\includegraphics[scale=0.15]{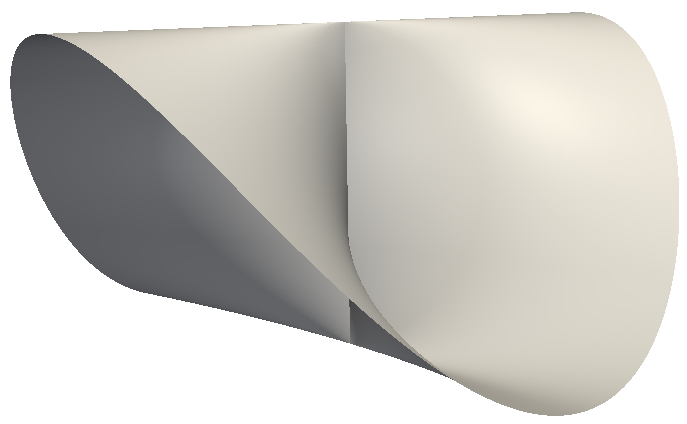}$\hspace{0.06\linewidth}$
\includegraphics[scale=0.2]{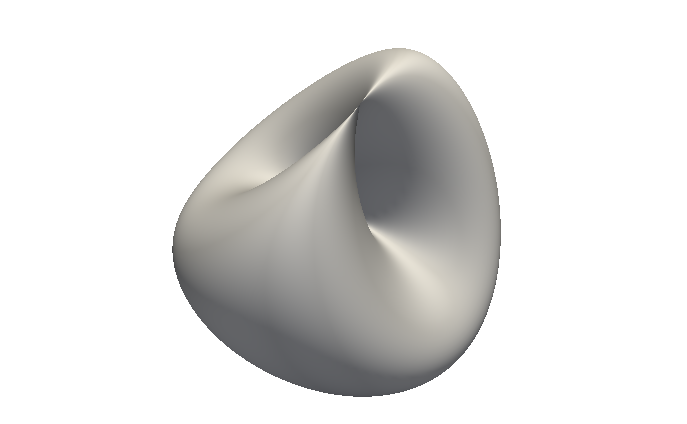}
\caption{\footnotesize{\textbf{Plücker's conoid and Cross-cap}}}\label{image_conoid}
\end{figure}

\noi
\textbf{The direction.--} The map $f_0$ has two singular points $x = (0,0)$ and $x = (0,\frac{1}{2})$ on which $\partial_{2} f_0$ is zero. The removal of these singularities will be performed by a relative Corrugation Process in the direction $u=\partial_2$. Let
\begin{eqnarray*}
K := \Big( [-3,-2] \cup \  [2,3]\Big)  \times [0,1]\,\mbox{ and }\;
\mathfrak{Op}(K) := \Big( [-3,-1[ \cup \  ]1,3]\Big)  \times [0,1].
\end{eqnarray*}
Precisely, we shall built a section $\s :D\rightarrow \im$ coinciding with $j^1f_0$ on $\mathfrak{Op}(K).$\\

\noi
\textbf{The subsolution.--}  For each $x_2\in [0,1]$, we consider the rotation
$R_{x_2,\theta}$ of angle $\theta$ in the oriented plane
$$\Pi_2(-1,x_2):=Span( \partial_{2}f_0(-1,x_2),(\partial_{2}f_0\wedge\partial_{1} f_0)(-1,x_2)).$$
Let $\theta$ be a smooth map such that $x_1\mapsto \theta(x_1,x_2)$ is an interpolation between $0$ and $\theta_{max}(x_2)$ where
$$\theta_{max}(x_2):=\mathrm{angle}(\partial_{2}f_0(-1,x_2),\partial_{2}f_0(1,x_2)).$$
We write a section $\s=(x,y,L)$ under the form $x\mapsto (x,y(x),v_1(x),v_2(x))$ where $v_1=L(\partial_1)$ and $v_2=L(\partial_2)$. We define $v_1$ and $v_2$ to be  
\begin{eqnarray*}
 v_1(x_1,x_2) & := & \partial_{1}f_0(x_1,x_2),\\
 v_2(x_1,x_2) & := & R_{x_2,\theta(x_1,x_2)}(\partial_{2} f_0(-1,x_2))
\end{eqnarray*}
on $D\backslash \mathfrak{Op}(K)$ and to be $v_i=\partial_i f_0$ elsewhere. Since $\partial_{2}f_0(1,x_2)$ is in the plane $\Pi_2(-1,x_2)$, the map $v_2$ is continuous on $D.$ It implies that $x\mapsto \s (x)=(x,f_0(x),\partial_{1}f_0(x),v_2(x))$ is a continuous section of $\mathcal{I}$ which coincide with $j^1f_0$ over $\mathfrak{Op}(K).$\\

\noi
\textbf{The relative loop family.--} We set
$$e_1(x) := r\frac{v_2(x)}{ \| v_2(x)\|},\;\;\;e_2(x): = r\frac{ v_2(x) \wedge v_1(x) }{\| v_2(x) \wedge v_1 (x) \|}$$
with
\begin{eqnarray*}
r := \sup_{x\in D\backslash \mathfrak{Op}(K)} \| \partial_{2} f_0(x) \| +\frac{1}{2} = \sqrt{2}\pi +\frac{1}{2}.
\end{eqnarray*}

\noi
Let $\alpha$ be any interpolating smooth function such that $\alpha\equiv 0$ on $K$ and $\alpha\equiv\alpha_0$ on $D\setminus\mathfrak{Op}(K)$. From the relative loop family $\bgamma$ of Section~\ref{codim1}, we derive the following expression for $\gamma$:
\begin{eqnarray*}
\gamma(x,t) :=& \Big(\cos(\alpha(x) \cos 2\pi t) - J_0(\alpha(x))\Big)e_1(x)\\
&+ \sin( \alpha(x) \cos 2\pi t) e_2(x) + \partial_{2}f_0(x). 
\end{eqnarray*} 

\noi
\textbf{The quotient condition.--} We consider the action of $G=\Z$ on $[-3,3]\times \R$ given by $k\cdot  (x_1,x_2) = ((-1)^kx_1,x_2+k)$. A fundamental domain for this action is $[-3,3]\times [0,1]$ and its quotient $\M^2=\R^2/G$ is a   Mobius strip. Observe that the parametrization $f_0$ is $G$-invariant and thus descends to the quotient. 
It is readily seen that $f_1=\CP(f_0,\partial_2,N)$ descends to the quotient if
\begin{eqnarray}\label{eq_MobiusCondition}
\Gamma(1\cdot x,Nx_2+N) = \Gamma(x,Nx_2).
\end{eqnarray}
By definition of $v_1$ and $v_2$, we have
\begin{eqnarray*}
e_1(k\cdot x) = e_1(x), \,\,\, e_2(k\cdot x) = (-1)^k e_2(x).
\end{eqnarray*}
Thus, if we choose $\alpha$ to be $G$-invariant, we obtain
\begin{eqnarray*}
\begin{array}{lll}
\Gamma\left(1\cdot x,N(x_2+1)\right)
&=& K_c\left(\alpha(x),N(x_2+1)\right)e_1(1\cdot x)+K_s\left(\alpha(x),N(x_2+1)\right)e_2(1\cdot x)\vspace*{1mm}\\
&=& K_c\left(\alpha(x),N(x_2+1)\right)e_1(x) - K_s\left(\alpha(x),N(x_2+1)\right)e_2(x)
\end{array}
\end{eqnarray*}
Since $K_c(\alpha,t+\frac{p}{2})=K_c(\alpha,t)$ and $K_s(\alpha,t+\frac{p}{2})=(-1)^p K_s(\alpha,t)$, it is enough to choose $N\in\N +1/2$ to fulfill condition~\ref{eq_MobiusCondition}

\begin{figure}[!ht]
\centering
\includegraphics[scale=0.14]{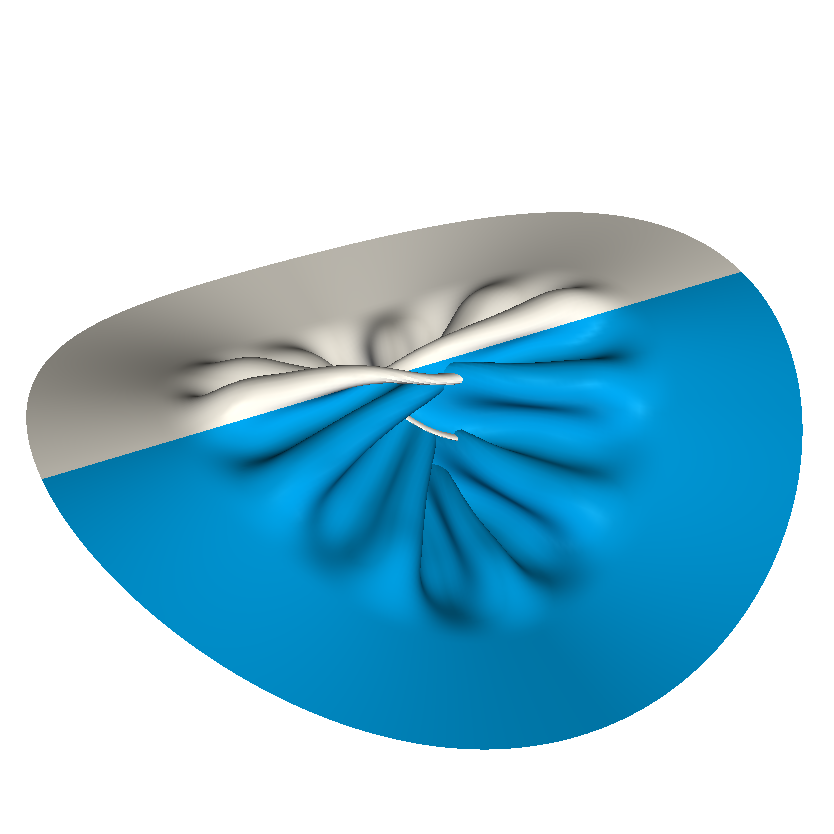}
\includegraphics[scale=0.14]{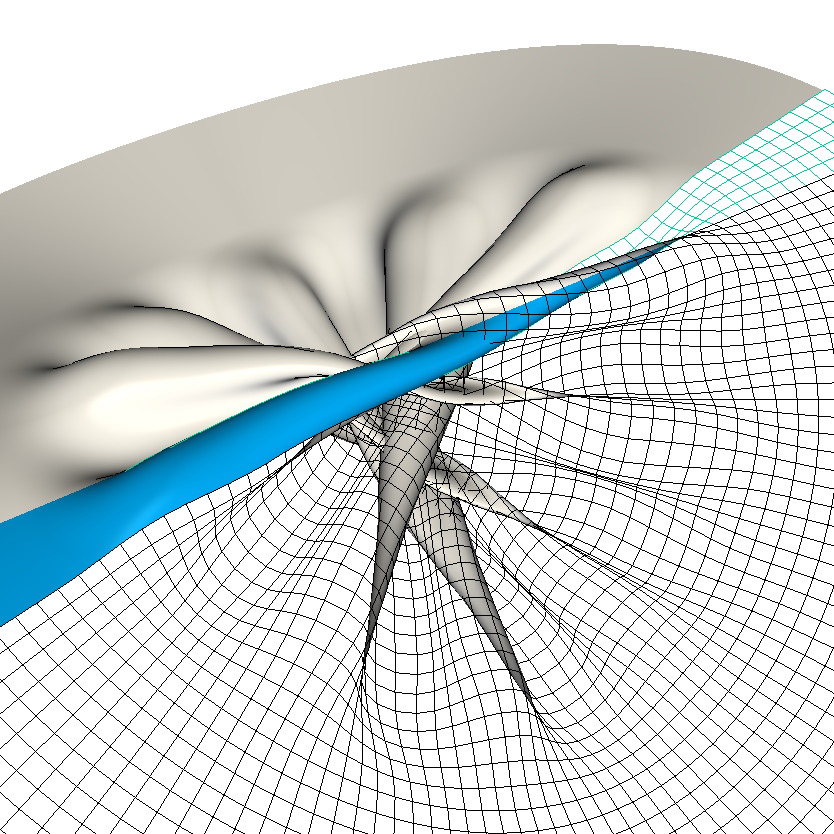}
\includegraphics[scale=0.14]{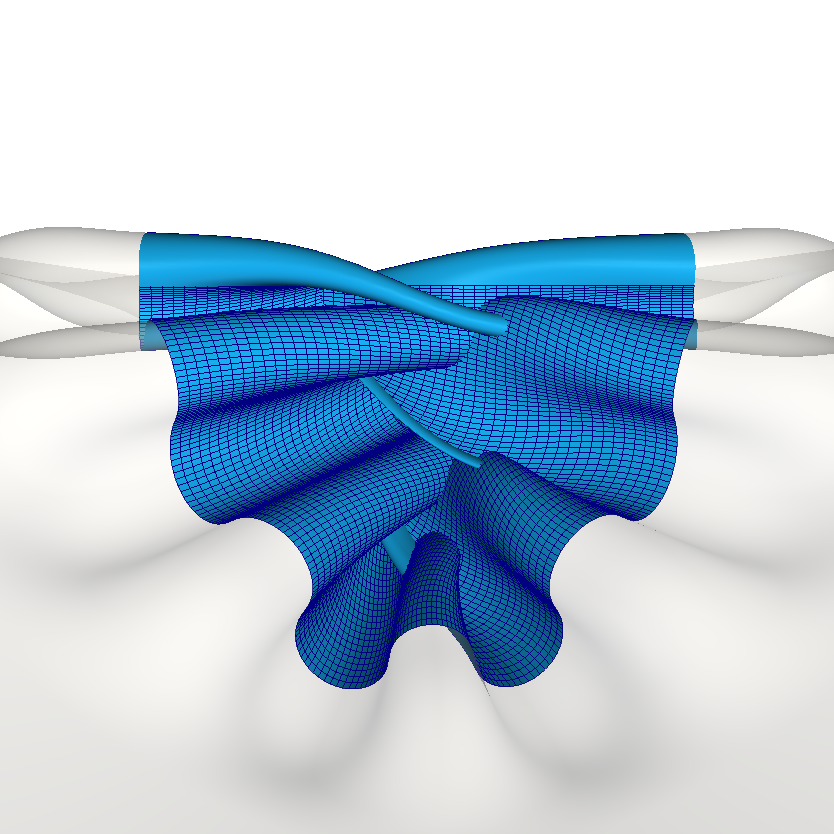}
\end{figure}
\begin{figure}[!ht]
\centering
\includegraphics[scale=0.14]{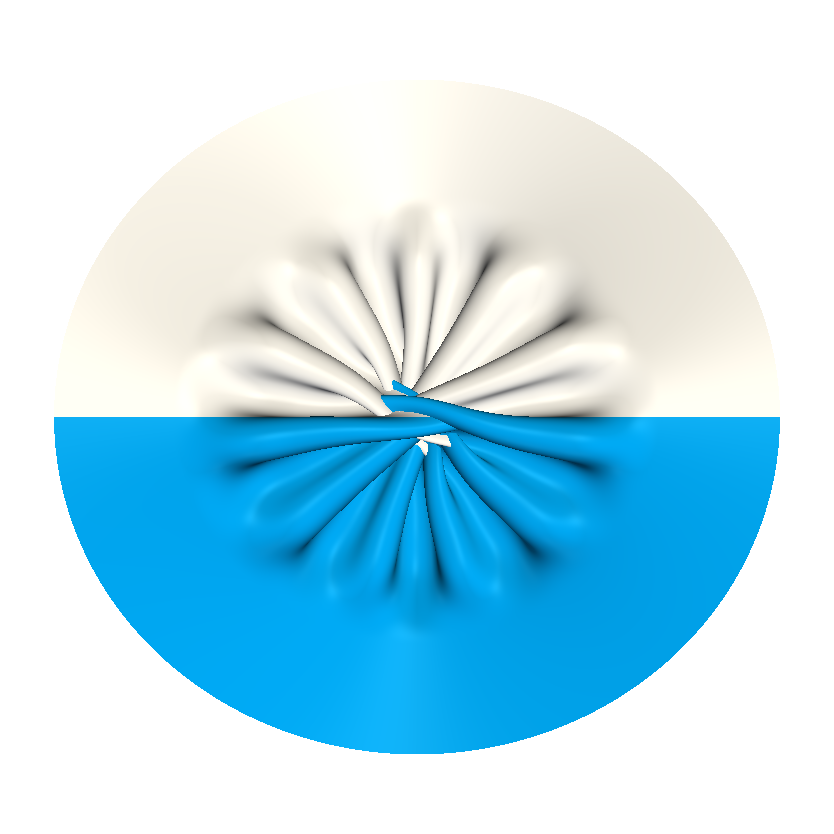}
\includegraphics[scale=0.14]{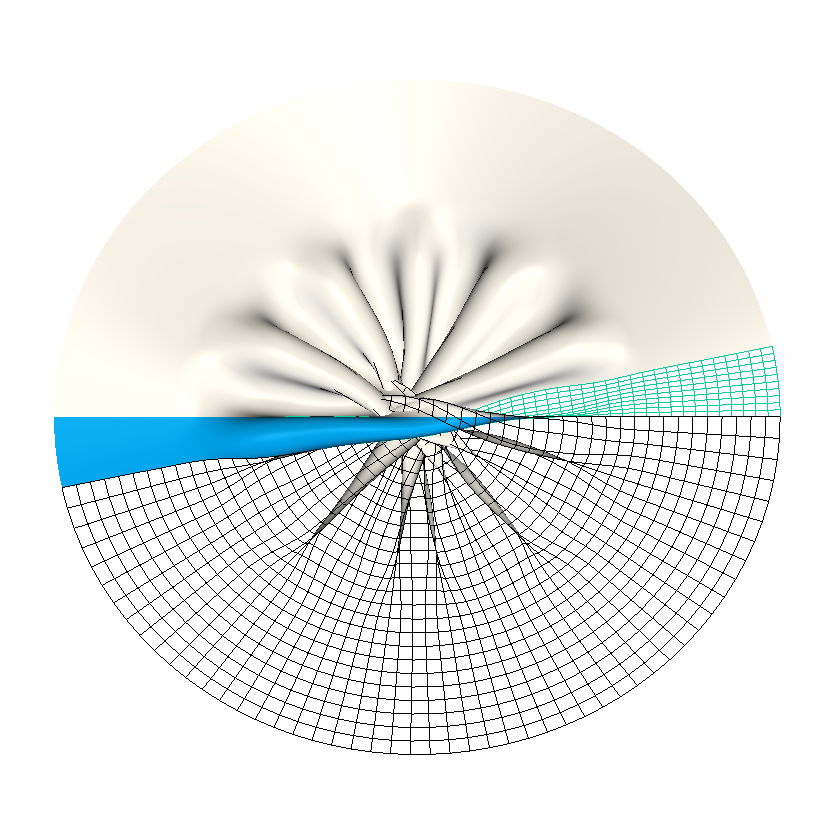}
\includegraphics[scale=0.14]{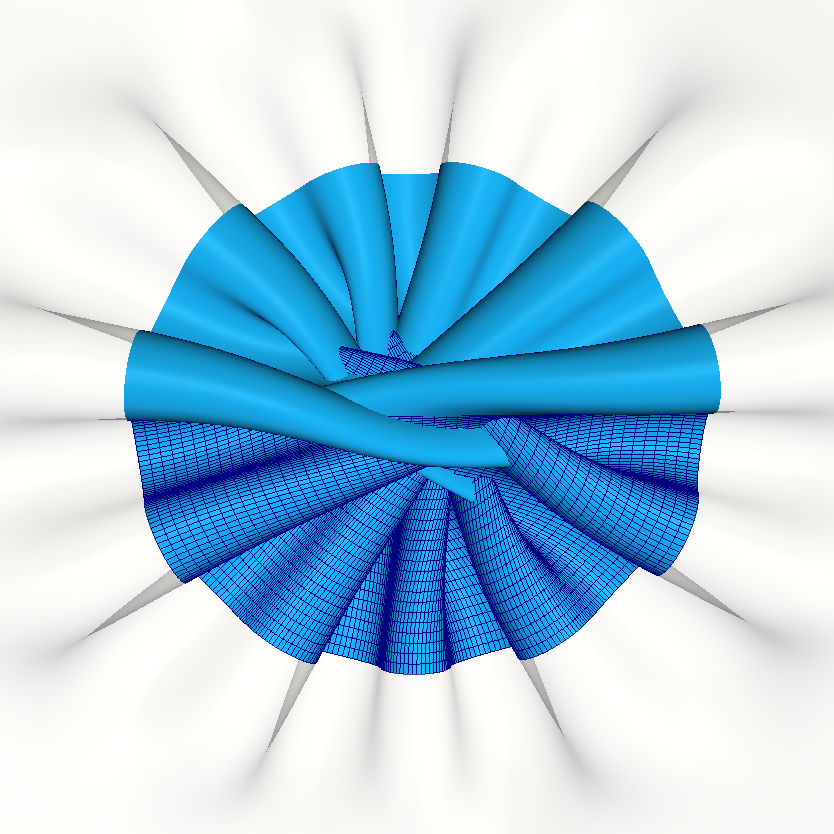}
\caption{ \footnotesize{\textbf{Desingularization of the Plucker Conoid} obtained as an image of $\M^2$ by $f_1$ with $N=5.5$ and $\theta(x_1,x_2) =  0.5\ ( \sin( 0.5\pi x_1 )+1 )\theta_{max}(x_2),$ $\alpha(x) = \frac{\alpha_0}{2}( \cos(\pi x_1 +\pi) + 1 )$.}}\label{image_conoid_local}
\end{figure}

\FloatBarrier

\noi
\textbf{The desingularized map.-- } This map is given by
\begin{eqnarray}\label{equ_conoid_desing}
f_1(x)=f_0(x)+\frac{1}{N}K_c(\alpha(x), Nx_2)e_1(x)+\frac{1}{N}K_s(\alpha(x), Nx_2)e_2(x)
\end{eqnarray}
for all $x\in D.$ The maps $K_c$ and $K_s$ are the functions defined in Subsection~\ref{def_KcKs}. According to Proposition~\ref{prop_noInt} the map $f_1$ is an immersion if $N$ is large enough.

\begin{figure}[!ht]
\centering
\includegraphics[scale=0.14]{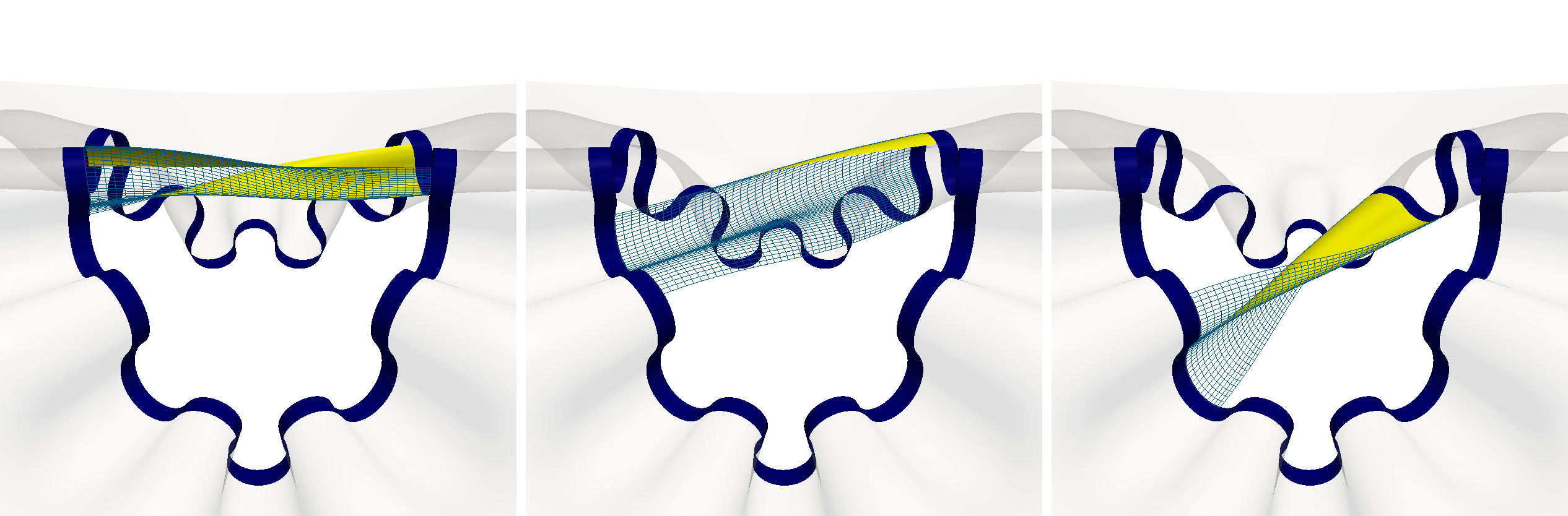}
\end{figure}
\begin{figure}[!ht]
\centering
\includegraphics[scale=0.14]{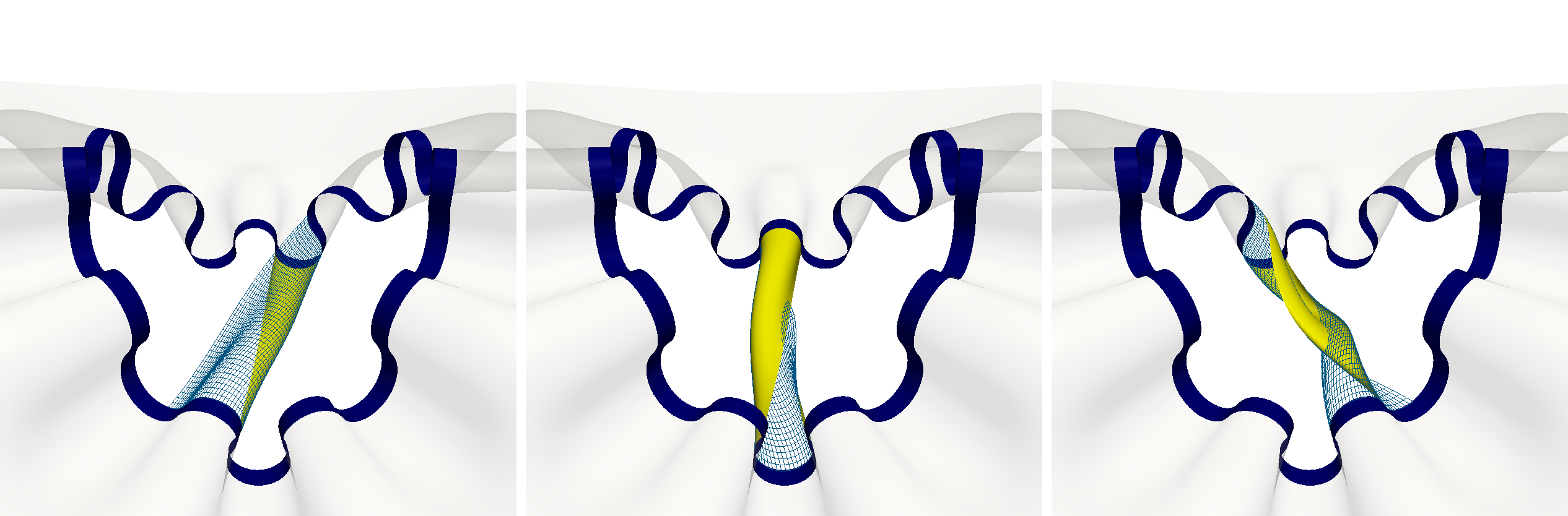}
\end{figure}
\begin{figure}[!ht]
\centering
\includegraphics[scale=0.14]{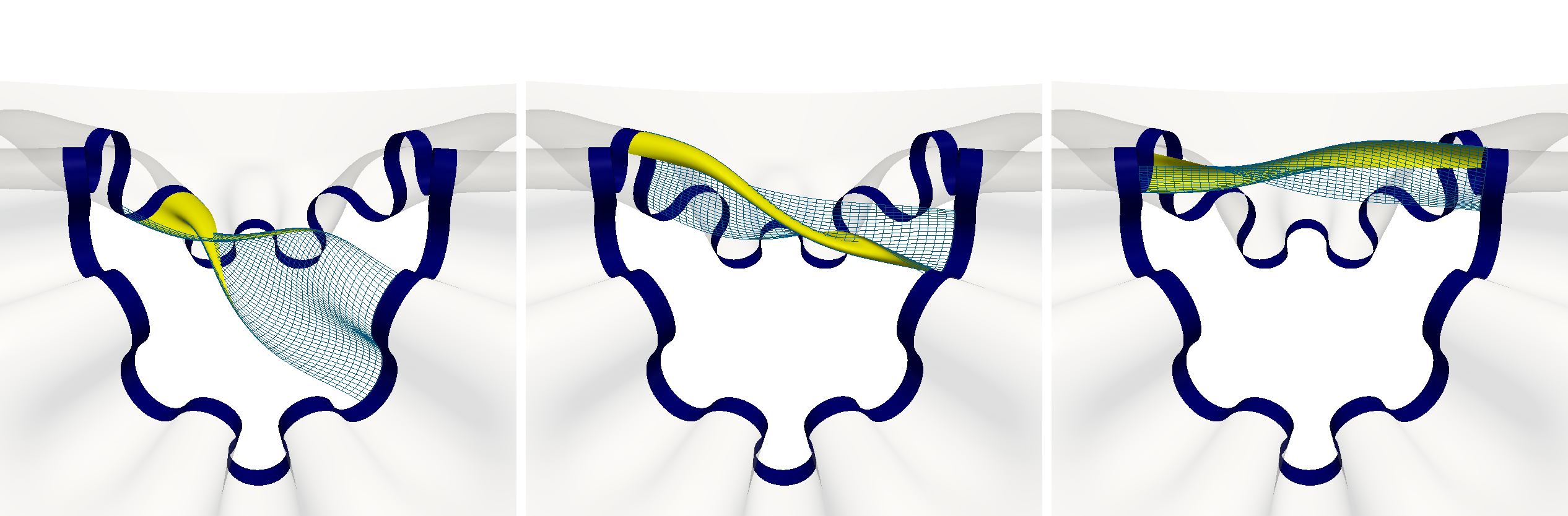}
\caption{ \footnotesize{\textbf{The spinning of corrugation:} note the similarity with the "tobacco pouch" surfaces of \protect \cite{Francis}, p.115.}}\label{image_conoid_centre}
\end{figure}

\FloatBarrier

\subsection{New Immersions of $\R P^2$}\label{RP2_close}

\noi
{\bf Immersions of $\R P^2$ via an inversion of $f_1$.--} Let us consider the entire Plücker conoid, that is the image of $f_0$ on the set $\R\times [0,1]$ (where $f_0$ is well-defined). It is well known that a representation of the projective space can be obtained by applying an inversion to the entire Plücker conoid. The resulting surface is bounded, its closure still has two pinch points and is called a cross-cap. We consider $\widetilde{f_1}:= f_1$ on $D$ and $\widetilde{f_1}= f_0$ on $(\R\times [0,1]) \backslash D$. To obtain an immersion of the real projective space it is enough to apply an inversion to $\widetilde{f_1}$ and to take its closure.\\

\noi
{\bf Immersions of $\R P^2$ via an extension of $f_1$.-- } Here, for purely aesthetic reasons, we avoid the use of an inversion and we choose to extend the corrugated Plücker conoid in such a way it closes up on itself like an hemisphere. Specifically, we parametrize the sphere of radius 2.5 by a map $S$ defined on $[-5,5]\times [0,1]$ and given by 
$$(x_1,x_2)\mapsto 2.5\left(\cos \pi x_2\sin \frac{\pi x_1}{5},\sin \pi x_2\sin \frac{\pi x_1}{5}, \cos\frac{\pi x_1}{5} \right).$$
We then define a map $F_1$ on the same domain by setting:
\begin{eqnarray}\label{equ_RP2_desing}
\left\{
\begin{array}{lll}
F_1(x_1,x_2)_{XY} &=& S(x_1,x_2)_{XY} +f_1(x_1,x_2)_{XY}-f_0(x_1,x_2)_{XY}\\
F_1(x_1,x_2)_Z &=& S(x_1,x_2)_Z + \beta(x_1)(f_1(x_1,x_2)_Z - 1)
\end{array}
\right.
\end{eqnarray}
if $(x_1,x_2)\in [-2.5,2.5]\times  [0,1]$ and $F_1=S$ otherwise. In this formula, $f_{XY}$ means the $X,Y$ components of $f$, $f_Z$ means the $Z$ components and the map $\beta$ is a smooth interpolation between $\beta(0)=1$ and $\beta(\pm 2.5)=0$ (see Figure~\ref{image_RP2}). 

\FloatBarrier

\section{Application 2: Totally real isometric embeddings}\label{sectionAppl2}

\subsection{Proof of Theorem~\ref{th2}}\label{subsection:proofthm2}
The proof is divided into two parts. The first one relies on the arguments of Nash-Kuiper \cite{Nash54, Kuiper55, EliaMisha-book} to construct a sequence of maps $(f_k)$ converging toward an $C^1$ isometric map $f_{\infty}: (M,g)\rightarrow (W,J,h)$. In the second part, we replace the Nash-Kuiper processes by a Corrugation Process to construct iteratively the sequence $(f_k)$. We use Corrugation Process $CP_{\bgamma}(f,\pi,N)$ defined in section \ref{proof_point2} to solve both the totally real relation $\im_{TR}$ and the isometric relation $\is(\epsilon,\mu)$ with $\mu= f^*h+\rho\, \d\pi\otimes d\pi$ (see Section~\ref{Nash-Kuiper-historique}). We use the fact that $\im_{TR}$ is Kuiper with respect to $c$ to control the geometry of each $f_k$ and to show that the limit map $f_{\infty}$ is totally real. \\

\noi
\textbf{First part:}  Let $\Delta:=g-f_0^*h$ be the isometric default, $(\delta_k)_{k\in\N^*}$ be an increasing sequence of positive numbers converging to 1 and
$$g_k=f_0^*h+\delta_k\Delta$$
be an increasing sequence of metrics converging toward $g$. The Nash-Kuiper proof involves in building an infinite sequence $(f_k)_{k\in\N}$ of maps such that each $f_k$ is approximatively isometric for $g_k$ and short for $g_{k+1}$:
$$f_k^*h\approx g_k\;\;\;\mbox{ and }\;\;\; f_k^*h\leq g_{k+1}.$$
This sequence is obtained by a succession of local deformations performed to reduce the isometric default in one direction (approximatively). This generates a finite number of recursively defined intermediary maps 
$$f_k=f_{k,0}, f_{k,1},\dots, f_{k,I(k)}=f_{k+1}$$
where $I(k)<+\infty$ depends on $\dim M=m$ and on the number of charts of a finite atlas $\{(U_a,\varphi_a)\;|\;a\in A\}$ of $M$. More precisely, let $(\psi_a)_{a\in A}$ be a partition of unity subordinate to $(U_a)_{a\in A}$. A first portion of the intermediary maps achieves the approximation of the increase $\psi_1(g_{k+1}-f_{k}^*h)$ over $U_1$, a second portion achieves the increase $\psi_2(g_{k+1}-f_{k}^*h)$ over $U_2$, etc. To do so, on each $U_a$ the desired increase is decomposed as a finite combination of squares of constant linear forms:
$$\psi_a(g_{k+1}-f_{k}^*h)=\sum_{j=1}^{j_{max}(a)}\rho_{a,j}\ell_{a,j}\otimes\ell_{a,j}$$
with positive coefficients $\rho_{a,j}:U_a\rightarrow \R_{\geq 0}$. The map $f_{k,1}$ is built to satisfy 
$f_{k,1}^*h\approx f_k^*h+\rho_{1,1}\ell_1\otimes\ell_1$, the map $f_{k,2}$ to satisfy $f_{k,2}^*h\approx f_{k,1}^*h+\rho_{1,2}\ell_2\otimes\ell_2$, etc. In particular, for $I=\sum_{a\in A}j_{max}(a)$ the map $f_{k,I}$ satisfies $$f_{k,I}^*h\approx f_k^*h+(g_{k+1}-f_{k}^*h)=g_{k+1}.$$
{\it In fine}, the fundamental step in this approach is the following: given a map $f:U\rightarrow W$, a positive coefficient $\rho:U\rightarrow \R_{\geq 0}$, a chart $(U,\varphi=(\varphi^1,\dots,\varphi^m))$ and a constant linear form $\ell=\sum_{i=1}^m c_id\varphi^i$ to construct an $\epsilon$-isometric map $f_{\epsilon}:(U,\mu) \rightarrow (W,h)$ where $\mu:=f^*h+\rho\ell\otimes\ell$. Eventually, the $C^1$ convergence of the sequence $(f_k)_{k\in \N}$ toward a $C^1$ isometric map $f_{\infty}:(M,g)\rightarrow (W,h)$ is insured by choosing a sequence of spiraling/oscillation numbers $(N_{k,i})$ that increases rapidly and a sequence $(\delta_k)_{k\in\N^*}$ such that $\sum \sqrt{\delta_k-\delta_{k-1}}<+\infty$.\\

\noi
\textbf{Second part:} The fundamental step can also be achieved by a Corrugation Process as explained in section~\ref{Nash-Kuiper-historique}. We now assume that $f$ is a totally real map. We put $\pi=\sum_{i=1}^mc_i\varphi^i$, so that $\ell=d\pi$ and $u$ is any vector field such that $\ell(u)=1$. Let 
$$P_x := df(\ker d\pi_x)+Jdf(\ker d\pi_x)\subset T_{f(x)}W\;\mbox{ and }\; \textbf{t}(x):=\frac{[df(u_x)]^{P^{\perp}_x}}{\| [df(u_x)]^{P^{\perp}_x}\|_h}$$
where $[v]^{P_x^{\perp}}$ denotes the $P_x^{\perp}$ component of any vector $v$. Observe that $\textbf{t}(x)$ is a unit vector normal to $P_x$. As $JP_x=P_x$, $\textbf{n}(x) := J\textbf{t}(x)$ is normal to $P_x+\R\textbf{t}$(x) and thus to $df(T_xM)$. Since $\im_{TR}$ is a Kuiper relation with respect to $c$, the corrugation
\begin{eqnarray*}
CP_{\hgamma}(f,\pi, N) = \exp_{f}\left(\frac{r}{N}K_c(\alpha,N\pi)\textbf{t} +\frac{r}{N}K_s(\alpha,N\pi) \textbf{n}\right)
\end{eqnarray*}
produces the requested $\epsilon$-isometric totally real map $f_{\epsilon}:(U,\mu) \rightarrow (W,h,J)$ if $N$ is large enough. \\

\noi
It remains to show that the limit map $f_{\infty}$ of the sequence of totally real maps $(f_{k,i})$ generated by the above Corrugation Process is totally real as well. To do so, we consider the notion of $J$-density:  given a map $f:M\rightarrow (W,J,h)$ its $J$-density is the map $\kappa(f):M\rightarrow \R_{\geq 0}$ defined by
$$\kappa(f)(x)=\sqrt{|vol_{W} (df(e_1),\ldots,df(e_m),Jdf(e_1),\ldots,Jdf(e_m))|} $$ 
where $vol_W$ is the volume form of $W$ and $(e_1,\ldots,e_m)$ is any orthonormal basis of $T_xM$ for the pull back metric $(f^*h)_x$ (see \cite{Borrelli98}). Obviously, $\kappa(f)(x)$ does not depend on the chosen orthonormal basis and $df(T_xM)$ is totally real in $T_{f(x)}W$ if and only if $\kappa(f)(x)>0.$ It is Lagrangian if $\kappa(f)(x)=1$. Our goal is to show that $\kappa(f_{\infty})>0.$

\noi
\begin{lemme}\label{lemme_Jdensity1} If the vector field $u$ is chosen to be $f^*h$-orthogonal to $\ker d\pi$ on every point $x\in U$ then the $J$-density of the map $f_{\epsilon}=CP_{\hgamma}(f,\pi, N)$ satisfies $$\kappa(f_{\epsilon})\geq\frac{1}{\sqrt{\mu(u^*,u^*)}}\kappa(f)+O(1/N)$$
where $u^*:=u/\|u\|_{f^*h}$ is the normalized vector $u$ for the metric $f^*h.$
\end{lemme}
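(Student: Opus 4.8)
The strategy is to compute the $J$-density of $f_\epsilon = CP_{\hgamma}(f,\pi,N)$ directly from the explicit Corrugation Process formula and control the error terms in $N$. Recall from Proposition~\ref{kuiper_formula_is} that
$$f_\epsilon = f + \frac{r}{N}K_c(\alpha,N\pi)\textbf{t} + \frac{r}{N}K_s(\alpha,N\pi)\textbf{n},$$
so that $df_\epsilon(v) = df(v) + O(1/N)$ for $v\in\ker d\pi$ and, by property $(P_3')$ of Proposition~\ref{propCP}, $df_\epsilon(u) = \gamma(x,N\pi(x)) + O(1/N)$, where $\gamma$ is the loop family~\eqref{loopIs}. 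First I would pick an $f^*h$-orthonormal basis of $T_xM$ of the form $(u^*, e_2,\ldots,e_m)$ with $e_2,\ldots,e_m \in \ker d\pi_x$ — this is possible precisely because of the hypothesis that $u$ is $f^*h$-orthogonal to $\ker d\pi$. Then $\kappa(f_\epsilon)(x)^2 = |\mathrm{vol}_W(df_\epsilon(u^*), df_\epsilon(e_2),\ldots, Jdf_\epsilon(u^*),\ldots)|$, and I would expand this determinant, substituting $df_\epsilon(e_i) = df(e_i)+O(1/N)$ and $df_\epsilon(u^*) = \frac{1}{\|u\|_{f^*h}}\gamma(x,N\pi(x)) + O(1/N)$.

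The key algebraic observation is that $\gamma(x,t)$, by~\eqref{loopIs}, always lies in the affine plane $[df(u)]^P + \mathrm{Span}(\textbf{t},\textbf{n})$, and its $\textbf{t}$-component is $r\cos(\alpha\cos 2\pi t)$ while its $\textbf{n}$-component is $r\sin(\alpha\cos 2\pi t)$. Since $\textbf{n} = J\textbf{t}$ is $h$-orthogonal to $df(T_xM)$, the vector $df(e_i)$ and $Jdf(e_i)$ all lie in $P_x + \R\textbf{t}$ (using $JP_x = P_x$ and $J\textbf{t} = \textbf{n}$), so only the $\textbf{n}$-component of $\gamma$ and of $J\gamma$ contributes a genuinely new direction to the parallelepiped. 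A short computation should show that the $2m$-volume factors as $r\cdot|\sin(\alpha\cos 2\pi t)|$ (the $\textbf{n}$-contribution from $df_\epsilon(u^*)$, divided by $\|u\|_{f^*h}$) times the remaining $(2m-1)$-volume, which up to $O(1/N)$ equals $\kappa(f)(x)^2$ rescaled — more carefully, one recovers $\frac{\kappa(f)(x)^2}{\|u\|^2_{f^*h}}$ times a factor from the $\textbf{n}$-direction, and I would need to check the bookkeeping so the square roots land correctly to give $\kappa(f_\epsilon)\gtrsim \frac{1}{\sqrt{\mu(u^*,u^*)}}\kappa(f)$.

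The subtle point in getting the stated lower bound is the appearance of $\mu(u^*,u^*)$ rather than something smaller: since $\|\gamma(x,t)\|^2 = \|[df(u)]^P\|^2 + r^2 = \mu(u,u)$ by the choice of $r$ in Section~\ref{Nash-Kuiper-historique}, and $\mu(u^*,u^*) = \mu(u,u)/\|u\|_{f^*h}^2$, the normalized new direction $df_\epsilon(u^*)$ has $h$-norm $\sqrt{\mu(u^*,u^*)} + O(1/N)$, and the component of this vector orthogonal to $df_\epsilon(T_xM \ominus \R u^*) + J(\cdots)$ has length at least $\kappa(f)(x)/(\text{something}) $; dividing the ``new'' length by the full ambient scale is what produces the factor $1/\sqrt{\mu(u^*,u^*)}$.

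\textbf{Main obstacle.} I expect the real difficulty to be the clean determinantal bookkeeping: tracking which of the $2m$ columns of the Gram/volume computation pick up $O(1/N)$ corrections versus which stay exact, and checking that the cross-terms between the $O(1/N)$ perturbations and the $\textbf{t},\textbf{n}$ directions do not degrade the bound below the claimed one. One must also handle the case where $\sin(\alpha\cos 2\pi t)$ vanishes (i.e. $\alpha\cos 2\pi t \in \pi\Z$); there the $J$-density could momentarily drop, but since $f_\epsilon$ is totally real for $N$ large (by Section~\ref{proof_point2}), $\kappa(f_\epsilon)$ stays positive, and the inequality is meant as a uniform-in-$x$ lower bound obtained after a further argument (likely deferred to Lemma~\ref{lemme_Jdensity2}) controlling the ratio along the whole Nash--Kuiper iteration rather than pointwise for fixed $t$. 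I would state the lemma's proof carefully enough to expose exactly this $\textbf{n}$-direction mechanism and leave the finer iterative control to the companion lemma.
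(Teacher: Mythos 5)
Your overall strategy is the same as the paper's (expand $\kappa(f_{\epsilon})^2$ as a $2m$-volume, replace $df_{\epsilon}(e_j)$ by $df(e_j)$ and $df_{\epsilon}(u)$ by $\gamma(\cdot,N\pi)$ up to $O(1/N)$ using $(P_2)$ and $(P_3')$, and track where $r$ and the normalization enter), but the central determinant computation in your sketch is wrong, and the error then creates a spurious difficulty. You claim the volume factors as $r\,|\sin(\alpha\cos 2\pi t)|$ times a residual $(2m-1)$-volume, and consequently you worry about the zeros of the sine and defer a rescue to Lemma~\ref{lemme_Jdensity2}. In fact \emph{both} $\gamma$ and $J\gamma$ occur as columns of the volume form. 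Modulo $P=\mathrm{Span}\bigl(df(e_1),\ldots,df(e_{m-1}),Jdf(e_1),\ldots,Jdf(e_{m-1})\bigr)$ one has $\gamma \equiv r\cos\theta\,\textbf{t}+r\sin\theta\,J\textbf{t}$ and $J\gamma \equiv -r\sin\theta\,\textbf{t}+r\cos\theta\,J\textbf{t}$ with $\theta=\alpha\cos(2\pi N\pi)$, so this pair contributes the $2\times 2$ determinant $r^2(\cos^2\theta+\sin^2\theta)=r^2$, independent of $t$. Hence one gets the \emph{pointwise} identity $\kappa(f_{\epsilon})^2=\frac{r^2}{\mu(u,u)\,\|[df(u)]^{P^{\perp}}\|_h^2}\,\kappa(f)^2+O(1/N)$, and the stated inequality follows from $r^2=\rho+\|[df(u)]^{P^{\perp}}\|_h^2\geq \|[df(u)]^{P^{\perp}}\|_h^2$. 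No degeneracy ever occurs, and Lemma~\ref{lemme_Jdensity2} is merely a reformulation (it bounds $\mu(u^*,u^*)$ along the iteration), not a repair of a momentarily vanishing lower bound; as written, your argument does not prove the lemma.

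A second, smaller gap: $\kappa(f_{\epsilon})$ is by definition computed in a basis orthonormal for the pulled-back metric $f_{\epsilon}^*h$, not for $f^*h$. The factor $1/\sqrt{\mu(u^*,u^*)}$ arises precisely from replacing $e_m=\pm u^*$ by $\widetilde{e_m}=e_m/\sqrt{\mu(e_m,e_m)}$ and observing that $f_{\epsilon}^*h=\mu+O(1/N)$, so that $(e_1,\ldots,e_{m-1},\widetilde{e_m})$ is approximately $f_{\epsilon}^*h$-orthonormal and the renormalization pulls out exactly $1/\mu(u^*,u^*)$ in front of the squared density. Your sketch works throughout in an $f^*h$-orthonormal basis and appeals to a heuristic of ``dividing by the ambient scale''; that does not compute $\kappa(f_{\epsilon})$, and without this explicit change of basis the constant in the inequality is not justified.
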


\begin{proof}
 of Lemma~\ref{lemme_Jdensity1}. Let $(e_1,\ldots,e_m)$ be a local $f^*h$-orthonormal basis of $TM$ over $U\subset M$ such that $Span(e_1,\ldots,e_{m-1}):=\ker \ell$. We choose the vector field $u$ to be $f^*h$-orthogonal to $\ker \ell$ and such that $\ell(u)=1.$ Observe that $e_m=\pm u/\|df(u)\|_h=\pm u^*.$ Let $\widetilde{e_m}=\frac{e_m}{\sqrt{\mu(e_m,e_m)}}=\pm \frac{u}{\sqrt{\mu(u,u)}}.$ Since
$$f_{\epsilon}^*h=\mu+O(1/N)\mbox{ with } \mu=f^*h+\rho\ell\otimes\ell+O(1/N)$$ 
by Section~\ref{Nash-Kuiper-historique}, we deduce that $(e_1,\ldots,e_{m-1},\widetilde{e_m})$ is $\mu$-orthogonal and thus approximatively  $f_{\epsilon}^*h$-orthonormal. In particular
\begin{eqnarray*}
\kappa(f_{\epsilon})^2 & =  & \displaystyle |vol_{W}	(df_{\epsilon}(e_1),\ldots,df_{\epsilon}(\widetilde e_m),Jdf_{\epsilon}(e_1),\ldots,Jdf_{\epsilon}(\widetilde e_m))|+O(1/N)\\
& =  & \displaystyle \frac{1}{\mu(e_m,e_m)}\displaystyle |vol_{W}	(df_{\epsilon}(e_1),\ldots,df_{\epsilon}(e_m),Jdf_{\epsilon}(e_1),\ldots,Jdf_{\epsilon}(e_m))|+O(1/N)
\end{eqnarray*}
The volume form $vol_W:\Lambda^{2m}(TW)\rightarrow \R$ is a $C^{\infty}$ function and from
$$d_{TW}(df_{\epsilon}(u),\gamma(\cdot,N\pi))=O(1/N)\;\;\mbox{ and }\;\;d_{TW }(df_{\epsilon}(e_j),df(e_j))=O(1/N) $$
for $j\in\{1,\ldots,m-1\},$ we deduce
\begin{eqnarray*}
\begin{array}{lll}
\kappa(f_{\epsilon})^2 & = & \displaystyle \frac{1}{\mu(u,u)}|vol_{W}	(df(e_1),\ldots,df(e_{m-1}),\gamma,Jdf(e_1),\ldots,Jdf(e_{m-1}),J\gamma)|\\
& & \hspace*{15mm}+O(1/N).
\end{array}
\end{eqnarray*}
Here $\gamma$ stands for $\gamma(\cdot,N\pi).$ Recalling that  
$$\gamma(\cdot,Nt)=r\cos(\alpha\cos(2\pi N t))\textbf{t} + r \sin(\alpha \cos(2\pi Nt))J\textbf{t} + [df(u)]^P$$
and replacing in the above expression we obtain
$$\kappa(f_{\epsilon})^2 = \displaystyle \frac{r^2}{\mu(u,u)}|vol_{W}	(df(e_1),\ldots,df(e_{m-1}),\textbf{t},Jdf(e_1),\ldots,Jdf(e_{m-1}),J\textbf{t})|
+O(1/N)$$
As
$$\textbf{t}=\frac{[df(u)]^{P^{\perp}}}{\| [df(u)]^{P^{\perp}}\|_h} = \pm \frac{[df(e_m)]^{P^{\perp}}}{\| [df(e_m)]^{P^{\perp}}\|_h}$$
we have
$$\kappa(f_{\epsilon})^2 = \frac{\kappa(f)^2}{\mu(u,u)} \frac{r^2}{\|[df(e_m)]^{P^{\perp}} \|_h^2} + O(1/N) = \frac{\kappa(f)^2}{\mu(e_m,e_m)} \frac{r^2}{\| [df(u)]^{\perp} \|_h^2}+O(1/N).$$
We then observe that
$$r^2=\mu(u,u)-\|[df(u)]^P\|_h^2=\|df(u)\|_h^2+\rho-\|[df(u)]^P\|_h^2 = \rho + \| [df(u)^{P^{\perp}}] \|_h^2$$
to obtain
$$\kappa(f_{\epsilon})\geq\frac{1}{\sqrt{\mu(e_m,e_m)}}\kappa(f)+O(1/N).$$
\end{proof}

\noi
In the sequel, it is invariably decided to choose the vector field $u$ to be $f^*h$-orthogonal to $\ker d\pi= \ker \ell$. To plug Lemma~\ref{lemme_Jdensity1} in the body of the work, we need to rephrase it. This is the purpose of the next lemma.

\begin{lemme}\label{lemme_Jdensity2} We have
$$\kappa(f_{k,i+1})\geq\frac{1}{1+(\delta_{k+1}-\delta_k)\|\Delta\|_{f_0^*h}}\kappa(f_{k,i})+O(1/N_{k,i})$$
where $\displaystyle\|\Delta\|_{f_0^*h}=\sup_{\{v_x\in TM\; |\; \|v\|_{f_0^*h}=1\}}\Delta(v_x,v_x).$
\end{lemme}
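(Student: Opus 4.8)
The plan is to deduce Lemma~\ref{lemme_Jdensity2} from Lemma~\ref{lemme_Jdensity1} by unwinding the definitions of the metrics $\mu$ and of the short embeddings $f_{k,i}$ appearing in the Nash--Kuiper construction of the \textbf{Second part} above. Recall that in the fundamental step producing $f_{k,i+1}$ from $f_{k,i}$ one takes $\mu = f_{k,i}^*h + \rho\,d\pi\otimes d\pi$ with $\rho = \rho_{a,j}\geq 0$ one of the coefficients in the decomposition $\psi_a(g_{k+1}-f_{k}^*h)=\sum_j\rho_{a,j}\ell_{a,j}\otimes\ell_{a,j}$, and $u$ is chosen (as stipulated just before the lemma) to be $f_{k,i}^*h$-orthogonal to $\ker d\pi$ with $d\pi(u)=1$. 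Applying Lemma~\ref{lemme_Jdensity1} to $f=f_{k,i}$ and $N=N_{k,i}$ gives immediately
\begin{eqnarray*}
\kappa(f_{k,i+1})\geq\frac{1}{\sqrt{\mu(u^*,u^*)}}\,\kappa(f_{k,i})+O(1/N_{k,i}),
\end{eqnarray*}
so everything reduces to bounding $\mu(u^*,u^*)$ from above by $\bigl(1+(\delta_{k+1}-\delta_k)\|\Delta\|_{f_0^*h}\bigr)^2$, where $u^*=u/\|u\|_{f_{k,i}^*h}$.

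Next I would compute $\mu(u^*,u^*)$ directly. Since $\rho\geq 0$ and $d\pi(u^*)=1/\|u\|_{f_{k,i}^*h}$, we have
\begin{eqnarray*}
\mu(u^*,u^*)=\|u^*\|_{f_{k,i}^*h}^2+\rho\,d\pi(u^*)^2 = 1 + \frac{\rho}{\|u\|_{f_{k,i}^*h}^2}.
\end{eqnarray*}
Now one needs the pointwise estimate $\rho/\|u\|_{f_{k,i}^*h}^2\leq (\delta_{k+1}-\delta_k)\|\Delta\|_{f_0^*h}$ (up to the crude bound $\sqrt{1+x}\leq 1+x$ this then yields the stated inequality, or one keeps $\sqrt{\,\cdot\,}$ and observes $1/\sqrt{1+x}\geq 1/(1+x)$ for $x\geq 0$, which is exactly the direction needed). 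To get this, recall that the increment $\rho\,\ell\otimes\ell$ is one summand of $\psi_a(g_{k+1}-f_k^*h)$, that the $\psi_a$ form a partition of unity, and that $g_{k+1}-g_k=(\delta_{k+1}-\delta_k)\Delta$ while the partial sums of the increments performed between $f_k$ and $f_{k+1}$ build up exactly $g_{k+1}-f_k^*h$ with $f_k^*h\leq g_{k+1}$. Hence at each intermediate stage $\rho\,\ell\otimes\ell\leq \psi_a(g_{k+1}-f_k^*h)\leq g_{k+1}-f_k^*h\leq g_{k+1}-g_k=(\delta_{k+1}-\delta_k)\Delta$ as bilinear forms (using $f_k^*h\geq g_k$ up to the controlled error, or more safely that $f_k$ is short for $g_k$ so $f_k^*h\leq g_k$, giving $g_{k+1}-f_k^*h\geq g_{k+1}-g_k$ — the inequality I actually want is the \emph{upper} bound $\rho\,\ell\otimes\ell\leq g_{k+1}-g_k$, which holds because $f_{k,i}^*h\geq g_k$ approximately and the remaining increments to be added are nonnegative). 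Evaluating on $u$ and dividing by $\|u\|_{f_{k,i}^*h}^2$ gives $\rho\,\ell(u)^2/\|u\|_{f_{k,i}^*h}^2 = \rho/\|u\|_{f_{k,i}^*h}^2 \leq (\delta_{k+1}-\delta_k)\,\Delta(u,u)/\|u\|_{f_{k,i}^*h}^2$, and finally $\Delta(u,u)/\|u\|_{f_{k,i}^*h}^2\leq\|\Delta\|_{f_0^*h}$ because $\|u\|_{f_{k,i}^*h}\geq\|u\|_{f_0^*h}$ (each $f_{k,i}^*h$ dominates $f_0^*h$ along the construction) so that $u$ normalized in the $f_{k,i}^*h$-metric has $f_0^*h$-norm at most $1$.

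The main obstacle is bookkeeping rather than analysis: one must be careful that the comparisons $f_k^*h\leq g_{k+1}$, $g_k\leq g_{k+1}$, and $f_0^*h\leq f_{k,i}^*h$ are used in the right directions, and that the partition-of-unity weights and the remaining-to-be-added increments really do yield the clean bound $\rho\,d\pi\otimes d\pi\leq (\delta_{k+1}-\delta_k)\Delta$ at \emph{every} intermediate index $i$, not just at the endpoints $i=0$ and $i=I(k)$. Once this monotonicity chain is set up, the lemma follows by assembling the two displayed inequalities and absorbing the passage from $1/\sqrt{1+x}$ to $1/(1+x)$, with the $O(1/N_{k,i})$ term carried through unchanged from Lemma~\ref{lemme_Jdensity1}.
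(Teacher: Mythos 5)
Your proposal is correct and follows essentially the same route as the paper's proof: bound $\mu_{k,i}(u^*,u^*)-1$ by the increment $g_{k+1}-g_k=(\delta_{k+1}-\delta_k)\Delta$ evaluated at $u^*$, use $f_{k,i}^*h\geq f_0^*h$ to get $\Delta(u^*,u^*)\leq\|\Delta\|_{f_0^*h}$, and conclude from Lemma~\ref{lemme_Jdensity1} together with $1/\sqrt{1+x}\geq 1/(1+x)$. The bookkeeping subtlety you flag (that $f_{k}^*h\geq g_k$ only approximately) is glossed over in the paper's proof as well, so your argument matches it in both substance and level of rigor.
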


\begin{proof}
 of Lemma~\ref{lemme_Jdensity2}. Let 
$\mu_{k,i}:=f_{k,i}^*h+\rho_{k,i}\ell_{k,i}\otimes\ell_{k,i}.$ We have
$$\begin{array}{lll}
0\leq\mu_{k,i}(u_{k,i}^*,u_{k,i}^*)-1 & = & \mu_{k,i}(u_{k,i}^*,u_{k,i}^*)-f_{k,i}^*h(u_{k,i}^*,u_{k,i}^*)\\
               & \leq & g_{k+1}(u_{k,i}^*,u_{k,i}^*)-g_k(u_{k,i}^*,u_{k,i}^*)\\
               & \leq & (\delta_{k+1}-\delta_k)\Delta(u_{k,i}^*,u_{k,i}^*)
\end{array}$$
We observe that $1=\|u_{k,i}^*\|_{f_{k,i}^*h}\geq \|u_{k,i}^*\|_{f_{0}^*h}$ because $f_{k,i}^*h\geq f_{0}^*h$, thus
$$\Delta(u_{k,i}^*,u_{k,i}^*)\leq  \|\Delta\|_{f_0^*h}$$
and
$$\mu_{k,i}(u_{k,i}^*,u_{k,i}^*)\leq 1+(\delta_{k+1}-\delta_k)\|\Delta\|_{f_0^*h}.$$
Lemma~\ref{lemme_Jdensity2} is now a straightforward consequence of Lemma~\ref{lemme_Jdensity1}.
\end{proof}

\noi
 So, if the corrugation numbers $(N_{k,i})$ and the sequence $(\delta_k)_k$ are conveniently chosen, we can insure that $\kappa(f_{\infty})\geq C\kappa(f_0)$ for some $0<C<1$ which shows that $f_{\infty}$ is totally real.

\subsection{Gauss and Maslov maps}\label{subsection_gauss_maslov}

\noi
\textbf{Totally real Grassmannian and Maslov map.--}
We denote by $TR(m)$ the Grassmannian of totally real $m$ planes of $\C^m$. Given a totally real $m$-plane $\Pi_0$ of $\C^m$, this Grassmannian is identified with the homogeneous space $GL(m,\C)/GL(m,\R)$ via the map $i_{\Pi_0}:\Pi\mapsto [L]$ where $L$ is any $\C$-linear map such that $L(\Pi_0)=\Pi$. This homogeneous space admits a fibration $\phi:GL(m,\C)/GL(m,\R)\rightarrow \Sph^1$ given by 
$$[L]\longmapsto \frac{\det^2L}{|\det^2L|}.$$
Let $p:TR(W)\rightarrow W$ be the totally real Grassman bundle of $(W,J)$ (where $\dim_{\R} W =2m$). Any choice of a totally real $m$-plane $\Pi_0(y)\subset T_yW$ induces an identification between the fiber $p^{-1}(y)$ and $GL(m,\C)/GL(m,\R).$ Thus any local section $\Pi_0:V\subset W\rightarrow TR(W)$ allows to define a map $\phi\circ i_{\Pi_0}:p^{-1}(V)\rightarrow \Sph^1$. Given such a section $\Pi_0$, the Gauss map 
$$G_f:M\longrightarrow TR(W),\;\;\; x\longmapsto (f(x),df(T_xM))$$
of any totally real embedding $f: M\to (W,J)$  such that $f(M)\subset V$ induces a map $\mathfrak{m}(\Pi_0,f):=\phi\circ i_{\Pi_0}\circ G_f:M\rightarrow \Sph^1$ that we call the {\it Maslov map}. Observe that a local section $\Pi_0$ can be constructed from a totally real embedding $f_0: M\to (W,J)$: $V$ is a tubular neighborhood of $f_0(M)$ and $\Pi_0$ is any extension of $f_0(x)\mapsto df_0(T_xM)$. In that case, if $f(M)\subset V$, we denote by $\mathfrak{m}(f_0,f)$ the corresponding Maslov map.\\

\noi
\textbf{Maslov map of $f_{\infty}$}.-- In the above proof, every map $f_{k,j}$ as well as $f_{\infty}$ have images lying inside an $\epsilon$-tubular neighborhood $V(\epsilon)$ of $f_0(M)$. If $\epsilon$ is small enough, this neighborhood retracts by deformation on $f_0(M)$ and it can be used to construct a local section $\Pi_0$ extending the one induced by $f_0$. In that case, for every $(k,j)$ we write
$$x\mapsto\mathfrak{m}(f_0, f_{k,j})(x) = e^{2i\vartheta_{k,j}(x)}\mathfrak{m}(f_0, f_{k,j-1})(x)$$
where $\vartheta_{k,j}$ is some angle function. If $k$ is large enough we choose this angle function such that $\vartheta_{k,j}(x)\in\;]-\frac{\pi}{2},\frac{\pi}{2}[$ for all $x\in M$ (this is always possible by the convergence of $(f_{k,j})$). We define inductively a sequence of maps $\mathcal{W}_{k} : M\rightarrow\R$ by $\mathcal{W}_0=0$, $\mathcal{W}_{k+1}:=\mathcal{W}_{k}+2\vartheta_{k}$ with $\vartheta_k := \sum_{j\in I(k)}\vartheta_{k,j}$. Since the $f_{k}$'s are $C^1$ converging toward $f_{\infty}$, the maps $\mathcal{W}_{k}$ also $C^0$ converge toward a map $\mathcal{W}_{\infty}$ such that 
$$\mathfrak{m}(f_0,f_{\infty}) = e^{i\mathcal{W}_{\infty}}.$$

\noi
The following proposition matches with Proposition~\ref{prop_maslov_intro} for $\ell=(k,j)$.

\begin{prop}
\label{prop_maslov}
Let  $\mathfrak{m}(f_0,f_{\infty})=e^{i\mathcal{W}_{\infty}}:M\rightarrow\Sph^1$ be the Maslov map of $f_{\infty}$ and $\mathcal{W}_{\infty}=2\sum_k\vartheta_k$ be the Maslov argument defined above. Then if $k$ is large enough
$$\vartheta_{k}=\theta_{k}+\sum_{j\in I(k)} O\left( \frac{1}{N_{k,j}} \right)
\mbox{ where  }
\theta_{k}:=\sum_{j\in I(k)}\alpha_{k,j}\cos(2\pi N_{k,j}\pi_{k,j})$$
(if $x\in M$ is not in the domain of $\pi_{k,j}$ it is understood that the corresponding term is zero)
\end{prop}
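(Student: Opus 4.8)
The plan is to follow the Maslov argument through the construction one corrugation at a time. Recall that $\vartheta_k=\sum_{j\in I(k)}\vartheta_{k,j}$ and that each $f_{k,j}$ is obtained from $f_{k,j-1}$ by a single Corrugation Process $\CP(f_{k,j-1},\pi_{k,j},N_{k,j})$ in a direction $u$ dual to $\ell_{k,j}:=d\pi_{k,j}$, the two maps agreeing (so that $\vartheta_{k,j}\equiv 0$) outside the chart domain. Hence it suffices to prove that, for $k$ large and on the chart domain,
$$\vartheta_{k,j}=\alpha_{k,j}\cos(2\pi N_{k,j}\pi_{k,j})+O(1/N_{k,j}),$$
and then to sum over $j\in I(k)$.

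First I would turn the increment into a determinant. Fix $x$, write $f=f_{k,j-1}$, $f'=f_{k,j}$, $\Pi_x=df(T_xM)$, $\Pi'_x=df'(T_xM)$, and choose a $J$-linear trivialisation of $TW$ over a small neighbourhood of $f_0(x)$. In this trivialisation the Gauss planes and the reference plane $\Pi_0$ become planes in $\C^m$, and, unwinding the definition of $\phi\circ i_{\Pi_0}$, the quotient $\mathfrak{m}(f_0,f')(x)/\mathfrak{m}(f_0,f)(x)$ equals $\det_{\C}^2 A_x/|\det_{\C}^2 A_x|$, where $A_x$ is \emph{any} $\C$-linear automorphism of $\C^m$ carrying $\Pi_x$ onto $\Pi'_x$: the $GL(m,\R)$-ambiguity in $A_x$ rescales $\det_{\C}A_x$ only by a real factor, which $\det^2/|\det^2|$ discards. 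The displacement of the base point ($d_W(f'(x),f(x))=O(1/N)$ by property $(P_1)$) and of the reference plane $\Pi_0$ between $f(x)$ and $f'(x)$ contributes only a factor $1+O(1/N)$, uniformly in $x$ by compactness of $M$ and finiteness of the atlas. Thus $e^{2i\vartheta_{k,j}(x)}=\det_{\C}^2 A_x/|\det_{\C}^2 A_x|+O(1/N_{k,j})$.

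Next I would compute $A_x$ from the explicit corrugation. Let $(e_1,\dots,e_{m-1})$ be a frame of $\ker\ell_{k,j}$ and recall from the proof of Theorem~\ref{th2} in Section~\ref{subsection:proofthm2} that $P:=df(\ker\ell_{k,j})+Jdf(\ker\ell_{k,j})$ is a $J$-invariant $(2m-2)$-plane, that $\textbf{t}$ is a unit vector spanning $[df(u)]^{P^{\perp}}$, that $\textbf{n}=J\textbf{t}$, and that
$$\gamma(\cdot,N\pi)=r\cos(\alpha\cos 2\pi N\pi)\,\textbf{t}+r\sin(\alpha\cos 2\pi N\pi)\,\textbf{n}+[df(u)]^{P}.$$
Put $\beta:=\alpha_{k,j}\cos(2\pi N_{k,j}\pi_{k,j})$ and $r_0:=\|[df(u)]^{P^{\perp}}\|$, so that $df(u)=r_0\textbf{t}+[df(u)]^{P}$ and, under the identification $J\leftrightarrow i$ on the complex line $\C\textbf{t}$, $\gamma(\cdot,N\pi)=re^{i\beta}\textbf{t}+[df(u)]^{P}$. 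By property $(P_2)$ we have $df'(e_i)=df(e_i)+O(1/N)$ for $i<m$ and by $(P_3')$ we have $df'(u)=\gamma(\cdot,N\pi)+O(1/N)$, so $\Pi'_x$ is $O(1/N)$-close to the idealised plane $\mathrm{span}\bigl(df(e_1),\dots,df(e_{m-1}),\gamma(\cdot,N\pi)\bigr)$. Using the splitting $T_{f(x)}W=P\oplus_{\C}\C\textbf{t}$, I would take $A_x$ to be the identity on $P$ and multiplication by $\tfrac{r}{r_0}e^{i\beta}$ on $\C\textbf{t}$: then $A_x$ fixes each $df(e_i)\in P$, sends $df(u)$ to $re^{i\beta}\textbf{t}+[df(u)]^{P}=\gamma(\cdot,N\pi)$, hence carries $\Pi_x$ onto the idealised plane. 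Consequently $\det_{\C}A_x=\tfrac{r}{r_0}e^{i\beta}$, so $\det_{\C}^2 A_x/|\det_{\C}^2 A_x|=e^{2i\beta}$, and therefore $e^{2i\vartheta_{k,j}(x)}=e^{2i\beta}+O(1/N_{k,j})$.

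Finally, I would fix the branch and sum. Since the per-step correction $\rho_{k,j}$ tends to $0$ along the Nash--Kuiper construction, $r_0/r=\sqrt{1-\rho_{k,j}/r^{2}}\to 1$ and $\alpha_{k,j}=J_0^{-1}(r_0/r)\to 0$; for $k$ large this forces $|\beta|<\tfrac{\pi}{2}$, and the normalisation $\vartheta_{k,j}\in\,]-\tfrac{\pi}{2},\tfrac{\pi}{2}[$ then gives $\vartheta_{k,j}=\beta+O(1/N_{k,j})=\alpha_{k,j}\cos(2\pi N_{k,j}\pi_{k,j})+O(1/N_{k,j})$. Summing over $j\in I(k)$ yields $\vartheta_k=\theta_k+\sum_{j\in I(k)}O(1/N_{k,j})$. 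The main obstacle is the second step: making the identity ``increment $=\det_{\C}^2$-ratio'' rigorous while the base point and the reference $m$-plane $\Pi_0$ both move, and checking that the resulting $O(1/N)$ errors — from $(P_1)$, from $(P_2)$ and $(P_3')$, and from the drift of $\Pi_0$ — are uniform over $M$, so that they genuinely combine into the stated $\sum_{j\in I(k)}O(1/N_{k,j})$; once that bookkeeping is set up, the determinant computation itself is immediate.
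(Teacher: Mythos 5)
Your proposal is correct and follows essentially the same route as the paper: it reduces the statement to a one-corrugation increment and identifies that increment, via properties $(P_2)$–$(P_3')$ and the explicit loop $\gamma=re^{i\alpha\cos(2\pi N\pi)}\mathbf{t}+[df(u)]^P$, with the normalized square complex determinant of the map that is the identity on $P$ and multiplication by $\tfrac{r}{r_0}e^{i\beta}$ on $\C\mathbf{t}$ — exactly the factor $\frac{r^2}{\|[df(u)]^{P^\perp}\|^2}e^{2i\alpha\cos(2\pi N\pi)}$ appearing in the paper's Lemma~\ref{lemme_Maslov_angle}. The only difference is presentational (a transition automorphism $A_x$ versus the paper's direct computation of $z(f_\epsilon)/z(f)$ in a frame of $\Pi_0$), and your handling of the base-point/$\Pi_0$ drift and of the branch choice matches the paper's implicit use of smoothness and of the normalization $\vartheta_{k,j}\in\,]-\frac{\pi}{2},\frac{\pi}{2}[$.
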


\noi
The proof of this proposition is a straightforward consequence of the following lemma.

\begin{lemme}\label{lemme_Maslov_angle} Let $f:U\rightarrow W$ be a totally real map. Let $\pi :U\rightarrow \R$ be a submersion, $u:U\rightarrow TU$ a vector field chosen to be $f^*h$-orthogonal to $\ker d\pi$ on every point $x\in U$ and such that $d\pi(u)=1.$ Then, for all $x\in U$:
$$\mathfrak{m}(\Pi_0,f_{\epsilon})(x)=e^{2i(\alpha(x)\cos(2\pi N \pi(x))+O(1/N))}\mathfrak{m}(\Pi_0,f)(x)$$
where $f_{\epsilon}=CP_{\hgamma}(f,\pi, N).$
\end{lemme}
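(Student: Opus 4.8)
The strategy is to compute directly how the Gauss map $G_{f_\epsilon}$ differs from $G_f$, then push this difference through the fibration $\phi\circ i_{\Pi_0}$. First I would recall from Section~\ref{Nash-Kuiper-historique} and Proposition~\ref{propCP} the precise form of the differential of $f_\epsilon = CP_{\hgamma}(f,\pi,N)$: for $v\in\ker d\pi_x$ one has $df_\epsilon(v)=df(v)+O(1/N)$, while for the chosen vector field $u$ (orthogonal to $\ker d\pi$, with $d\pi(u)=1$) property $(P_3')$ gives $df_\epsilon(u)=\gamma(x,N\pi(x))+O(1/N)$ with
\begin{eqnarray*}
\gamma(x,N\pi(x))=r\cos(\alpha(x)\cos(2\pi N\pi(x)))\,\mathbf{t}(x)+r\sin(\alpha(x)\cos(2\pi N\pi(x)))\,\mathbf{n}(x)+[df(u)]^P.
\end{eqnarray*}
So the tangent plane $df_\epsilon(T_xM)$ is obtained, up to $O(1/N)$, from $df(T_xM)$ by rotating the $u$-direction by the angle $\alpha(x)\cos(2\pi N\pi(x))$ in the $J$-invariant frame spanned by $\mathbf{t}(x)$ and $\mathbf{n}(x)=J\mathbf{t}(x)$, while fixing the complementary directions in $\ker d\pi$.

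Next I would express this rotation as the action of an explicit $\C$-linear map. Write $\Pi:=df(T_xM)$ and choose a reference complex-linear identification: since $\mathbf{t}$ and $\mathbf{n}=J\mathbf{t}$ span a complex line $\C\mathbf{t}$ transverse to $P$ inside $T_{f(x)}W$, the map which is the identity on $P$ and multiplication by $e^{i\beta}$ on $\C\mathbf{t}$, with $\beta=\alpha(x)\cos(2\pi N\pi(x))$, is $\C$-linear and sends (a $\C$-linear extension of) $\Pi$ to $df_\epsilon(T_xM)+O(1/N)$. More precisely, if $L_\epsilon$ is any $\C$-linear map with $L_\epsilon(\Pi_0(f(x)))=\Pi$ representing $i_{\Pi_0}(G_f(x))$, then $i_{\Pi_0}(G_{f_\epsilon}(x))$ is represented by $R_\beta\circ L_\epsilon+O(1/N)$ where $R_\beta$ is the above rotation. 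Then I compute $\det^2$: since $R_\beta$ acts as the identity on the $(m-1)$-dimensional complex subspace $P$ and as $e^{i\beta}$ on one complex line, $\det_\C R_\beta=e^{i\beta}$, hence $\det_\C^2(R_\beta\circ L_\epsilon)=e^{2i\beta}\det_\C^2 L_\epsilon$. Applying $\phi$ (which normalizes) yields
\[
\mathfrak{m}(\Pi_0,f_\epsilon)(x)=e^{2i\beta}\,\mathfrak{m}(\Pi_0,f)(x)+O(1/N)=e^{2i(\alpha(x)\cos(2\pi N\pi(x))+O(1/N))}\,\mathfrak{m}(\Pi_0,f)(x),
\]
absorbing the $O(1/N)$ error into the argument (legitimate once $N$ is large, since $\phi$ is smooth and the values stay in a fixed compact set away from $0$).

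The main obstacle is the bookkeeping in the second step: making rigorous the claim that, up to $O(1/N)$, the representative $\C$-linear map for $G_{f_\epsilon}$ is exactly $R_\beta$ composed with that of $G_f$, rather than some more complicated perturbation. The subtlety is that $df_\epsilon$ agrees with $df$ on $\ker d\pi$ only up to $O(1/N)$ (not exactly), and $[df(u)]^P$ sits in $P$ but need not be aligned with the frame in a way that decouples cleanly; one must check that these $O(1/N)$ discrepancies, after choosing $L_\epsilon$ continuously and extending $\C$-linearly, perturb $\det_\C^2 L_\epsilon$ only by $O(1/N)$ — which follows from continuity of $\det$ and the fact (used throughout Section~\ref{sectionAppl2}) that $f_\epsilon$ stays totally real with $J$-density bounded below, so the relevant determinants stay bounded away from zero. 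Once that is in place, the identity $\det_\C R_\beta=e^{i\beta}$ is elementary and the conclusion follows. Finally, Proposition~\ref{prop_maslov} is obtained by applying this lemma successively to each elementary corrugation $f_{k,j-1}\rightsquigarrow f_{k,j}=CP_{\hgamma}(f_{k,j-1},\pi_{k,j},N_{k,j})$ with amplitude $\alpha_{k,j}$ and summing the resulting angle increments over $j\in I(k)$.
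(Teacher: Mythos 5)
Your overall route is in substance the paper's: use $(P_2)$ and $(P_3')$ to replace the frame $df_\epsilon(e_1),\dots,df_\epsilon(e_{m-1}),df_\epsilon(u)$ by $df(e_1),\dots,df(e_{m-1}),\gamma(\cdot,N\pi)$ up to $O(1/N)$, and read the Maslov increment off the argument of a squared complex determinant. But your key intermediate claim is false as stated: the map $R_\beta$ that is the identity on $P$ and multiplication by $e^{i\beta}$ on $\C\mathbf{t}$ does \emph{not} send $df(T_xM)$ to $df_\epsilon(T_xM)+O(1/N)$. Indeed $df_\epsilon(u)=\gamma(x,N\pi(x))+O(1/N)$ has $\C\mathbf{t}$-component $r\,e^{i\beta}\mathbf{t}$, whereas the $\C\mathbf{t}$-component of $df(u)$ is $[df(u)]^{P^\perp}=r J_0(\alpha)\,\mathbf{t}$; hence $R_\beta(df(u))-\gamma=(rJ_0(\alpha)-r)e^{i\beta}\mathbf{t}$, an error of size $O(1)$ (bounded away from $0$ wherever $\rho>0$, i.e.\ wherever the corrugation actually does something), not $O(1/N)$. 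Consequently $R_\beta\circ L_\epsilon$ is not, up to $O(1/N)$, a representative of $i_{\Pi_0}(G_{f_\epsilon}(x))$, and the ``main obstacle'' paragraph, which attributes all discrepancies to $O(1/N)$ terms, does not repair this: the Corrugation Process does not merely rotate the tangent plane in the $\C\mathbf{t}$-line, it rotates \emph{and stretches} it in the $u$-direction --- that stretching is exactly how the metric is increased.

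The gap is easy to fill, and doing so recovers the paper's computation. Replace $R_\beta$ by the $\C$-linear map $T$ equal to the identity on $P$ and to multiplication by $\frac{r}{\|[df(u)]^{P^\perp}\|}e^{i\beta}=J_0(\alpha)^{-1}e^{i\beta}$ on $\C\mathbf{t}$: then $T(df(e_j))=df(e_j)$ and $T(df(u))=[df(u)]^P+re^{i\beta}\mathbf{t}=\gamma$, so $T$ does carry $df(T_xM)$ onto $df_\epsilon(T_xM)+O(1/N)$, and $\det_\C T=J_0(\alpha)^{-1}e^{i\beta}$ still has argument $\beta$; the extra positive real factor is killed by the normalization $\phi$. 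The paper's proof tracks this factor explicitly rather than through a Grassmannian representative: setting $z(f)=\det_\C^2(df(e_1),\ldots,df(e_m))$ it obtains $z(f_\epsilon)=\frac{r^2}{\|[df(u)]^{P^\perp}\|^2}\,e^{2i\alpha\cos(2\pi N\pi)}z(f)+O(1/N)$ and concludes by taking arguments, using (as you correctly note at the end) that $|z(f)|$ stays bounded away from zero so the additive $O(1/N)$ can be absorbed into the exponent. With this correction your argument is sound and coincides with the paper's; the final reduction of Proposition~\ref{prop_maslov} to the lemma is as you describe.
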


\begin{proof} of the lemma. Let $(e_1,\ldots,e_m)$ be a local basis of $TM$ over $U\subset M$ such that $Span(e_1,\ldots,e_{m-1}):=\ker d\pi$ and $e_m=u$. Let $(\epsilon_1,\ldots,\epsilon_m)$ be a local basis of the $m$-plane field $\Pi_0$. We consider the square of the determinant map $z(f):U\rightarrow\C$:
$$z(f):=\mbox{det}_{\C}^2(df(e_1),\ldots,df(e_m))$$
where $df(e_1),\ldots,df(e_m)$ are seen as complex vectors with complex coordinates relative to the basis $(\epsilon_1,\ldots,\epsilon_m)$. 
Since $df(T_xM)$ is totally real for every $x$, the complex number $z(f)(x)$ never vanishes. It is readily seen that its argument is $\mathfrak{m}(\Pi_0,f)(x).$ Similarly, the argument of
$$z(f_{\epsilon})=\mbox{det}_{\C}^2(df_{\epsilon}(e_1),\ldots,df_{\epsilon}(e_m))$$
is $\mathfrak{m}(\Pi_0,f_{\epsilon})$. From the smoothness of $\det_{\C}^2$ and from
$$d_{TW}(df_{\epsilon}(u),\gamma(\cdot,N\pi))=O(1/N)\;\;\mbox{ and }\;\;d_{TW }(df_{\epsilon}(e_j),df(e_j))=O(1/N) $$
for $j\in\{1,\ldots,m-1\},$ we deduce
$$z(f_{\epsilon}) = \mbox{det}_{\C}^2(df(e_1),\ldots,df(e_{m-1}),\gamma(\cdot,N\pi))+O(1/N).$$
As
$$\gamma(\cdot,t)=r\, e^{i\alpha\cos(2\pi N t)}\textbf{t} + [df(u)]^P \,\, \mbox{ and } \,\, \textbf{t}= \frac{[df(u)]^{P^{\perp}}}{\|[df(u)]^{P^{\perp}}\|}$$
we have 
$$z(f_{\epsilon})=\frac{r^2}{ \|[df(u)]^{P^{\perp}}\|^2}e^{2i\alpha\cos(2\pi N \pi(\cdot))}z(f)+O(1/N).$$
\end{proof}

\subsection{Self similarities and isometric maps}

We put in perspective the result obtained for totally real maps with the ones obtained for the isometric embeddings of the square flat torus and the reduced sphere in \cite{ensaios,BBDLRT}. In these papers, the differential of the isometric embedding is expressed by an infinite product of \emph{corrugations matrices} and a self similarity property ensues from a \emph{Corrugation Theorem} (Theorem 21 in \cite{ensaios}). We put into light in this subsection that we have similar infinite product of rotations and self similarities behavior.\\

\noi
\textbf{Corrugation matrices.--} We assume $n=m+1$. Let $U\simeq [0,1]^m$ be a chart of a $m$-dimensional oriented manifold  and let $f_{k,j}:U\to \E^{m+1}$ be a sequence obtained iteratively by the Nash process (see Subsection \ref{subsection:proofthm2}). 
We  recall the definition of the corrugation matrices.  We denote by $\ell_{k,j}$ the successive linear forms. We are going to build two basis of $\E^{m+1}$ to express the corrugation matrices.
We denote by $u_{k,j+1}$ any  vector field  such that $\ell_{k,j+1}(u_{k,j+1})=1$ and by $\t_{k,j}$ the normalisation of $df_{k,j}(u_{k,j+1})$. Let $(V_{k,j+1}^1,\cdots,V_{k,j+1}^{m-1})$ be any local basis of $\mathrm{Ker}\ \ell_{k,j+1}$. We put 
$$
\n_{k,j}:= \frac{\t_{k,j} \wedge df_{k,j}(V_{k,j+1}^1)\wedge \cdots \wedge df_{k,j}(V_{k,j+1}^{m-1})}{\| \t_{k,j} \wedge df_{k,j}(V_{k,j+1}^1)\wedge \cdots \wedge df_{k,j}(V_{k,j+1}^{m-1})\|}.
$$
We denote by $v_{k,j}^1,\cdots,v_{k,j}^{m-1}$ the Gram-Schmidt orthonormalisation of $df_{k,j}(V_{k,j}^1)$, $\cdots$ ,$df_{k,j}(V_{k,j}^{m-1})$ and we put
$$
v_{k,j}^\perp:=v_{k,j}^1 \wedge \cdots v_{k,j}^{m-1} \wedge \n_{k,j}.
$$
Let $p\in U$. Observe that $\mathcal{B}_{k,j}(p):=(v_{k,j}^\perp, v_{k,j}^1, \cdots, v_{k,j}^{m-1},\n_{k,j})(p)$ is an orthonormal basis of $\E^{m+1}$. 
We also introduce $m-1$ vectors $v_{k,j}^{1+},\cdots,v_{k,j}^{(m-1)+}$ as the Gram-Schmidt orthonormalisation of $df_{k,j}(V_{k,j+1}^1)$,$\cdots$,$df_{k,j}(V_{k,j+1}^{m-1})$ and we define a second orthonormal basis $\mathcal{B}_{k,j}^+(p):=(\t_{k,j}^\perp, v_{k,j}^{1+}, \cdots, v_{k,j}^{(m-1)+},\n_{k,j})(p)$. 
We denote by $\Rcal_{k,j}(p)$ the rotation matrix that maps $\mathcal{B}_{k,j}(p)$ to $\mathcal{B}_{k,j}^+(p)$ and by $\Lcal_{k,j+1}(p,N_{k,j+1})$ the rotation matrix that maps $\mathcal{B}_{k,j}^+(p)$ to $\mathcal{B}_{k,j+1}(p)$. The \emph{corrugation matrix} is defined by the product 
$$
\mathcal{M}_{k,j+1}(p,N_{k,j+1}):=\Lcal_{k,j+1}(p,N_{k,j+1}) \Rcal_{k,j}(p)
$$ 
and  maps $\mathcal{B}_{k,j}(p)$ to $\mathcal{B}_{k,j+1}(p)$. 

\bigskip

\noi
\textbf{Corrugation theorem.--} Corrugation matrices encode both the differential and the Gauss map of the limit embedding, via the following infinite product:
$$
\left(
\begin{array}{c}
v_{\infty}^\perp\\
v_{\infty}^1\\ 
\vdots\\
v_{\infty}^{m-1}\\
\n_{\infty}
\end{array}
\right)(p)
= \left( \prod_{k,j}\mathcal{M}_{k,j+1}(p,N_{k,j+1}) \right)
\left(
\begin{array}{c}
v_{0}^\perp\\
v_{0}^1\\ 
\vdots\\
v_{0}^{m-1}\\
\n_{0}
\end{array}
\right)(p)
$$
In their construction of an isometric embedding of the flat torus, the authors succeed in reducing the number of the direction of corrugation to three. As a consequence, they could prove that the matrices $\Rcal_{k,j}$ converge toward three constant matrices (depending on the value of $j$) when $k$ goes to infinity. Regarding the matrices $\Lcal_{k,j}$ they show that they are equal to a simple rotation of angle $\alpha_{k,j+1}(p)\cos(2\pi N_{k,j+1}\pi_{k,j+1}(p))$ modulo $O(1/N_{k,j+1})$. Therefore, up to the three constant matrices, the infinite product of the $\Mcal_{k,j}$ is similar to a product of rotations whose angles oscillate with increasing frequencies. Here, we observe that if the property on $\Rcal_{k,j}$ is specific to their construction, the asymptotic behavior of $\Lcal_{k,j}$ still holds in a general context. 

\begin{prop}\label{prop:fin} Let $f_{k,j}:U\simeq [0,1]^m\to \E^{m+1}$ be a sequence obtained iteratively by the Nash process. If $f_{k,j+1}$ is obtained from $f_{k,j}$ by the corrugation process of Proposition~\ref{kuiper_formula_is}, then for every $p \in U$, we have
$$
\Lcal_{k,j+1}(p,N_{k,j+1}) = 
\left(
\begin{array}{ccccc}
\cos \theta_{k,j+1} & 0   & \sin \theta_{k,j+1}\\
0& Id & 0\\
-\sin \theta_{k,j+1} & 0 & \cos \theta_{k,j+1}
\end{array}
\right)
+ O\left(\frac{1}{N_{k,j+1}}\right),
$$
where $\theta_{k,j+1}:=\alpha_{k,j+1}(p)\cos(2\pi N_{k,j+1}\pi_{k,j+1}(p))$.
\end{prop}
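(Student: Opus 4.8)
The plan is to compute the matrix $\Lcal_{k,j+1}(p,N_{k,j+1})$ entrywise from the two orthonormal frames $\Bcal_{k,j}^{+}(p)$ and $\Bcal_{k,j+1}(p)$, using that $f_{k,j+1}=\CP(f_{k,j},\pi_{k,j+1},N_{k,j+1})$ is produced with the loop family $\gamma$ of~\eqref{loopIs} as in Proposition~\ref{kuiper_formula_is}. Fix $p\in U$ and write $N=N_{k,j+1}$, $\pi=\pi_{k,j+1}$, $\ell=\ell_{k,j+1}=d\pi$, $u=u_{k,j+1}$. Property $(P_2)$ of Proposition~\ref{propCP} gives $df_{k,j+1}(v)=df_{k,j}(v)+O(1/N)$ for every $v\in\ker\ell$, and property $(P_3')$ (the Average Constraint holding by the choice of $\alpha$ in~\eqref{loopIs}) gives $df_{k,j+1}(u)=\gamma(\cdot,N\pi)+O(1/N)$, all $O(1/N)$ being uniform on $U$. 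Inserting the explicit $\gamma$ of~\eqref{loopIs} and observing that its angular argument at $p$ equals $\alpha\cos(2\pi N\pi(p))=\theta_{k,j+1}$, this reads
\begin{eqnarray*}
df_{k,j+1}(u)=r\cos\theta_{k,j+1}\,\t + r\sin\theta_{k,j+1}\,\n + [df_{k,j}(u)]^{P} + O(1/N),
\end{eqnarray*}
where $P:=df_{k,j}(\ker\ell)$, $\t=[df_{k,j}(u)]^{P^{\perp}}/\|[df_{k,j}(u)]^{P^{\perp}}\|$ and $\n$ is a unit normal to $f_{k,j}$ at $p$.

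Next I would identify the vectors of $\Bcal_{k,j}^{+}(p)=(\t_{k,j}^{\perp},v_{k,j}^{1+},\dots,v_{k,j}^{(m-1)+},\n_{k,j})$: the $v_{k,j}^{i+}$ are the Gram--Schmidt orthonormalisation of the $df_{k,j}(V_{k,j+1}^{i})$, hence an orthonormal basis of $P$; $\n_{k,j}$ is a unit normal to $f_{k,j}$, so $\n_{k,j}=\pm\n$; and $\t_{k,j}^{\perp}$, the unit vector of $P^{\perp}\cap\n_{k,j}^{\perp}=\R\t$ deduced from $\t_{k,j}$, equals $\pm\t$. I would then describe $\Bcal_{k,j+1}(p)$ to first order in $1/N$. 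Since $df_{k,j+1}$ and $df_{k,j}$ agree on $\ker\ell$ up to $O(1/N)$, and Gram--Schmidt is smooth on linearly independent tuples while $f_{k,j}$ is an immersion, $v_{k,j+1}^{i}=v_{k,j}^{i+}+O(1/N)$. From the displayed formula, and since $[df_{k,j}(u)]^{P}\in P$ while $\cos\theta_{k,j+1}\,\t+\sin\theta_{k,j+1}\,\n$ is a unit vector of $P^{\perp}$, we get $df_{k,j+1}(T_{x}M)=P\oplus\R(\cos\theta_{k,j+1}\,\t+\sin\theta_{k,j+1}\,\n)+O(1/N)$; hence $\n_{k,j+1}$, a unit normal to $f_{k,j+1}$, satisfies $\n_{k,j+1}=\pm(-\sin\theta_{k,j+1}\,\t+\cos\theta_{k,j+1}\,\n)+O(1/N)$, and $v_{k,j+1}^{\perp}$, the unit vector orthogonal to $\mathrm{Span}(v_{k,j+1}^{1},\dots,v_{k,j+1}^{m-1},\n_{k,j+1})=P\oplus\R\n_{k,j+1}+O(1/N)$, satisfies $v_{k,j+1}^{\perp}=\pm(\cos\theta_{k,j+1}\,\t+\sin\theta_{k,j+1}\,\n)+O(1/N)$.

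Finally I would assemble $\Lcal_{k,j+1}(p,N)$, whose $(a,b)$ entry is $\langle(\Bcal_{k,j+1})_{a},(\Bcal_{k,j}^{+})_{b}\rangle$. With the identifications above these inner products vanish except: the central $(m-1)\times(m-1)$ block is $\mathrm{Id}+O(1/N)$, and the four entries pairing $v^{\perp}$ with $\n$ give $\cos\theta_{k,j+1}$ on the diagonal and $\pm\sin\theta_{k,j+1}$ off the diagonal, up to $O(1/N)$. Geometrically this says that, up to $O(1/N)$, the transition is the orthogonal transformation which is the identity on $P$ and the rotation by $\theta_{k,j+1}$ on $P^{\perp}=\mathrm{Span}(\t,\n)$: writing that plane as $\C$ with $\t\leftrightarrow 1$ and $\n\leftrightarrow i$, one has $r\cos\theta_{k,j+1}\,\t+r\sin\theta_{k,j+1}\,\n=re^{i\theta_{k,j+1}}\t$, so the new tangent direction transverse to $P$ is $e^{i\theta_{k,j+1}}\t$ and the new normal is $ie^{i\theta_{k,j+1}}\t=e^{i\theta_{k,j+1}}\n$; thus $\Lcal_{k,j+1}(p,N)$ is the rotation of the first and last frame vectors by $\theta_{k,j+1}$, which is the announced matrix. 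I expect the only delicate point to be verifying that the signs forced by the (fixed) orientation conventions in the definitions of $\n_{k,j}$, $\t_{k,j}^{\perp}$ and $v_{k,j}^{\perp}$ are mutually consistent, so that $\Bcal_{k,j}^{+}(p)$ and $\Bcal_{k,j+1}(p)$ carry the same orientation and $\Lcal_{k,j+1}(p,N)$ — the unique rotation carrying one to the other — is precisely the matrix of the statement rather than a sign-variant of it; the remaining estimates are the uniform $O(1/N)$ control already packaged in Propositions~\ref{propCP} and~\ref{kuiper_formula_is}.
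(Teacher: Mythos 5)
Your computation is correct and is essentially the argument the paper delegates to Lemma 20 of \cite{ensaios} (the paper's "proof" is only that citation): identify $df_{k,j+1}$ on $\ker\ell_{k,j+1}$ via $(P_2)$ and on $u_{k,j+1}$ via $(P_3')$ with the explicit loop \eqref{loopIs}, whose angular argument at $p$ is $\theta_{k,j+1}=\alpha_{k,j+1}(p)\cos(2\pi N_{k,j+1}\pi_{k,j+1}(p))$, and read off that the transition from $\mathcal{B}_{k,j}^{+}(p)$ to $\mathcal{B}_{k,j+1}(p)$ is, up to $O(1/N_{k,j+1})$, the identity on $P$ and the rotation by $\theta_{k,j+1}$ in the $(\t,\n)$-plane. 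The sign bookkeeping you flag is settled by the wedge-product conventions defining $\n_{k,j}$, $\t_{k,j}^{\perp}$ and $v_{k,j+1}^{\perp}$, which make both frames consistently oriented, and is precisely the routine verification the cited lemma's adaptation carries out.
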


\begin{proof}
The proof is a straightforward adaptation of Lemma 20 from \cite{ensaios}. 
\end{proof}

\noi
\textbf{The totally real case.--} In the totally real case, the above approach can be adapted as follows. Let $V^1$, $\cdots$, $V^m$ be a local basis and $f_{k,j}$ be a totally real map. We put $v_{k,j}^i=df_{k,j}(V^i)$ and observe that $\mathcal{B}_{k,j}=(v_{k,j}^1, \cdots, v_{k,j}^m,Jv_{k,j}^1, \cdots, Jv_{k,j}^m)$ is a basis of $\E^{2m}$. The differential of the limit embedding $f_\infty$ is determined by
$$
\left(
\begin{array}{c}
v_{\infty}^1\\ 
\vdots\\
v_{\infty}^{m}\\
Jv_{\infty}^1\\ 
\vdots\\
Jv_{\infty}^{m}\\
\end{array}
\right)
= \left(\prod_{k,j}\mathcal{M}_{k,j+1}\right)
\left(
\begin{array}{c}
v_{0}^1\\
\vdots\\
v_{0}^{m}\\
Jv_{0}^1\\ 
\vdots\\
Jv_{0}^{m}\\
\end{array}
\right),
$$
where $\Mcal_{k,j+1}$ is the matrix that maps the basis $\mathcal{B}_{k,j}$ to the basis $\mathcal{B}_{k,j+1}$. 
Each $\Mcal_{k,j}$ is a $2m\times 2m$ real matrix which commutes with $J$. We denote by $\mathcal{M}_{k,j}^{\C} \in GL(m,\C)$ its complexification. 
The Maslov map $\mathfrak{m}(f_0,f_\infty)$ is the square of the determinant of the infinite product of the $\mathcal{M}_{k,j}^{\C}$'s divided by its module (so that its image lies in $\mathrm{U}(1)$). Observe that this Maslov map is not affected by the rotations in the tangent spaces encoded by the matrices $\Rcal_{k,j}$. As stated in Proposition~\ref{prop_maslov}, this fact allows to obtain an analogy with a Weierstrass function even if the directions of the corrugations are not under control.

\bibliographystyle{alpha}
\bibliography{bib_art01}

\listoffigures

\end{document}